\newtheorem{proposition}{Proposition}
\newtheorem{theorem}{Theorem}
\newtheorem{lemma}{Lemma}
\newcommand{\Mod}[1]{\, (\mathrm{mod}\, #1)}
\newcommand{\Addresses}{{
  \footnotesize
  \bigskip
  \footnotesize

	\textsc{Mathematical Institute, University of Oxford, Oxford, OX2 6GG, UK}\par\nopagebreak
	\textit{E-mail address:} \texttt{ofir.goro@gmail.com}

	\medskip

	\textsc{Department of Mathematics and Statistics, University of Turku, 20014 Turku, Finland}\par\nopagebreak
	\textit{E-mail address:} \texttt{ksmato@utu.fi}

	\medskip

	\textsc{Department of Mathematics, Caltech, 1200 E California Blvd, Pasadena, CA,91125}\par\nopagebreak
	\textit{E-mail address:} \texttt{maksym.radziwill@gmail.com}

	\medskip

	\textsc{Department of Mathematics and Statistics, Queen's University, Kingston, Ontario, K7L 3N6, Canada}\par\nopagebreak
	\textit{E-mail address:} \texttt{brad.rodgers@queensu.ca}

}}
\begin{document}

\title[Variance of squarefree integers]{On the variance of squarefree integers in short intervals and arithmetic progressions}

\author{Ofir Gorodetsky, Kaisa Matom\"aki, Maksym Radziwi\l\l, Brad Rodgers}

\begin{abstract}
  We evaluate asymptotically the variance of the number of squarefree integers up to $x$ in short intervals of length $H < x^{6/11 - \varepsilon}$ and the variance of the number of squarefree integers up to $x$ in arithmetic progressions modulo $q$ with $q > x^{5/11 + \varepsilon}$. On the assumption of respectively the Lindel\"of Hypothesis and the Generalized Lindel\"of Hypothesis we show that these ranges can be improved to respectively $H < x^{2/3 - \varepsilon}$ and $q > x^{1/3 + \varepsilon}$. Furthermore we show that obtaining a bound sharp up to factors of $H^{\varepsilon}$ in the full range $H < x^{1 - \varepsilon}$ is equivalent to the Riemann Hypothesis. These results improve on a result of Hall (1982) for short intervals, and earlier results of Warlimont, Vaughan, Blomer, Nunes and Le Boudec in the case of arithmetic progressions.
  \end{abstract}

  \maketitle

\section{Introduction}

\subsection{Main results}

An integer $n \geq 1$ is squarefree if it is not divisible by the square of a prime.
By analogy with questions about prime numbers,
a basic problem in analytic number theory is to understand the distribution of squarefree numbers
in arithmetic progressions and in short intervals. Squarefree numbers ought to be a simpler, more
regular sequence than primes, and yet they present distinct challenges; for instance
we can determine whether $n$ is prime in polynomial time \cite{primesP},
but there is no known polynomial time algorithm to determine whether $n$ is squarefree. 

It was conjectured by Montgomery (see \cite{Croft}) that for any given $\varepsilon \in (0, 1/100)$, and $(a,q) = 1$,
\begin{equation} \label{eq:montconj}
\sum_{\substack{n \leq x \\ n \equiv a \Mod{q}}} \mu^2(n) = \frac{6}{\pi^2} \cdot \frac{x}{q} \prod_{p | q} \Big ( 1 - \frac{1}{p^2} \Big ) + O_{\varepsilon} \Big ( (x / q)^{1/4 + \varepsilon} \Big ).
\end{equation}
uniformly in $1 \leq q \leq x^{1 - \varepsilon}$. 
This conjecture is difficult for two reasons. In the regime of large $q$ of size roughly $x^{1 - \varepsilon}$ the left-hand side contains only $x^{\varepsilon}$ terms and even establishing an asymptotic is open\footnote{In fact establishing that the left-hand side is positive for $q = x^{1 - \varepsilon}$ is open!} (see the work of Nunes \cite{nunes2017} for the best result in this direction). In the regime of small $q$ of size about $x^{\varepsilon}$ establishing an asymptotic is easy but obtaining an error term as good as $O_{\varepsilon}((x / q)^{1/4 + \varepsilon})$ is an open problem, even conditionally on the Generalized Riemann Hypothesis.

Analogously we conjecture that for any given $\varepsilon \in (0, \tfrac{1}{100})$, uniformly in $x^{\varepsilon} \leq H \leq x$, 
\begin{equation} \label{eq:shortconj}
\sum_{\substack{x < n \leq x + H}} \mu^2(n) = \frac{6 H}{\pi^2} + O_{\varepsilon}(H^{1/4 + \varepsilon}).
\end{equation}
Similarly to the case of arithmetic progressions, when $H$ is close to $x^{\varepsilon}$ no asymptotic estimates are known (see the work of Tolev \cite{tolev2006} and Filaseta-Trifonov \cite{filaseta1992} for the best unconditional results in this direction and \cite[Thm.~A.1]{carmon2019}, \cite{granville1998} for results conditional on the ABC conjecture). Meanwhile for large $H$, say $H = x$, estimating \eqref{eq:shortconj} asymptotically is straightforward, but obtaining an error term $O_{\varepsilon}(x^{1/4 + \varepsilon})$ is an open problem, even conditionally on the Riemann Hypothesis (see \cite{Liu2016} for the best result in this direction).

An important feature of both conjectures \eqref{eq:montconj} and \eqref{eq:shortconj} is that the error term is significantly smaller than the square-root of the number of terms being summed, in contrast to what a naive probabilistic model predicts.  

The conjectures \eqref{eq:montconj} and \eqref{eq:shortconj} imply the Riemann Hypothesis, and they are almost certainly deeper than the Riemann Hypothesis. 
Nonetheless one can still hope
to investigate them on average over residue classes for \eqref{eq:montconj} or on average over short intervals for \eqref{eq:shortconj}.
Importantly, establishing \eqref{eq:montconj} on average is easier when $q$ is large than when $q$ is small, since a large $q$ allows for more averaging over the residue classes $a \Mod{q}$. Similarly establishing \eqref{eq:shortconj} on average is easier when $H$ is small, since there are more non-overlapping short intervals $[x, x + H]$ to average over compared to the case when $H$ is large. In fact when there is little averaging (i.e $q$ small or $H$ large), the averaged versions of \eqref{eq:montconj} and \eqref{eq:shortconj} are not significantly easier than the non-averaged version, see Theorem \ref{thm:main_upper} for a concrete manifestation of this. 

In our first result we compute the variance of \eqref{eq:shortconj} on average over short intervals. We estimate the variance asymptotically thus making (on average) the error term in \eqref{eq:shortconj} more precise. 

\begin{theorem} \label{thm:main}
  Let $\varepsilon \in (0, \tfrac{1}{100})$ be given.
  Let $X \geq 1$ and $1 \le H \leq X^{6/11 - \varepsilon}$. Then 
  \begin{equation} \label{eq:main}
  \frac{1}{X} \int_{X}^{2X} \Big | \sum_{x < m \leq x + H} \mu^2(m) - \frac{6H}{\pi^2} \Big |^2 dx = C \sqrt{H} + O_{\varepsilon}(H^{1/2-\varepsilon/16})
  \end{equation}
  with 
  \begin{equation}\label{eq:C_dfn}
  C:= \frac{\zeta(3/2)}{\pi} \prod_p \Big(1 - \frac{3}{p^2} + \frac{2}{p^3}\Big).
  \end{equation}
  Assuming the Lindel\"of Hypothesis \eqref{eq:main} holds in the wider range $H \leq X^{2/3-\varepsilon}$.
\end{theorem}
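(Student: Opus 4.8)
The plan is to start from the Dirichlet convolution identity $\mu^2(m) = \sum_{d^2 \mid m} \mu(d)$, which lets us write the indicator of squarefrees in a short interval as a sum over $d$ of the number of multiples of $d^2$ in $(x, x+H]$. Splitting $d$ at a threshold $z$ (to be chosen around a small power of $H$), the contribution of $d \le z$ is handled by Poisson-type summation / the classical reduction that expresses $\sum_{x<m\le x+H,\, d^2\mid m}\mu(d) - 6H/\pi^2$ as a sum of sawtooth functions $\psi(\cdot)$; the contribution of $d > z$ is controlled on average in $x$ by a second-moment bound. After expanding the square and integrating in $x$ over $[X, 2X]$, the main term $C\sqrt H$ should emerge from the diagonal-type terms in the resulting double sum over $d_1, d_2$, with the arithmetic factor in \eqref{eq:C_dfn} coming from an Euler product computation; the off-diagonal terms must be shown to be lower order.

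The cleanest route is via a contour-integral / Perron representation. Writing $M(x) := \sum_{x<m\le x+H}\mu^2(m) - 6H/\pi^2$, one expresses $M(x)$ (or a smoothed version) in terms of $1/\zeta(s)$ on a vertical line, so that the second moment $\frac1X\int_X^{2X} |M(x)|^2\,dx$ becomes, after moving contours and using the approximate functional equation, an expression controlled by a mean value of $|\zeta(1/2+it)|^{-2}$ or rather by $\sum_{n} \mu^2(n)/n^{s}$-type Dirichlet series evaluated near $s = 1/2$. The pole structure of the relevant Dirichlet series $\sum_m \mu^2(m)\mu^2(m+h)/\cdots$, or more precisely the singularity at $s=1/2$ coming from $\zeta(2s)$ in $\sum \mu^2(m) m^{-s} = \zeta(s)/\zeta(2s)$, is what produces the $\sqrt H$ main term with the constant $C$; here $\zeta(3/2)$ enters exactly as the value $\zeta(2\cdot 3/4)$ reflecting the $d^2$ structure.

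Concretely I would: (1) fix a smooth majorant/minorant for the interval and reduce \eqref{eq:main} to estimating $\frac1X\int \big|\sum_{d\le D}\mu(d)\psi_H(x/d^2)\big|^2 dx$ plus an error from the tail $d>D$; (2) expand the square, getting a double sum over $d_1,d_2$ of $\mu(d_1)\mu(d_2)$ times a correlation integral of sawtooth functions, which by Fourier expansion of $\psi$ turns into sums over frequencies $k_1,k_2$; (3) isolate the diagonal $k_1 d_1^2 = k_2 d_2^2$ (equivalently, after writing $d_i = g e_i$, the condition $k_1 e_1^2 = k_2 e_2^2$, forcing $e_1 = e_2$), which contributes the main term, and evaluate the resulting arithmetic sum $\sum_{g,e,k}$ as an Euler product to get $C\sqrt H$; (4) bound the off-diagonal $k_1 d_1^2 \ne k_2 d_2^2$ using the oscillation $\int_X^{2X} e(x(\sqrt{k_1}/d_1 - \sqrt{k_2}/d_2))\,dx \ll \min(X, \|\cdot\|^{-1})$ together with a counting/spacing argument for the fractional parts $\sqrt{k}/d$, and (5) bound the tail $d>D$ via a mean-value estimate, which is where the range of $H$ enters — here we either invoke a fourth-moment / large-values input for $\zeta$ unconditionally to reach $H\le X^{6/11-\varepsilon}$, or use the Lindelöf Hypothesis to push the tail estimate to $H \le X^{2/3-\varepsilon}$.

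The main obstacle is step (4)/(5): controlling the off-diagonal and tail uniformly in the full range of $H$. The off-diagonal terms in the sawtooth correlation are governed by how well $\sqrt{k_1}/d_1$ can approximate $\sqrt{k_2}/d_2$, and getting power-saving cancellation here requires either a careful exponent-pair/van der Corput analysis of the exponential sums over $k$ and $d$, or — equivalently — a mean-square bound for $1/\zeta$ on the critical line with a short Dirichlet polynomial, which is exactly the point where the Lindelöf Hypothesis gives the stronger range $X^{2/3-\varepsilon}$ and an unconditional fourth-moment-type estimate gives only $X^{6/11-\varepsilon}$. I expect the bookkeeping of the arithmetic Euler product for $C$ in \eqref{eq:C_dfn} to be routine once the diagonal is correctly identified, so the real work is the analytic input bounding the non-diagonal contribution.
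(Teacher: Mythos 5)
Your overall architecture matches the paper's: the identity $\mu^2(m) = \sum_{d^2 \mid m}\mu(d)$, a split at a threshold $z$, a sawtooth/Fourier expansion plus Diophantine point-counting for the small-$d^2$ contribution, and a contour-integral/Plancherel reduction to a mean value of $\zeta(\tfrac12 + it)\,M(1+2it)$ (combined with Huxley's large-value theorem, the fourth moment of $\zeta$, and subconvexity, or Lindel\"of for the improved range) for the tail $d^2>z$. Two points of calibration: the Perron route you sketch in your second paragraph is used only for the tail (Proposition~\ref{pr:prop2}) — the main term is extracted by the real-variable sawtooth expansion in Proposition~\ref{pr:prop1} — and the large-value input is applied to the short M\"obius polynomial $M(1+2it) = \sum_{d\sim D}\mu(d)d^{-1-2it}$, not to $\zeta$.

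Your step (3), however, misidentifies the diagonal in a way that would produce the wrong constant. Writing $d_i = g e_i$ with $g=(d_1,d_2)$ and $(e_1,e_2)=1$, the diagonal $k_1 d_2^2 = k_2 d_1^2$ (your display swaps indices) parametrizes as $k_1 = \lambda e_1^2$, $k_2 = \lambda e_2^2$ with $\lambda\in\mathbb{Z}$, leaving $e_1$ and $e_2$ \emph{free}; it does not force $e_1=e_2$. Were $e_1=e_2$ forced one would have $e_1=e_2=1$, hence $d_1=d_2$, and the main term would collapse to a single sum over $d$ with a different Euler product from \eqref{eq:C_dfn}. In the paper the diagonal contributes the genuine double sum
\[
2H^2\sum_{d_1,d_2}\frac{\mu(d_1)\mu(d_2)}{d_1^2 d_2^2}\sum_{\lambda\ge1}S\Big(\frac{H\lambda}{(d_1^2,d_2^2)}\Big)^2,
\]
and it is exactly the $(d_1^2,d_2^2)$ in the argument of $S$ that generates, via the contour shift in Lemma~\ref{le:asympt}, the factor $\zeta(3/2)$ — from evaluating $\zeta(1-s)$, the Mellin transform of the $\lambda$-sum, at the pole $s=-1/2$ of $\zeta(2+2s)$ coming from the $d_1,d_2$-sums — together with the Euler product $\prod_p(1-3/p^2+2/p^3)$. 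A smaller slip: the frequencies arising from the Fourier expansion of $\psi(x/d^2)$ are $n/d^2$, not $\sqrt{k}/d$; square roots only appear after factoring $|n_1 d_2^2 - n_2 d_1^2|$ in the off-diagonal bound (Lemma~\ref{le:lat2}), where one needs a spacing estimate for $\|m\sqrt{n_2/n_1}\|$.
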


We recall that the Lindel\"of Hypothesis follows from the Riemann Hypothesis and
asserts that for any given $\varepsilon > 0$ we have $|\zeta(\tfrac 12 + it)| \ll_{\varepsilon} 1 + |t|^{\varepsilon}$ for all $t \in \mathbb{R}$. In Theorem~\ref{thm:main_upper} we will show that if we had \eqref{eq:main} in the full range $H \leq X^{1 - \varepsilon}$ then the Riemann Hypothesis would ensue.

Theorem \ref{thm:main} extends a theorem of Hall \cite{hall1982} who showed that the asymptotic formula \eqref{eq:main} holds in the range $H \leq X^{2/9-\varepsilon}$. We will now explain why the range $H = X^{1/2}$ can be considered a threshold in this problem. It is reasonable to conjecture that given $\varepsilon > 0$,
for any $1 \leq h \leq x^{1 - \varepsilon}$,
\begin{equation} \label{eq:corsums}
\sum_{n \leq x} \mu^2(n) \mu^2(n + h) - C(h) x = O_{\varepsilon}(x^{1/4 + \varepsilon})
\end{equation}
with $C(h)$ a constant depending only on $h$. Summing this conjectural estimate over $h$ recovers Theorem \ref{thm:main} but only in the range $H < X^{1/2 - \varepsilon}$.
Thus Theorem \ref{thm:main} exploits (unconditionally!) additional cancellations between the error terms in \eqref{eq:corsums}.\footnote{Using estimates of Tsang \cite{tsang1985} for \eqref{eq:corsums} recovers Theorem \ref{thm:main} in the range $H \leq X^{8/33 - \varepsilon}$.} 

We now describe the analogue of Theorem \ref{thm:main} for the distribution of squarefree numbers in arithmetic progressions with a
given modulus. In this case for a given modulus $q$ the parameter $x / q$ has the same role as the length $H$ of the short interval
in Theorem \ref{thm:main}. While the results are analogous they are harder to prove, as is often the case with $q$-analogues. 

\begin{theorem} \label{thm:main2}
  Let $\varepsilon \in (0, \tfrac{1}{100})$ be given.
  Let $x \ge q \geq x^{5/11 + \varepsilon}$ be a prime. Then
  \begin{equation}
  \begin{aligned}\label{eq:main2}
  \frac{1}{\varphi(q)}\sum_{(a,q) = 1} & \Big | \sum_{\substack{m \leq x \\ m \,\equiv\, a \Mod{q}}} \mu^2(m) - \frac{6}{\pi^2}\cdot \frac{x}{q} \prod_{p | q} \Big(1-\frac{1}{p^2}\Big)^{-1} \Big |^2 \\  & = C \prod_{p | q} \Big (1 + \frac{2}{p} \Big )^{-1} \cdot \sqrt{\frac{x}{q}} + O_{\varepsilon}((x/q)^{1/2-\varepsilon/16}),
  \end{aligned}
  \end{equation}
  where $C$ is the same constant as in Theorem \ref{thm:main}. Assuming the Generalized Lindel\"of Hypothesis 
  the claim holds in the wider range $q > x^{1/3 + 30 \varepsilon}$. 
\end{theorem}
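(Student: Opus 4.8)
The plan is to carry out the $q$-analogue of the proof of Theorem~\ref{thm:main}, under the dictionary ``short interval of length $H$'' $\longleftrightarrow$ ``progression modulo the prime $q$, with $H=x/q$''. Note that this dictionary already converts the ranges correctly: $x/q<x^{6/11-\varepsilon}$ is $q>x^{5/11+\varepsilon}$ and $x/q<x^{2/3-\varepsilon}$ is $q>x^{1/3+\varepsilon}$. First, unfold the variance by orthogonality of Dirichlet characters modulo $q$. With $S(\chi):=\sum_{m\le x}\mu^{2}(m)\chi(m)$, the left-hand side of \eqref{eq:main2} equals
\[
\frac{1}{\varphi(q)^{2}}\sum_{\chi\neq\chi_{0}}|S(\chi)|^{2}\;+\;O\!\Bigl(\frac{x}{q^{2}}\Bigr),
\]
where the $O$-term is the square of the discrepancy between the subtracted main term and the exact average $\varphi(q)^{-1}\sum_{(m,q)=1,\,m\le x}\mu^{2}(m)$, which is $O(\sqrt{x}/q)$ and so is admissible throughout the stated ranges (in fact for $q\gg x^{1/3}$). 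It remains to produce an asymptotic for the character average on the right, the principal character being excluded since it corresponds precisely to the square of the mean.

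For each $\chi\neq\chi_{0}$ one has $\sum_{m}\mu^{2}(m)\chi(m)m^{-s}=L(s,\chi)/L(2s,\chi^{2})$, the exact analogue of $\zeta(s)/\zeta(2s)$, and a smoothed Perron formula represents $S(\chi)$ as a contour integral of $L(s,\chi)x^{s}/\bigl(s\,L(2s,\chi^{2})\bigr)$ with no pole to extract, the numerator being entire. Moving the contour to a line $\mathrm{Re}(s)=\sigma_{0}$ with $\tfrac14<\sigma_{0}<\tfrac12$, opening the square and summing over $\chi$ — here the character orthogonality takes over the role of the average over $x$ in Theorem~\ref{thm:main} — and collecting the surviving terms after the large main contributions cancel against those of $\chi_{0}$ (exactly as the analogous main terms cancel in the short-interval problem), what is left is a secondary term of the expected size $(x/q)^{1/2}$. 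Running the computation of Theorem~\ref{thm:main} at $H=x/q$ identifies it as $C\prod_{p\mid q}(1+2/p)^{-1}\sqrt{x/q}$ with $C$ as in \eqref{eq:C_dfn}; the modified local factor at $q$ (namely $1-3/p^{2}+2/p^{3}$ replaced by $(1-1/p)^{2}$ at $p\mid q$) appears because fixing a residue class coprime to $q$ removes the local contribution at $q$ of the fluctuations of squarefree numbers modulo $q^{2}$. The unique quadratic character modulo $q$ has $\chi^{2}=\chi_{0}$, so that $L(2s,\chi^{2})$ reduces to $\zeta(2s)$ up to the Euler factor at $q$; it is treated by the same method, now with a single Dirichlet $L$-function in place of the whole family.

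The main obstacle, precisely as for Theorem~\ref{thm:main}, is bounding the error: pushing the contour to $\mathrm{Re}(s)=\sigma_{0}<\tfrac12$ and estimating the resulting mean value over $\chi$ and over the effective range $|t|\ll q$ must cost only $O_{\varepsilon}((x/q)^{1/2-\varepsilon/16})$. Two points need attention, the first genuinely absent from the short-interval problem. (i) For an individual $\chi$ the function $L(2s,\chi^{2})$ may vanish for $\tfrac12<\mathrm{Re}(2s)<1$, so the contour cannot be moved freely past $\mathrm{Re}(s)=\tfrac12$; one deals with this by a zero-density estimate for Dirichlet $L$-functions modulo $q$, which shows that the characters — and the heights $t$ — for which such a zero intervenes are too sparse to contribute more than the allowed error, while for the remaining $\chi$ one has $1/L(2s,\chi^{2})\ll_{\varepsilon}(q(1+|t|))^{\varepsilon}$ on $\mathrm{Re}(s)=\sigma_{0}$. (ii) On that line one needs a mean-value bound for $L(s,\chi)$ averaged over $\chi\Mod{q}$ and over $|t|\ll q$: the fourth moment $\sum_{\chi\Mod{q}}\int_{-T}^{T}|L(\tfrac12+it,\chi)|^{4}\,dt\ll_{\varepsilon}(qT)^{1+\varepsilon}$, transferred from $\mathrm{Re}(s)=\sigma_{0}$ to the critical line by the functional equation, suffices; balancing $\sigma_{0}$ against these two inputs yields exactly $q>x^{5/11+\varepsilon}$. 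Under the Generalized Lindel\"of Hypothesis both difficulties dissolve — one then has $L(s,\chi)\ll_{\varepsilon}(q(1+|t|))^{\varepsilon}$ on and to the right of the critical line, together with essentially optimal density estimates for the zeros of $L(2s,\chi^{2})$ — so $\sigma_{0}$ may be taken down to $\tfrac14+\delta$ and the error becomes $O((x/q)^{1/2-\delta'})$ as soon as $q>x^{1/3+30\varepsilon}$. The remaining steps — the smoothed Perron truncation, the passage between the variance around the stated main term and around the exact mean, and the bookkeeping of Euler factors at $q$ — all parallel those in Theorem~\ref{thm:main}, with the extra chore of handling every character modulo $q$ simultaneously.
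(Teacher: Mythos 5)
Your dictionary $H \leftrightarrow x/q$ and the orthogonality unfolding $\frac{1}{\varphi(q)^2}\sum_{\chi\neq\chi_0}|S(\chi)|^2$ are both correct, and the exponent conversions check out. But the engine you then propose — expressing $S(\chi)$ via Perron applied to $L(s,\chi)/L(2s,\chi^2)$, shifting the contour to $\Re(s)=\sigma_0\in(\tfrac14,\tfrac12)$, and controlling the resulting poles from zeros of $L(2s,\chi^2)$ by a zero-density argument — is not what the paper does and has genuine gaps that are not resolved by the sketch.

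The paper never confronts $1/L(2s,\chi^2)$ and never moves a contour past the critical line. Instead it opens $\mu^2(m)=\sum_{nd^2=m}\mu(d)$ and splits at a threshold $z$: for $d^2\leq z$ (Propositions \ref{prop:prop1q}, \ref{prop:smoothing}, \ref{prop:asympt}) it applies P\'olya/Poisson summation in $n$ and extracts the main term $C\prod_{p\mid q}(1+2/p)^{-1}\sqrt{x/q}$ from the ``rational'' tuples $(n_1,n_2)\in\mathcal{M}=\{(k_1^2m,k_2^2m)\}$, while bounding the off-diagonal via counting representations by binary quadratic forms of positive discriminant (results of Williams). For $d^2>z$ (Proposition \ref{prop:prop2q}) it writes a Perron integral on $\Re(s)=\tfrac12$ of $L(\tfrac12+it,\chi)\cdot M(1+2it,\chi^2)$, where $M$ is the \emph{finite} truncated M\"obius polynomial $\sum_{d\sim D}\mu(d)\chi^2(d)d^{-s}$ evaluated at abscissa $1$, so there are no poles to cross; the estimate then rests on the hybrid large-value theorem, the hybrid fourth moment, and hybrid Weyl subconvexity. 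Your route replaces the truncated polynomial by the full $1/L(2s,\chi^2)$ and thereby reintroduces exactly the pole/zero problem that the paper's decomposition is designed to sidestep.

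Two concrete problems with your version. First, the main term: you say ``running the computation of Theorem \ref{thm:main} at $H=x/q$ identifies it,'' but the main term in Theorem \ref{thm:main} is \emph{not} produced by contour shifting of $\zeta(s)/\zeta(2s)$; it comes from the diagonal $n_1 d_2^2=n_2 d_1^2$ in a Fourier expansion (Proposition \ref{thm:smalldivisors}) fed into Lemma \ref{le:asymptS}, and in the $q$-aspect from the $\mathcal{M}$-diagonal after Poisson summation fed into Lemma \ref{le:asympt}. A second moment of the ratio $L(\sigma_0+it,\chi)/L(2\sigma_0+2it,\chi^2)$ over $\chi$ and $t$ would have to be evaluated asymptotically — including showing cancellation in the off-diagonal — and you have not indicated how this produces $C\sqrt{x/q}$ with the correct Euler factor at $p=q$, nor why it should mimic the diagonal analysis. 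Second, the zero problem: unconditionally, $L(2s,\chi^2)$ may have zeros with $\Re>2\sigma_0$, each contributing a residue proportional to $L(\rho/2,\chi)\,x^{\rho/2}/(\rho\,L'(\rho,\chi^2))$; to control the squared sum over all such residues and all $\chi$ you would need simultaneous lower bounds on $|L'(\rho,\chi^2)|$ (which are not available) and density bounds strong enough to beat the $x^{\Re\rho}$ growth, and you would still need to match the precise threshold $q>x^{5/11+\varepsilon}$ — this is exactly the kind of difficulty the paper's truncated-polynomial device is built to avoid, and the sketch does not demonstrate that it can be overcome.

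Your description of the functional-equation transfer compounds the issue: for $\sigma_0<\tfrac12$ the Gamma-factor ratio contributes a factor $(q(1+|t|))^{\frac12-\sigma_0}$ per $L$-value, so the fourth moment on the critical line does not transfer to $\Re(s)=\sigma_0$ without loss. The paper keeps $L$ on the critical line and pushes the savings into the M\"obius polynomial $M$ instead. In short, you have the right skeleton (unfold via characters, use hybrid analogues of the $t$-aspect tools, reach the stated ranges), but the heart of the argument — the small/large $d$ split, Poisson plus quadratic-form counting for the main term, and truncated M\"obius polynomials on the critical line for the tail — is missing, and the contour-shifting alternative you propose is not shown to work.
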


We recall that the Generalized Lindel\"of Hypothesis follows from the Generalized Riemann Hypothesis and asserts that for any given $\varepsilon > 0$ we have $|L(\tfrac 12 + it, \chi)| \ll_{\varepsilon} 1 + (|q| + |t|)^{\varepsilon}$ for all $t \in \mathbb{R}$ and all characters $\chi \Mod{q}$. 

For simplicity we have assumed in Theorem \ref{thm:main2} that $q$ is prime, but our methods are amenable to handling the general case of composite $q$ with a bit more effort.

Once extended to composite $q$ our
Theorem \ref{thm:main2} improves on results by Warlimont \cite{warlimont1980squarefree} and Vaughan \cite{vaughan2005variance} who obtain an asymptotic formula with an additional averaging over $q \leq Q$ in the range $x^{2/3} \leq Q = o(x)$.
Moreover, for prime values of $q$, Theorem \ref{thm:main2} improves on a succession of results by Blomer \cite{blomer2008}, Nunes \cite{nunes2015} (see also \cite{parry2019variance}) and Le Boudec \cite{leboudec2018} who considered individual averages over $(a,q) = 1$ as we do in Theorem \ref{thm:main2}. In particular Nunes showed that \eqref{eq:main2} holds in the range $x^{31/41+\varepsilon} \leq q = o(x)$ and Le Boudec showed that the left-hand side of \eqref{eq:main2} is $O_\varepsilon((x/q)^{1/2+\varepsilon})$ for all $\varepsilon > 0$ in the range $x^{1/2} \leq q \leq x$.

Keating and Rudnick \cite{keating2016} obtained Theorem \ref{thm:main} and Theorem \ref{thm:main2} in the context of function fields in the limit of a large field size. Their results hold in the (analogues of) the ranges $X^{\varepsilon} \leq H \leq X^{1 - \varepsilon}$ and $x^{\varepsilon} \leq q \leq x^{1 - \varepsilon}$. 
Our proofs of Theorem \ref{thm:main} and Theorem \ref{thm:main2} can be adapted in the setting of a fixed base field and large degree limit. In fact our proofs of Theorems \ref{thm:main} and \ref{thm:main2} were originally motivated by analogies with the function field setting. Since we ended up obtaining equally strong results in the setting of number fields we do not include the proofs in the function field setting.

Finally the next result shows that obtaining nearly optimal upper bounds for \eqref{eq:main} in a complete range is equivalent to the Riemann Hypothesis. 

\begin{theorem}\label{thm:main_upper}
  The Riemann Hypothesis holds if and only if for every $\varepsilon \in (0, \tfrac{1}{100})$ and every $1 \leq H \leq X^{1 - \varepsilon}$,
  \begin{equation} \label{eq:main_upper}
  \frac{1}{X} \int_{X}^{2X} \Big | \sum_{x < m \leq x + H} \mu^2(m) - \frac{6 H}{\pi^2} \Big |^2 dx \ll_{\varepsilon, \delta} H^{1/2 + \delta}
  \end{equation}
  for every $\delta > 0$.
\end{theorem}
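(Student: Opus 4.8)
\textbf{Proof proposal for Theorem \ref{thm:main_upper}.}

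The plan is to reduce the variance in \eqref{eq:main_upper} to a mean square of a Dirichlet polynomial over zeros of $\zeta$, via a Perron-type/contour argument applied to the generating Dirichlet series $\sum_n \mu^2(n) n^{-s} = \zeta(s)/\zeta(2s)$. Writing $S(x,H) := \sum_{x < m \le x+H} \mu^2(m) - 6H/\pi^2$, the starting point is the (smoothed) explicit formula: one expresses $\sum_{m \le y} \mu^2(m) - 6y/\pi^2$ in terms of $\sum_{\rho} y^{\rho/2}/(\rho \zeta'(\rho)) \cdot (\text{something})$, where $\rho$ runs over zeros of $\zeta(2s)$, i.e. over $\tfrac{1}{2}\rho_\zeta$ with $\rho_\zeta$ a nontrivial zero of $\zeta$. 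Thus $S(x,H)$ becomes essentially $\sum_{\rho_\zeta} \frac{(x+H)^{\rho_\zeta/2} - x^{\rho_\zeta/2}}{(\rho_\zeta/2)\,\zeta'(\rho_\zeta)} + (\text{error})$, and on RH each $x^{\rho_\zeta/2}$ has modulus $x^{1/4}$, which is precisely the source of the exponent $1/2$ (so $|S|^2 \asymp H^{1/2+o(1)}$ in the generic regime).

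For the forward direction (RH implies \eqref{eq:main_upper}), I would follow the now-standard route: on RH one has $\zeta(s) \neq 0$ for $\Re s > 1/2$, so by a Littlewood-type argument $M(y) := \sum_{n \le y} \mu^2(n) - 6y/\pi^2 \ll y^{1/4+o(1)}$ — but this pointwise bound alone only gives $H^{1/2+o(1)} \cdot (\text{something})$ too weak for large $H$, so instead I would integrate. The key is the mean-square bound
\begin{equation*}
\int_X^{2X} |M(x+H) - M(x)|^2\,dx,
\end{equation*}
which one opens up using the truncated Perron formula / the approach of Montgomery--Vaughan on mean values of Dirichlet polynomials, reducing it to an integral of $|\zeta(1/2+it)/\zeta(1+2it)|^2$ against a kernel of width $\asymp X/H$ localized near $|t| \asymp X/H$; on RH (or even just the Lindelöf bound, which RH implies) this integral is $\ll H^{1/2+o(1)}$ after accounting for the $\sqrt H$ main term extracted in Theorem \ref{thm:main}. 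Concretely I expect to invoke the hybrid bound $\int_0^T |\zeta(1/2+it)|^2\,dt \ll T^{1+o(1)}$ together with the zero-free region on the $1$-line giving $1/\zeta(1+2it) \ll t^{o(1)}$, and to handle the short-interval difference by a first-derivative (van der Corput / finite difference) gain of size $H \cdot (|t| \cdot x^{-1}) \asymp H|t|/X$ when this is $\le 1$. This step — controlling the mean square of the zeta-sum uniformly for $H$ all the way up to $X^{1-\varepsilon}$ — is the main obstacle, since the whole point of the theorem is that the unconditional Theorem \ref{thm:main} stops at $X^{6/11}$ (or $X^{2/3}$ under Lindelöf) precisely because of off-diagonal/large-sieve losses that only disappear under the full strength of RH.

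For the converse (\eqref{eq:main_upper} implies RH), I would argue by contradiction: suppose $\zeta$ has a zero $\rho_0 = \beta_0 + i\gamma_0$ with $\beta_0 > 1/2$. Then in the explicit formula for $M(y)$ the term $y^{\rho_0/2}$ contributes an oscillation of size $y^{\beta_0/2}$ with $\beta_0/2 > 1/4$, and one shows this oscillation is not cancelled by the other zeros on a positive-density set of $x$ (a standard Landau/Ingham-type argument, e.g. via the fact that $\Omega$-results for $M(y)$ follow from a single zero off the line — see the classical treatment of $\sum \mu^2(n)$, or adapt Ingham's oscillation theorem). Picking $H = X^{1-\varepsilon}$ and using that $M(x+H) - M(x)$ inherits a lower bound of order $x^{\beta_0/2 - o(1)}$ on a set of $x \in [X,2X]$ of measure $\gg X$ (here one must be slightly careful: the difference could a priori be smaller than each term, so one chooses $H$ comparable to $X$ up to the $x^{-\varepsilon}$ factor and uses that $(x+H)^{\rho_0/2} - x^{\rho_0/2}$ has modulus $\gg x^{\beta_0/2 - \varepsilon}$ for such $H$), one deduces that the left side of \eqref{eq:main_upper} is $\gg X^{\beta_0 - o(1)} \gg H^{\beta_0 - \varepsilon - o(1)}$, which exceeds $H^{1/2+\delta}$ once $\delta < \beta_0 - 1/2 - \varepsilon$ and $\varepsilon$ is small — a contradiction. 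The one delicate point in this direction is ensuring the oscillation from $\rho_0$ survives the finite difference and is not destroyed by the competing contributions of the infinitely many other zeros; this is handled, as usual, by a weighted average (a Cesàro or Riesz mean in $x$) that isolates the frequency $\gamma_0/2$.
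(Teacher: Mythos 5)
Your proposal identifies the right general framework -- a Plancherel/Perron reduction to a weighted $L^2$-integral of a zeta-quotient for the forward direction, and an oscillation argument for the converse -- but both directions have gaps that are precisely the points the paper's proof is engineered to overcome.

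\textbf{Forward direction.} The central issue is that you propose to bound
\[
\frac{H}{T}\int_{|t|\le T} \Big|\frac{\zeta(1/2+it)}{\zeta(1+2it)}\Big|^2\,dt
\]
using the pointwise RH consequences $\zeta(1/2+it)\ll t^{o(1)}$ and $1/\zeta(1+2it)\ll t^{o(1)}$. That only gives $\ll H\cdot T^{o(1)}$, i.e. $H^{1+o(1)}$, a full power of $H$ too weak. The paper instead opens $\mu^2 = 1*\mu_{\mathrm{sq}}$, splits the $d$-sum into dyadic blocks $d\sim D$, and works with the \emph{finite} polynomial $M(1+2it)=\sum_{d\sim D}\mu(d)\,d^{-1-2it}$. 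Under RH one gets the square-root-cancellation pointwise bound $M(1+2it)\ll D^{-1/2+\delta}$, which carries a $D^{-1/2}$ saving that the pointwise bound on $1/\zeta(1+2it)$ simply does not have. Inserting that into \eqref{eq:final} gives $\ll H\,D^{-1+\delta}$, which is $\ll H^{1/2+\delta}$ only once $D\gg H^{(1-\delta)/2}$. The truly nontrivial range is $H^{1/4}\le D\le H^{(1-\delta)/2}$, where even the RH bound on $M$ is insufficient; there the paper attaches a smooth cutoff $f(nD^2/X)$ to the $n$-variable and uses Poisson summation to show that $N_1(1/2+it)-N_2(1/2+it)$ is negligible for $|t|\le X/D^{2+\delta/100}$, so that the Plancherel integral is supported on large $|t|$ where the kernel $\min\{(H/X)^2, |t|^{-2}\}$ decays rapidly. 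That smoothing-plus-Poisson step is the genuinely new idea that pushes the result from $H\le X^{2/3-\varepsilon}$ (attainable under Lindel\"of alone) all the way to $H\le X^{1-\varepsilon}$, and it is not present in your sketch.

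\textbf{Converse direction.} Your proposed Landau--Ingham route asks for a lower bound of order $x^{\beta_0/2-o(1)}$ for $|M(x+H)-M(x)|$ on a set of $x\in[X,2X]$ of measure $\gg X$. Classical oscillation theorems only produce $\Omega$-results along sparse sequences, and upgrading them to positive-density lower bounds is exactly the delicate step you flag but do not resolve. The paper sidesteps this entirely: from \eqref{eq:main_upper} with $H=X^{1-\delta}$ and Cauchy--Schwarz it deduces a bound of the form $\int\Phi(x/X)\,H^{-1}\sum_{x<m\le x+H}\mu^2(m)\,dx = 6X\widehat{\Phi}(0)/\pi^2 + O(X^{1/4+3\delta})$, translates this into an $O(X^{\Theta-\delta/2})$ pointwise bound on the smoothed error $A(x)$, concludes that the Mellin transform $\int_0^\infty A(x)x^{-s-1}\,dx=\zeta(s)\widetilde{\Phi}(s)/\zeta(2s)-6\widetilde{\Phi}(1)/(\pi^2(s-1))$ is analytic for $\Re s > \Theta-\delta/50$, and obtains a contradiction with the pole at $s_0$. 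Your remark about a weighted Ces\`aro/Riesz mean isolating the frequency is pointing at the same device (the test function $\Phi$), but the actual contradiction is complex-analytic (analytic continuation past a pole) rather than a measure-theoretic non-cancellation argument, and you would still owe a rigorous version of the positive-density claim if you wanted to pursue your route.
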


Following the proof of Theorem \ref{thm:main_upper} one can show that for any smooth compactly supported $\Phi$, conditionally on the Generalized Riemann Hypothesis
\begin{equation} \label{eq:mm}
  \frac{1}{\varphi(q)} \sum_{(a,q) = 1} \Big | \sum_{\substack{m \equiv a \pmod{q}}} \mu^2(n) \Phi\Big ( \frac{n}{x} \Big ) - \frac{6}{\pi^2 \varphi(q)} \sum_{\substack{(m,q) = 1}} \Phi \Big ( \frac{m}{x} \Big ) \Big |^2 \ll_{\varepsilon, \delta} (x / q)^{1/2 + \delta}
\end{equation}
for all $\delta > 0$ and uniformly in $1 \leq q \leq x^{1 - \varepsilon}$ for any given $\varepsilon \in (0, \tfrac{1}{100})$. However it is not clear whether \eqref{eq:mm} implies the Generalized Riemann Hypothesis. Moreover replacing the smoothing $\Phi$ by sharp cut-offs appears to be difficult. For these reasons we decided not to pursue this further in the present paper. 

Finally, we note that we have made no effort to optimize the exponents of error terms $O_{\varepsilon}(H^{1/2-\varepsilon/16})$ and $O_{\varepsilon}((x/q)^{1/2-\varepsilon/16})$ in Theorems \ref{thm:main} and \ref{thm:main2}. Better power saving estimates, in more restricted ranges, can be found in the papers \cite{hall1982} and \cite{nunes2015}.

\begin{figure}[b]
  \centering
\begin{tikzpicture} 
\draw [blue, xshift=0cm] plot [smooth, tension=1] coordinates { (0.0,0.0) 
  (0.01,-0.0477276) (0.02,-0.29111) (0.03,-0.16764) (0.04,-0.203139) (0.05,-0.22641) (0.06,-0.212995) (0.07,-0.309636) (0.08,-0.369593) (0.09,-0.331721) (0.1,-0.416134) (0.11,-0.353806) (0.12,-0.340391) (0.13,-0.0824074) (0.14,-0.0445359) (0.15,0.188991) (0.16,-0.15222) (0.17,0.0690786) (0.18,-0.0520197) (0.19,-0.258717) (0.2,-0.0496474) (0.21,-0.378629) (0.22,-0.120646) (0.23,-0.0950027) (0.24,-0.216101) (0.25,-0.055945) (0.26,-0.225957) (0.27,-0.0780296) (0.28,-0.064615) (0.29,0.0588556) (0.3,0.0355849) (0.31,0.305797) (0.32,0.245841) (0.33,0.136971) (0.34,0.0403296) (0.35,0.0292873) (0.36,0.0549303) (0.37,0.251772) (0.38,-0.016068) (0.39,0.119631) (0.4,-0.00146724) (0.41,0.195374) (0.42,0.0131335) (0.43,0.0754619) (0.44,0.0766481) (0.45,0.0411489) (0.46,-0.0065787) (0.47,-0.127677) (0.48,-0.114262) (0.49,0.143721) (0.5,0.181593) (0.51,0.402891) (0.52,0.660874) (0.53,0.454177) (0.54,0.284165) (0.55,0.236437) (0.56,0.0664252) (0.57,0.397779) (0.58,0.239996) (0.59,0.351238) (0.6,0.413566) (0.61,0.414752) (0.62,0.232512) (0.63,0.0258145) (0.64,0.149285) (0.65,0.260527) (0.66,0.335084) (0.67,0.299585) (0.68,-0.0416253) (0.69,0.289729) (0.7,0.217544) (0.71,0.0597607) (0.72,0.0976322) (0.73,0.123275) (0.74,0.246746) (0.75,0.309074) (0.76,0.151291) (0.77,0.0179639) (0.78,0.165891) (0.79,0.167078) (0.8,0.168264) (0.81,0.206135) (0.82,0.0483516) (0.83,0.330792) (0.84,0.050724) (0.85,0.44322) (0.86,0.358807) (0.87,0.298851) (0.88,0.238895) (0.89,0.582478) (0.9,0.412466) (0.91,0.499251) (0.92,0.573808) (0.93,0.574994) (0.94,0.686236) (0.95,0.565138) (0.96,0.798665) (0.97,0.922135) (0.98,0.923321) (0.99,0.802223) (1,0.55884) (1.01,0.388828) (1.02,0.353329) (1.03,0.378972) (1.04,0.123361) (1.05,0.332431) (1.06,0.235789) (1.07,-0.0687356) (1.08,0.0669634) (1.09,0.129292) (1.1,0.020422) (1.11,-0.100676) (1.12,-0.0750333) (1.13,0.0484374) (1.14,0.0129382) (1.15,0.087495) (1.16,0.149823) (1.17,0.322208) (1.18,0.27448) (1.19,0.300123) (1.2,0.276853) (1.21,0.143526) (1.22,0.0835697) (1.23,0.341553) (1.24,0.134856) (1.25,-0.0106993) (1.26,0.161685) (1.27,0.150643) (1.28,-0.0560546) (1.29,0.0918729) (1.3,0.39877) (1.31,0.375499) (1.32,0.26663) (1.33,0.23113) (1.34,0.171174) (1.35,0.196817) (1.36,0.30806) (1.37,0.394845) (1.38,0.530544) (1.39,0.495045) (1.4,0.325033) (1.41,0.0571931) (1.42,0.241806) (1.43,0.0595654) (1.44,0.0974369) (1.45,-0.011433) (1.46,-0.071389) (1.47,-0.0824313) (1.48,0.0777246) (1.49,-0.15343) (1.5,0.129011) (1.51,0.10574) (1.52,0.253668) (1.53,0.193711) (1.54,0.207126) (1.55,0.269454) (1.56,0.0627571) (1.57,0.161771) (1.58,0.0284441) (1.59,-0.00705512) (1.6,-0.164839) (1.61,-0.065825) (1.62,-0.040182) (1.63,0.0588318) (1.64,-0.11118) (1.65,-0.476848) (1.66,-0.341148) (1.67,-0.229906) (1.68,-0.461061) (1.69,-0.239762) (1.7,-0.0184642) (1.71,-0.139563) (1.72,-0.0405488) (1.73,-0.0515911) (1.74,0.0474227) (1.75,-0.281559) (1.76,0.000881214) (1.77,0.124352) (1.78,0.162223) (1.79,0.175638) (1.8,0.176824) (1.81,0.288066) (1.82,0.105826) (1.83,0.302667) (1.84,0.0959696) (1.85,0.439552) (1.86,0.147256) (1.87,0.16067) (1.88,0.112943) (1.89,0.211956) (1.9,0.286513) (1.91,-0.0180117) (1.92,-0.00459706) (1.93,-0.199066) (1.94,-0.148966) (1.95,-0.0988661) (1.96,-0.0609946) (1.97,-0.157636) (1.98,0.112576) (1.99,0.028163) (2,0.0171207) (2.01,-0.0428354) (2.02,-0.0294208) (2.03,0.106278) (2.04,-0.0637338) (2.05,0.182021) (2.06,0.0242377) (2.07,0.0254238) (2.08,0.295636) (2.09,0.0400246) (2.1,-0.0933021) (2.11,-0.3) (2.12,-0.372184) (2.13,-0.260942) (2.14,-0.137471) (2.15,0.05937) (2.16,0.0727846) (2.17,0.0617424) (2.18,0.148528) (2.19,0.247541) (2.2,0.260956) (2.21,0.00534479) (2.22,0.0187594) (2.23,-0.0289682) (2.24,0.155645) (2.25,-0.222251) (2.26,-0.111009) (2.27,-0.109823) (2.28,-0.145322) (2.29,0.0759763) (2.3,-0.106264) (2.31,-0.251819) (2.32,-0.189491) (2.33,-0.22499) (2.34,-0.0159204) (2.35,0.0341795) (2.36,-0.0502334) (2.37,-0.171332) (2.38,0.135566) (2.39,0.381321) (2.4,0.125709) (2.41,0.273637) (2.42,0.262595) (2.43,0.410522) (2.44,0.130454) (2.45,0.180554) (2.46,-0.00168645) (2.47,0.268526) (2.48,0.636565) (2.49,0.588837) (2.5,0.443282) (2.51,0.493382) (2.52,0.506797) (2.53,0.581354) (2.54,0.496941) (2.55,0.253558) (2.56,0.340343) (2.57,0.25593) (2.58,0.391629) (2.59,0.0748759) (2.6,0.17389) (2.61,0.334046) (2.62,0.18849) (2.63,0.214133) (2.64,0.28869) (2.65,0.240963) (2.66,-0.0391054) (2.67,0.0109945) (2.68,0.110008) (2.69,0.294621) (2.7,0.210208) (2.71,0.199166) (2.72,0.273723) (2.73,0.470564) (2.74,0.483979) (2.75,0.570764) (2.76,0.498579) (2.77,0.524222) (2.78,0.317525) (2.79,0.392082) (2.8,0.601152) (2.81,0.443368) (2.82,0.627981) (2.83,0.678081) (2.84,1.00943) (2.85,0.851651) (2.86,0.644954) (2.87,0.756196) (2.88,0.757382) (2.89,0.831939) (2.9,0.723069) (2.91,0.858768) (2.92,0.847726) (2.93,0.836684) (2.94,0.691129) (2.95,0.936884) (2.96,0.962527) (2.97,1.02486) (2.98,0.977127) (2.99,0.9294) (3,0.832758) (3.01,0.931772) (3.02,0.908501) (3.03,0.885231) (3.04,0.788589) (3.05,0.924288) (3.06,0.778733) (3.07,0.657635) (3.08,0.768877) (3.09,0.806749) (3.1,0.673422) (3.11,0.906948) (3.12,0.908135) (3.13,0.872635) (3.14,0.947192) (3.15,0.93615) (3.16,0.998478) (3.17,0.999665) (3.18,0.878566) (3.19,0.879752) (3.2,0.697512) (3.21,0.68647) (3.22,0.748798) (3.23,0.737756) (3.24,0.445459) (3.25,0.324361) (3.26,0.423375) (3.27,0.583531) (3.28,0.743687) (3.29,0.720416) (3.3,0.464805) (3.31,0.600504) (3.32,0.589461) (3.33,0.835217) (3.34,0.469549) (3.35,0.421822) (3.36,0.435236) (3.37,0.485336) (3.38,0.621035) (3.39,0.622221) (3.4,0.513352) (3.41,0.538995) (3.42,0.430125) (3.43,0.431311) (3.44,0.297984) (3.45,0.103515) (3.46,0.0680161) (3.47,0.00806006) (3.48,-0.198637) (3.49,-0.172994) (3.5,-0.0495237) (3.51,-0.0972513) (3.52,0.172961) (3.53,-0.00927985) (3.54,0.114191) (3.55,0.188748) (3.56,0.104335) (3.57,0.215577) (3.58,0.24122) (3.59,0.217949) (3.6,0.0968509) (3.61,0.0735801) (3.62,-0.133117) (3.63,0.100409) (3.64,0.0893671) (3.65,0.139467) (3.66,0.214024) (3.67,0.239667) (3.68,0.387594) (3.69,0.413237) (3.7,0.255454) (3.71,0.0854418) (3.72,0.221141) (3.73,0.136728) (3.74,0.0890003) (3.75,0.102415) (3.76,0.0669157) (3.77,-0.115325) (3.78,0.0203742) (3.79,0.0827026) (3.8,0.169488) (3.81,0.0973034) (3.82,0.135175) (3.83,0.111904) (3.84,0.186461) (3.85,0.0531343) (3.86,0.0910058) (3.87,-0.0178641) (3.88,0.325719) (3.89,0.0701074) (3.9,0.218035) (3.91,0.0602513) (3.92,-0.0241617) (3.93,-0.194174) (3.94,-0.192988) (3.95,-0.00837477) (3.96,-0.0561024) (3.97,-0.10383) (3.98,0.0930113) (3.99,-0.0403154) (4,-0.124728) (4.01,-0.184684) (4.02,-0.269097) (4.03,-0.108941) (4.04,-0.168898) (4.05,0.00348687) (4.06,0.0413584) (4.07,-0.0797399) (4.08,-0.127468) (4.09,-0.199652) (4.1,-0.0761814) (4.11,-0.221737) (4.12,-0.0493522) (4.13,0.06189) (4.14,0.136447) (4.15,0.0153485) (4.16,0.175504) (4.17,0.164462) (4.18,0.202334) (4.19,0.240205) (4.2,0.314762) (4.21,0.205892) (4.22,0.060337) (4.23,0.24495) (4.24,-0.0718036) (4.25,0.0883524) (4.26,0.101767) (4.27,0.298608) (4.28,0.458764) (4.29,0.129782) (4.3,0.0820547) (4.31,0.217754) (4.32,0.0477417) (4.33,0.0733848) (4.34,0.0867994) (4.35,0.124671) (4.36,-0.0575696) (4.37,0.249328) (4.38,0.2016) (4.39,0.459584) (4.4,0.411856) (4.41,0.278529) (4.42,0.536513) (4.43,0.562156) (4.44,0.379915) (4.45,0.564528) (4.46,0.541257) (4.47,0.322332) (4.48,0.372431) (4.49,0.226876) (4.5,0.460403) (4.51,0.498274) (4.52,0.548374) (4.53,0.329449) (4.54,0.575204) (4.55,0.564161) (4.56,0.602033) (4.57,0.652133) (4.58,0.628862) (4.59,0.361022) (4.6,0.521178) (4.61,0.497908) (4.62,0.315667) (4.63,0.475823) (4.64,0.562608) (4.65,0.637165) (4.66,0.711722) (4.67,0.272684) (4.68,0.127129) (4.69,0.226143) (4.7,0.264014) (4.71,0.19183) (4.72,0.290843) (4.73,0.561055) (4.74,0.537785) (4.75,0.428915) (4.76,0.35673) (4.77,0.370145) (4.78,0.38356) (4.79,0.409203) (4.8,0.581587) (4.81,0.533859) (4.82,0.486132) (4.83,0.536232) (4.84,0.43959) (4.85,0.245121) (4.86,0.38082) (4.87,0.259722) (4.88,0.309822) (4.89,0.262094) (4.9,0.495621) (4.91,0.39898) (4.92,0.351252) (4.93,0.291296) (4.94,0.182426) (4.95,0.342582) (4.96,0.197027) (4.97,0.22267) (4.98,0.248313) (4.99,0.225042) (5,0.336284) (5.01,0.2641) (5.02,0.460941) (5.03,0.43767) (5.04,0.475542) (5.05,0.415586) (5.06,0.294488) (5.07,0.454644) (5.08,0.333545) (5.09,0.310275) (5.1,0.274775) (5.11,0.324875) (5.12,-0.0285634) (5.13,0.0826788) (5.14,0.157236) (5.15,0.354077) (5.16,0.318578) (5.17,0.11188) (5.18,0.0519242) (5.19,-0.167002) (5.2,-0.153587) (5.21,-0.201315) (5.22,-0.102301) (5.23,0.155683) (5.24,0.14464) (5.25,0.0357705) (5.26,0.0369567) (5.27,-0.120827) (5.28,0.173842) (5.29,0.333998) (5.3,0.322956) (5.31,0.214086) (5.32,0.080759) (5.33,0.11863) (5.34,0.0586744) (5.35,0.157688) (5.36,0.403443) (5.37,0.539142) (5.38,0.405816) (5.39,0.492601) (5.4,0.457102) (5.41,0.666171) (5.42,0.593987) (5.43,0.558488) (5.44,0.315105) (5.45,0.145093) (5.46,0.21965) (5.47,0.220836) (5.48,0.332078) (5.49,0.162066) (5.5,0.199937) (5.51,0.286723) (5.52,0.165624) (5.53,0.362466) (5.54,0.131311) (5.55,0.108041) (5.56,0.292654) (5.57,-0.0363283) (5.58,-0.243026) (5.59,0.0761) (5.6,-0.0572267) (5.61,0.200757) (5.62,0.128572) (5.63,0.203129) (5.64,0.143173) (5.65,0.0709886) (5.66,0.0477179) (5.67,0.195645) (5.68,0.0378617) (5.69,0.112419) (5.7,0.0157772) (5.71,0.0414202) (5.72,0.299404) (5.73,-0.00512122) (5.74,-0.187362) (5.75,0.131764) (5.76,0.181864) (5.77,0.18305) (5.78,-0.170389) (5.79,0.160966) (5.8,-0.0335034) (5.81,0.0777388) (5.82,0.127839) (5.83,-0.127772) (5.84,-0.151043) (5.85,-0.235456) (5.86,-0.295412) (5.87,-0.220855) (5.88,-0.158527) (5.89,-0.316311) (5.9,-0.253982) (5.91,-0.118283) (5.92,0.00518744) (5.93,-0.103682) (5.94,0.0931589) (5.95,0.13103) (5.96,0.230044) (5.97,0.0967174) (5.98,-0.134437) (5.99,0.0990897) (6,-0.0464655) (6.01,0.0158629) (6.02,-0.105235) (6.03,-0.116278) (6.04,0.0438783) (6.05,0.204034) (6.06,0.20522) (6.07,0.402062) (6.08,0.354334) (6.09,0.526718) (6.1,0.540133) (6.11,0.467949) (6.12,0.652561) (6.13,0.360265) (6.14,0.483736) (6.15,0.228124) (6.16,0.400509) (6.17,0.548436) (6.18,0.0849415) (6.19,0.183955) (6.2,0.35634) (6.21,0.0640431) (6.22,0.273113) (6.23,0.0786438) (6.24,0.0920585) (6.25,0.203301) (6.26,0.18003) (6.27,0.0956169) (6.28,0.402514) (6.29,0.305873) (6.3,0.22146) (6.31,0.11259) (6.32,0.346117) (6.33,0.286161) (6.34,0.311804) (6.35,0.374132) (6.36,0.460917) (6.37,0.364276) (6.38,0.46329) (6.39,0.660131) (6.4,0.514576) (6.41,0.515762) (6.42,0.296836) (6.43,0.298022) (6.44,0.348122) (6.45,0.300395) (6.46,0.411637) (6.47,0.718534) (6.48,0.670806) (6.49,0.586393) (6.5,0.685407) (6.51,0.649908) (6.52,0.565495) (6.53,0.591138) (6.54,0.31107) (6.55,0.385627) (6.56,0.227843) (6.57,0.400228) (6.58,0.340272) (6.59,0.512656) (6.6,0.342644) (6.61,0.453886) (6.62,0.23496) (6.63,0.309517) (6.64,0.347389) (6.65,0.262976) (6.66,0.447588) (6.67,0.302033) (6.68,0.315448) (6.69,0.524518) (6.7,0.489018) (6.71,0.429062) (6.72,0.18568) (6.73,0.370292) (6.74,0.334793) (6.75,0.164781) (6.76,0.361623) (6.77,0.264981) (6.78,0.290624) (6.79,0.377409) (6.8,0.244083) (6.81,0.367553) (6.82,0.344283) (6.83,0.198727) (6.84,0.285513) (6.85,0.311156) (6.86,0.190057) (6.87,0.0445023) (6.88,0.0212316) (6.89,0.193616) (6.9,-0.0864521) (6.91,-0.183094) (6.92,-0.0229376) (6.93,-0.229635) (6.94,-0.21622) (6.95,-0.043836) (6.96,-0.164934) (6.97,-0.0659205) (6.98,-0.223704) (6.99,-0.00240596) (7,0.108836) (7.01,0.0977939) (7.02,0.196808) (7.03,0.039024) (7.04,0.113581) (7.05,0.102539) (7.06,0.0425825) (7.07,0.104911) (7.08,0.411808) (7.09,0.596421) (7.1,0.719892) (7.11,0.354224) (7.12,0.367639) (7.13,0.405511) (7.14,0.443382) (7.15,0.27337) (7.16,0.433526) (7.17,0.459169) (7.18,0.631553) (7.19,0.583826) (7.2,0.315986) (7.21,0.537284) (7.22,0.477328) (7.23,0.405144) (7.24,0.369645) (7.25,0.0162058) (7.26,0.408702) (7.27,0.385432) (7.28,0.44776) (7.29,0.400032) (7.3,0.462361) (7.31,0.744801) (7.32,0.660388) (7.33,0.722717) (7.34,0.674989) (7.35,0.688404) (7.36,0.457249) (7.37,0.397293) (7.38,0.594134) (7.39,0.729834) (7.4,0.584278) (7.41,0.450952) (7.42,0.51328) (7.43,0.575608) (7.44,0.662394) (7.45,0.675808) (7.46,0.640309) (7.47,0.494754) (7.48,0.703824) (7.49,0.460441) (7.5,0.498312) (7.51,0.585098) (7.52,0.598512) (7.53,0.795354) (7.54,0.600885) (7.55,0.565385) (7.56,0.701085) (7.57,0.592215) (7.58,0.752371) (7.59,0.88807) (7.6,1.13382) (7.61,0.963813) (7.62,0.854943) (7.63,1.05178) (7.64,0.942914) (7.65,1.09084) (7.66,0.859688) (7.67,0.665219) (7.68,0.764232) (7.69,0.789875) (7.7,0.656549) (7.71,0.841161) (7.72,0.548865) (7.73,0.733478) (7.74,0.746892) (7.75,0.833678) (7.76,0.798178) (7.77,0.640395) (7.78,0.519297) (7.79,0.557168) (7.8,0.362699) (7.81,0.632911) (7.82,0.634097) (7.83,0.867624) (7.84,0.660926) (7.85,0.625427) (7.86,0.663299) (7.87,0.737856) (7.88,0.861326) (7.89,0.984797) (7.9,0.985983) (7.91,0.913798) (7.92,0.914985) (7.93,1.014) (7.94,0.966271) (7.95,0.918543) (7.96,0.968643) (7.97,0.810859) (7.98,0.750903) (7.99,0.82546) (8,0.900017) (8.01,0.986802) (8.02,0.694506) (8.03,0.573408) (8.04,0.599051) (8.05,0.673607) (8.06,0.491367) (8.07,0.492553) (8.08,0.310313) (8.09,0.323727) (8.1,0.165944) (8.11,0.326099) (8.12,0.559626) (8.13,0.414071) (8.14,0.733197) (8.15,0.453129) (8.16,0.637741) (8.17,0.565557) (8.18,0.627885) (8.19,0.629071) (8.2,0.838141) (8.21,0.7415) (8.22,0.767143) (8.23,0.951756) (8.24,0.8918) (8.25,0.8563) (8.26,0.759659) (8.27,0.968729) (8.28,0.957687) (8.29,1.1423) (8.3,0.996744) (8.31,0.936788) (8.32,0.852375) (8.33,0.890247) (8.34,0.671321) (8.35,0.64805) (8.36,0.563637) (8.37,0.674879) (8.38,0.590466) (8.39,0.640566) (8.4,0.409412) (8.41,0.349456) (8.42,0.338414) (8.43,0.254001) (8.44,0.0717602) (8.45,0.170774) (8.46,0.147503) (8.47,0.0141765) (8.48,0.0153626) (8.49,0.151062) (8.5,0.0788772) (8.51,0.324632) (8.52,0.240219) (8.53,0.52266) (8.54,0.267049) (8.55,0.207092) (8.56,0.342792) (8.57,0.478491) (8.58,0.210651) (8.59,0.0650958) (8.6,0.164109) (8.61,0.458778) (8.62,0.374365) (8.63,0.204353) (8.64,0.43788) (8.65,0.622493) (8.66,0.611451) (8.67,0.392525) (8.68,0.381482) (8.69,0.174785) (8.7,-0.0319125) (8.71,0.0915582) (8.72,-0.0295401) (8.73,0.0205598) (8.74,-0.124995) (8.75,0.108531) (8.76,0.146403) (8.77,0.0986751) (8.78,0.0754044) (8.79,0.467901) (8.8,0.028863) (8.81,0.213476) (8.82,0.263576) (8.83,0.240305) (8.84,0.339319) (8.85,0.523932) (8.86,0.366148) (8.87,0.538532) (8.88,0.466348) (8.89,0.430849) (8.9,0.39535) (8.91,0.482135) (8.92,0.287666) (8.93,0.288852) (8.94,0.583521) (8.95,0.230082) (8.96,0.341324) (8.97,0.195769) (8.98,0.184727) (8.99,0.124771) (9,0.370526) (9.01,0.579596) (9.02,0.580782) (9.03,0.508597) (9.04,0.277443) (9.05,0.449827) (9.06,0.340957) (9.07,0.488885) (9.08,0.355558) (9.09,0.564628) (9.1,0.455758) (9.11,0.554772) (9.12,0.629329) (9.13,0.59383) (9.14,0.839585) (9.15,0.706258) (9.16,0.719672) (9.17,0.659716) (9.18,0.563075) (9.19,0.41752) (9.2,0.785559) (9.21,0.725603) (9.22,0.751246) (9.23,0.838032) (9.24,0.643563) (9.25,0.473551) (9.26,0.60925) (9.27,0.50038) (9.28,0.464881) (9.29,0.60058) (9.3,0.699593) (9.31,0.725236) (9.32,0.726423) (9.33,0.800979) (9.34,0.53314) (9.35,0.42427) (9.36,0.290943) (9.37,0.402185) (9.38,0.513428) (9.39,0.575756) (9.4,0.479115) (9.41,0.688184) (9.42,0.872797) (9.43,0.531587) (9.44,0.520545) (9.45,0.558416) (9.46,0.645201) (9.47,0.719758) (9.48,0.647574) (9.49,0.636531) (9.5,0.894515) (9.51,0.859016) (9.52,0.750146) (9.53,0.726875) (9.54,0.776975) (9.55,0.668105) (9.56,0.620378) (9.57,0.487051) (9.58,0.598293) (9.59,0.489423) (9.6,0.625122) (9.61,0.724136) (9.62,0.66418) (9.63,0.677595) (9.64,0.960035) (9.65,1.01013) (9.66,0.962407) (9.67,0.975822) (9.68,0.977008) (9.69,1.06379) (9.7,0.893781) (9.71,0.992795) (9.72,0.749412) (9.73,0.530486) (9.74,0.458302) (9.75,0.863027) (9.76,0.7297) (9.77,0.730886) (9.78,0.609788) (9.79,0.708802) (9.8,0.746673) (9.81,0.796773) (9.82,0.71236) (9.83,0.505663) (9.84,0.396793) (9.85,0.385751) (9.86,0.179053) (9.87,0.180239) (9.88,-0.0876003) (9.89,0.304896) (9.9,0.257169) (9.91,0.270583) (9.92,0.50411) (9.93,0.358555) (9.94,0.530939) (9.95,0.507668) (9.96,0.716738) (9.97,0.779066) (9.98,0.804709) (9.99,0.683611) (10,0.525828)};

\foreach \x in {0, 1, 2, 3, 4, 5, 6, 7, 8, 9, 10}
\draw[shift={(\x,0)}] (0,0) node[circle,fill,inner sep=0.5pt] {};

\foreach \y in {-1,0,1}
\draw[shift={(0,\y)}] (0,0) node[circle,fill,inner sep=0.5pt] {};

\end{tikzpicture}
\caption{Partial sums of $\mu^2(n)$ : depiction of \eqref{eq:broww} with $x = 2 \times 10^{15}$, $H = 44721359$ and $0 \leq t \leq 10$.} \label{Figure1}
\begin{tikzpicture}[yscale = 0.25]
\draw [blue, xshift=0cm] plot [smooth, tension=1] coordinates { (0.0,0.0)
(0.01,-0.0231961) (0.02,-0.172834) (0.03,0.267589) (0.04,0.202246) (0.05,0.0420715) (0.06,0.108438) (0.07,0.606814) (0.08,0.90499) (0.09,1.38756) (0.1,0.98504) (0.11,0.271683) (0.12,0.364392) (0.13,-0.701948) (0.14,0.391757) (0.15,0.00504126) (0.16,0.577175) (0.17,0.611931) (0.18,0.973328) (0.19,0.797348) (0.2,0.531806) (0.21,1.39897) (0.22,1.93422) (0.23,2.58011) (0.24,1.91417) (0.25,1.96474) (0.26,2.31033) (0.27,2.56109) (0.28,2.11115) (0.29,1.60853) (0.3,1.60114) (0.31,1.09852) (0.32,1.16489) (0.33,1.24179) (0.34,0.791853) (0.35,0.910904) (0.36,0.993076) (0.37,1.74433) (0.38,1.83704) (0.39,1.46613) (0.4,2.08041) (0.41,1.1089) (0.42,1.37547) (0.43,1.57355) (0.44,1.49767) (0.45,1.32169) (0.46,1.00346) (0.47,0.595672) (0.48,0.82009) (0.49,0.833773) (0.5,1.56396) (0.51,1.82526) (0.52,1.33317) (0.53,0.456495) (0.54,1.1498) (0.55,1.66398) (0.56,1.488) (0.57,1.53856) (0.58,1.12551) (0.59,0.765133) (0.6,0.668179) (0.61,0.750351) (0.62,0.437393) (0.63,1.18338) (0.64,0.949451) (0.65,0.694445) (0.66,0.745007) (0.67,1.40143) (0.68,1.70488) (0.69,1.5289) (0.7,2.3065) (0.71,2.59414) (0.72,1.88605) (0.73,2.06832) (0.74,2.04513) (0.75,1.55831) (0.76,2.55192) (0.77,1.7648) (0.78,2.04191) (0.79,2.5034) (0.8,2.48548) (0.81,2.09876) (0.82,1.63829) (0.83,1.63616) (0.84,2.30313) (0.85,3.1018) (0.86,2.7625) (0.87,2.46535) (0.88,2.79514) (0.89,2.361) (0.9,2.71187) (0.91,2.24086) (0.92,2.12283) (0.93,2.41574) (0.94,3.23022) (0.95,3.79708) (0.96,4.15848) (0.97,4.30914) (0.98,5.02352) (0.99,5.48502) (1,5.55665) (1.01,5.01715) (1.02,5.15728) (1.03,4.60197) (1.04,4.03086) (1.05,3.34385) (1.06,3.76846) (1.07,4.47758) (1.08,4.1172) (1.09,4.03605) (1.1,4.52916) (1.11,3.83688) (1.12,4.43009) (1.13,4.61763) (1.14,3.95695) (1.15,3.62292) (1.16,3.15718) (1.17,2.63875) (1.18,2.67351) (1.19,3.18769) (1.2,3.91788) (1.21,3.78404) (1.22,4.27188) (1.23,4.50684) (1.24,4.64696) (1.25,4.92407) (1.26,6.10206) (1.27,5.86813) (1.28,5.88182) (1.29,6.0957) (1.3,6.70471) (1.31,6.14941) (1.32,5.97869) (1.33,6.31375) (1.34,6.13777) (1.35,6.13565) (1.36,6.28631) (1.37,5.97862) (1.38,4.87013) (1.39,5.16831) (1.4,5.10823) (1.41,5.51705) (1.42,5.34633) (1.43,6.14501) (1.44,6.20611) (1.45,5.28201) (1.46,4.91637) (1.47,4.77727) (1.48,4.95954) (1.49,4.63605) (1.5,4.42846) (1.51,4.29989) (1.52,4.73505) (1.53,4.45897) (1.54,3.90893) (1.55,4.71288) (1.56,4.86881) (1.57,4.80873) (1.58,4.1744) (1.59,4.98361) (1.6,4.93934) (1.61,4.56843) (1.62,4.28708) (1.63,3.82661) (1.64,3.82449) (1.65,3.54841) (1.66,3.56209) (1.67,3.26494) (1.68,3.76331) (1.69,3.70324) (1.7,3.68004) (1.71,3.43031) (1.72,3.84439) (1.73,3.73163) (1.74,3.75585) (1.75,3.975) (1.76,3.57775) (1.77,2.95922) (1.78,2.62519) (1.79,2.69682) (1.8,2.779) (1.81,3.53026) (1.82,3.88638) (1.83,3.16249) (1.84,2.29635) (1.85,2.58399) (1.86,2.44489) (1.87,2.77994) (1.88,2.54601) (1.89,2.09608) (1.9,1.39852) (1.91,1.60714) (1.92,1.67351) (1.93,1.47118) (1.94,1.3953) (1.95,1.08235) (1.96,0.606067) (1.97,0.877901) (1.98,0.754606) (1.99,0.0939329) (2,0.223521) (2.01,0.843069) (2.02,0.941047) (2.03,1.34459) (2.04,1.36881) (2.05,1.48786) (2.06,1.08007) (2.07,0.703893) (2.08,0.132784) (2.09,0.488912) (2.1,0.607963) (2.11,0.0157798) (2.12,0.972506) (2.13,1.18112) (2.14,0.146391) (2.15,0.165342) (2.16,0.331809) (2.17,-0.218228) (2.18,-0.678701) (2.19,-1.04961) (2.2,-0.556504) (2.21,-1.38577) (2.22,-1.34574) (2.23,-1.32679) (2.24,-1.53438) (2.25,-1.68929) (2.26,-0.553433) (2.27,-0.529214) (2.28,-0.673583) (2.29,-1.08137) (2.3,-1.28369) (2.31,-1.06981) (2.32,-0.186843) (2.33,0.448511) (2.34,0.193506) (2.35,-0.767466) (2.36,-0.23748) (2.37,-0.676879) (2.38,-0.905543) (2.39,-1.31333) (2.4,-2.07937) (2.41,-2.43975) (2.42,-2.36284) (2.43,-2.80224) (2.44,-2.57782) (2.45,-3.19635) (2.46,-3.93078) (2.47,-3.49562) (2.48,-3.08681) (2.49,-2.28814) (2.5,-2.62744) (2.51,-1.92886) (2.52,-2.36299) (2.53,-1.99633) (2.54,-1.66654) (2.55,-2.21131) (2.56,-2.14494) (2.57,-2.25243) (2.58,-2.41788) (2.59,-1.55598) (2.6,-0.941699) (2.61,-1.24412) (2.62,-1.49386) (2.63,-1.63296) (2.64,-1.84582) (2.65,-1.18939) (2.66,-0.859602) (2.67,-1.11988) (2.68,-2.03343) (2.69,-2.18307) (2.7,-2.08509) (2.71,-3.28841) (2.72,-3.20097) (2.73,-3.25051) (2.74,-2.46764) (2.75,-2.53825) (2.76,-2.85648) (2.77,-3.23792) (2.78,-4.23051) (2.79,-5.07557) (2.8,-5.45702) (2.81,-5.6488) (2.82,-6.09874) (2.83,-6.05872) (2.84,-6.01342) (2.85,-6.20521) (2.86,-6.37065) (2.87,-5.85647) (2.88,-6.67519) (2.89,-6.67205) (2.9,-6.66363) (2.91,-7.22421) (2.92,-6.84174) (2.93,-6.91235) (2.94,-7.21477) (2.95,-7.68578) (2.96,-7.36653) (2.97,-7.56358) (2.98,-7.63419) (2.99,-7.97349) (3,-8.07045) (3.01,-7.73012) (3.02,-7.25282) (3.03,-6.97572) (3.04,-6.99891) (3.05,-6.62698) (3.06,-7.06111) (3.07,-7.58481) (3.08,-7.62908) (3.09,-6.8146) (3.1,-6.64286) (3.11,-7.04538) (3.12,-7.31619) (3.13,-7.371) (3.14,-6.98326) (3.15,-7.00119) (3.16,-7.272) (3.17,-6.7947) (3.18,-6.40169) (3.19,-6.09297) (3.2,-5.9107) (3.21,-5.29115) (3.22,-5.80958) (3.23,-6.05932) (3.24,-5.85597) (3.25,-5.34706) (3.26,-4.78546) (3.27,-5.20906) (3.28,-5.74329) (3.29,-4.87086) (3.3,-4.60429) (3.31,-4.79081) (3.32,-4.44521) (3.33,-4.853) (3.34,-4.85513) (3.35,-4.83091) (3.36,-4.70659) (3.37,-4.8931) (3.38,-4.23668) (3.39,-3.59605) (3.4,-4.1777) (3.41,-3.40537) (3.42,-3.50232) (3.43,-3.44122) (3.44,-2.93758) (3.45,-2.82379) (3.46,-2.58357) (3.47,-1.58997) (3.48,-1.77648) (3.49,-2.12632) (3.5,-2.1021) (3.51,-2.10422) (3.52,-1.56897) (3.53,-1.56582) (3.54,-1.32033) (3.55,-1.57533) (3.56,-2.13064) (3.57,-1.96417) (3.58,-1.3657) (3.59,-1.47846) (3.6,-0.653438) (3.61,-0.155063) (3.62,0.0482826) (3.63,0.657295) (3.64,0.249506) (3.65,0.721539) (3.66,0.398044) (3.67,0.506559) (3.68,1.04708) (3.69,0.565535) (3.7,0.674049) (3.71,0.66139) (3.72,1.08601) (3.73,1.67921) (3.74,1.59807) (3.75,1.6539) (3.76,1.09332) (3.77,1.28613) (3.78,1.12069) (3.79,1.40306) (3.8,1.83822) (3.81,2.08898) (3.82,2.10793) (3.83,1.04686) (3.84,0.628531) (3.85,0.842413) (3.86,1.25123) (3.87,1.89712) (3.88,1.67372) (3.89,2.46713) (3.9,1.99612) (3.91,2.44181) (3.92,2.04456) (3.93,2.14253) (3.94,2.093) (3.95,1.8696) (3.96,1.81479) (3.97,1.84428) (3.98,1.34693) (3.99,1.22364) (4,0.984435) (4.01,1.14563) (4.02,0.859017) (4.03,0.567133) (4.04,-0.272666) (4.05,0.246784) (4.06,0.581839) (4.07,1.08548) (4.08,0.577595) (4.09,0.422689) (4.1,0.504861) (4.11,-0.0557122) (4.12,-0.452965) (4.13,-0.97139) (4.14,-1.60572) (4.15,-1.23906) (4.16,-1.77329) (4.17,-1.90712) (4.18,-1.5826) (4.19,-1.36345) (4.2,-1.03893) (4.21,-0.566897) (4.22,-0.0632526) (4.23,-0.860904) (4.24,-0.394138) (4.25,-0.527971) (4.26,0.170605) (4.27,0.231703) (4.28,0.946083) (4.29,1.4708) (4.3,1.38965) (4.31,0.786933) (4.32,1.08511) (4.33,0.893324) (4.34,0.675198) (4.35,-0.417484) (4.36,0.0598185) (4.37,-0.785248) (4.38,-0.945422) (4.39,-0.868519) (4.4,-0.770541) (4.41,-1.33638) (4.42,-1.4386) (4.43,-1.33009) (4.44,-2.35428) (4.45,-2.35641) (4.46,-2.31638) (4.47,-2.36592) (4.48,-2.07828) (4.49,-2.53348) (4.5,-2.2037) (4.51,-2.33226) (4.52,-2.53458) (4.53,-2.30489) (4.54,-1.30075) (4.55,-0.918281) (4.56,-0.925672) (4.57,-1.34927) (4.58,-1.43041) (4.59,-0.162852) (4.6,-0.623325) (4.61,-0.830915) (4.62,-1.05431) (4.63,-0.287246) (4.64,-0.800403) (4.65,-0.365248) (4.66,-0.478007) (4.67,-0.464324) (4.68,-0.245174) (4.69,-0.0260231) (4.7,0.235275) (4.71,0.238421) (4.72,0.152004) (4.73,0.392228) (4.74,-0.263176) (4.75,0.108758) (4.76,0.528108) (4.77,0.194076) (4.78,-0.329618) (4.79,-0.610965) (4.8,0.103415) (4.81,-0.135785) (4.82,-0.222202) (4.83,-0.392913) (4.84,0.105463) (4.85,0.271929) (4.86,0.285612) (4.87,0.562715) (4.88,0.750255) (4.89,0.458371) (4.9,-0.0811283) (4.91,1.04419) (4.92,0.815524) (4.93,1.19273) (4.94,1.36973) (4.95,1.01989) (4.96,0.95455) (4.97,0.831254) (4.98,0.697422) (4.99,0.547784) (5,1.30958) (5.01,1.24424) (5.02,1.1947) (5.03,1.47707) (5.04,1.97018) (5.05,2.65822) (5.06,2.33999) (5.07,2.58548) (5.08,2.71507) (5.09,3.22398) (5.1,2.59492) (5.11,2.19767) (5.12,2.40628) (5.13,2.48319) (5.14,2.47053) (5.15,2.45787) (5.16,2.1133) (5.17,2.17967) (5.18,2.16701) (5.19,2.73387) (5.2,3.02678) (5.21,2.79285) (5.22,2.72224) (5.23,3.0731) (5.24,2.82863) (5.25,2.33655) (5.26,3.06146) (5.27,3.22266) (5.28,3.62094) (5.29,3.8717) (5.3,4.14353) (5.31,4.24151) (5.32,4.30788) (5.33,4.57444) (5.34,4.44061) (5.35,3.86423) (5.36,3.66718) (5.37,3.23305) (5.38,3.78938) (5.39,4.08755) (5.4,3.58493) (5.41,3.15607) (5.42,3.35942) (5.43,2.63552) (5.44,1.98539) (5.45,2.13078) (5.46,2.48164) (5.47,1.97375) (5.48,2.31407) (5.49,1.44267) (5.5,1.98319) (5.51,1.52798) (5.52,1.61016) (5.53,0.744017) (5.54,0.841994) (5.55,-0.250687) (5.56,-0.521498) (5.57,-0.560499) (5.58,-0.573158) (5.59,-0.106393) (5.6,0.423594) (5.61,-0.442546) (5.62,-0.797652) (5.63,-1.06846) (5.64,-0.638576) (5.65,-0.846166) (5.66,0.568912) (5.67,0.456153) (5.68,0.949261) (5.69,0.94187) (5.7,0.776427) (5.71,1.13782) (5.72,1.00926) (5.73,1.00187) (5.74,1.40015) (5.75,1.17148) (5.76,1.93855) (5.77,1.32002) (5.78,1.46015) (5.79,2.34838) (5.8,2.8573) (5.81,2.92366) (5.82,3.00584) (5.83,3.36723) (5.84,3.30716) (5.85,3.5105) (5.86,3.41355) (5.87,2.76342) (5.88,3.35662) (5.89,3.73382) (5.9,4.3165) (5.91,4.56199) (5.92,4.91812) (5.93,4.99502) (5.94,4.52928) (5.95,4.78004) (5.96,4.65148) (5.97,4.44389) (5.98,4.40489) (5.99,4.9296) (6,4.20571) (6.01,3.62933) (6.02,3.45335) (6.03,4.08344) (6.04,3.52287) (6.05,3.22571) (6.06,2.25947) (6.07,2.34691) (6.08,2.35533) (6.09,2.12666) (6.1,2.00337) (6.11,2.1593) (6.12,2.15191) (6.13,2.19193) (6.14,1.69985) (6.15,1.59763) (6.16,1.7957) (6.17,2.00432) (6.18,1.79146) (6.19,1.73139) (6.2,1.77141) (6.21,2.25398) (6.22,1.47214) (6.23,0.716632) (6.24,0.672363) (6.25,0.923124) (6.26,0.0464467) (6.27,0.0127141) (6.28,-0.0104818) (6.29,-0.565786) (6.3,-0.125363) (6.31,-0.411979) (6.32,0.0758605) (6.33,-0.584812) (6.34,-0.23922) (6.35,-1.17385) (6.36,-1.70281) (6.37,-2.16855) (6.38,-1.9336) (6.39,-1.8883) (6.4,-1.83247) (6.41,-1.34463) (6.42,-1.52588) (6.43,-2.0127) (6.44,-1.86204) (6.45,-2.32778) (6.46,-2.07702) (6.47,-2.18451) (6.48,-2.08653) (6.49,-2.34153) (6.5,-2.69664) (6.51,-2.69349) (6.52,-3.32256) (6.53,-3.10867) (6.54,-3.01596) (6.55,-3.10765) (6.56,-2.82528) (6.57,-3.57551) (6.58,-3.94642) (6.59,-3.87479) (6.6,-3.4449) (6.61,-3.78947) (6.62,-3.44915) (6.63,-3.27214) (6.64,-3.62198) (6.65,-4.09826) (6.66,-4.50078) (6.67,-4.32378) (6.68,-4.26795) (6.69,-4.01192) (6.7,-3.93501) (6.71,-3.87391) (6.72,-4.01301) (6.73,-3.50937) (6.74,-3.48515) (6.75,-2.92355) (6.76,-3.45252) (6.77,-3.78128) (6.78,-2.97734) (6.79,-3.27449) (6.8,-3.03953) (6.81,-2.84672) (6.82,-2.84885) (6.83,-2.76667) (6.84,-2.99007) (6.85,-3.01326) (6.86,-3.06807) (6.87,-2.69614) (6.88,-3.42003) (6.89,-3.03756) (6.9,-3.16612) (6.91,-2.68882) (6.92,-2.76997) (6.93,-3.02497) (6.94,-3.32739) (6.95,-3.27683) (6.96,-3.04715) (6.97,-3.20732) (6.98,-3.62565) (6.99,-3.4223) (7,-3.46657) (7.01,-3.34225) (7.02,-2.91236) (7.03,-2.57731) (7.04,-2.72168) (7.05,-2.53414) (7.06,-2.68904) (7.07,-3.44981) (7.08,-3.041) (7.09,-3.12215) (7.1,-2.99256) (7.11,-4.16954) (7.12,-4.74065) (7.13,-4.90609) (7.14,-4.9609) (7.15,-3.97256) (7.16,-3.85351) (7.17,-3.7766) (7.18,-3.81561) (7.19,-2.4058) (7.2,-2.57124) (7.21,-2.57863) (7.22,-3.11286) (7.23,-2.70405) (7.24,-2.35845) (7.25,-2.38692) (7.26,-2.08874) (7.27,-2.12247) (7.28,-1.41863) (7.29,-1.65256) (7.3,-2.33431) (7.31,-1.8412) (7.32,-2.41231) (7.33,-1.87179) (7.34,-2.00562) (7.35,-2.23955) (7.36,-1.97825) (7.37,-2.67054) (7.38,-2.13528) (7.39,-1.94774) (7.4,-2.74012) (7.41,-2.24175) (7.42,-0.895159) (7.43,-1.06587) (7.44,-1.30507) (7.45,-1.28612) (7.46,-1.67283) (7.47,-1.49056) (7.48,-1.66654) (7.49,-1.48427) (7.5,-1.62337) (7.51,-1.99955) (7.52,-1.02175) (7.53,0.0561508) (7.54,0.154128) (7.55,0.778946) (7.56,0.971754) (7.57,1.34896) (7.58,0.96751) (7.59,0.412206) (7.6,0.578672) (7.61,0.0286365) (7.62,0.395302) (7.63,1.273) (7.64,1.49742) (7.65,0.420546) (7.66,0.571208) (7.67,0.832506) (7.68,1.01478) (7.69,1.52369) (7.7,1.86928) (7.71,1.63535) (7.72,1.62796) (7.73,0.566889) (7.74,0.317152) (7.75,0.0568784) (7.76,-0.134906) (7.77,0.0947813) (7.78,0.0399751) (7.79,0.106342) (7.8,-0.433157) (7.81,1.10836) (7.82,0.737452) (7.83,-0.249861) (7.84,-0.0201736) (7.85,-0.0486378) (7.86,0.0651449) (7.87,0.521374) (7.88,0.772135) (7.89,0.833233) (7.9,0.678328) (7.91,0.597179) (7.92,0.510763) (7.93,0.287368) (7.94,0.559202) (7.95,0.298929) (7.96,0.354759) (7.97,1.50642) (7.98,0.945844) (7.99,1.20714) (8,1.04697) (8.01,0.776157) (8.02,0.684472) (8.03,0.434735) (8.04,0.353587) (8.05,0.388343) (8.06,0.955209) (8.07,1.29026) (8.08,1.8624) (8.09,1.76018) (8.1,0.920378) (8.11,0.359805) (8.12,0.215436) (8.13,0.68747) (8.14,0.411391) (8.15,0.419806) (8.16,0.870766) (8.17,0.368146) (8.18,0.0235776) (8.19,-0.0575706) (8.2,-0.507507) (8.21,-0.30943) (8.22,-0.295747) (8.23,-0.181964) (8.24,0.643053) (8.25,0.825325) (8.26,0.875887) (8.27,0.39434) (8.28,0.897984) (8.29,0.600832) (8.3,0.640857) (8.31,0.480682) (8.32,1.27409) (8.33,1.44056) (8.34,0.521732) (8.35,0.366826) (8.36,0.538561) (8.37,0.383655) (8.38,1.24028) (8.39,1.71232) (8.4,2.3424) (8.41,1.67646) (8.42,1.50048) (8.43,1.69329) (8.44,2.55519) (8.45,1.52573) (8.46,1.67639) (8.47,2.02198) (8.48,2.67314) (8.49,2.70263) (8.5,1.89971) (8.51,2.21369) (8.52,1.9903) (8.53,1.54036) (8.54,0.784856) (8.55,1.35699) (8.56,1.51292) (8.57,1.40016) (8.58,2.34108) (8.59,2.57604) (8.6,2.83734) (8.61,3.6044) (8.62,3.28617) (8.63,2.37789) (8.64,2.43899) (8.65,2.99531) (8.66,3.09329) (8.67,2.62228) (8.68,2.80982) (8.69,2.73921) (8.7,3.43252) (8.71,3.87294) (8.72,3.41774) (8.73,3.384) (8.74,3.9456) (8.75,3.82757) (8.76,4.02038) (8.77,3.48088) (8.78,3.59467) (8.79,3.24483) (8.8,3.78535) (8.81,4.02558) (8.82,4.19204) (8.83,3.67889) (8.84,4.53551) (8.85,5.03916) (8.86,4.29946) (8.87,4.20778) (8.88,4.27941) (8.89,4.41954) (8.9,4.30678) (8.91,4.62076) (8.92,4.35522) (8.93,4.05806) (8.94,4.27195) (8.95,4.20134) (8.96,4.04643) (8.97,3.58069) (8.98,3.25193) (8.99,3.19185) (9,2.86309) (9.01,2.81882) (9.02,3.5332) (9.03,4.05792) (9.04,4.20858) (9.05,4.34343) (9.06,4.83654) (9.07,4.65529) (9.08,5.41182) (9.09,5.29906) (9.1,5.25479) (9.11,6.03766) (9.12,6.63614) (9.13,6.1862) (9.14,6.83736) (9.15,6.17669) (9.16,6.7699) (9.17,6.13557) (9.18,6.59706) (9.19,6.85836) (9.2,6.76141) (9.21,6.73295) (9.22,7.60011) (9.23,7.91936) (9.24,8.07002) (9.25,7.25657) (9.26,7.28079) (9.27,7.23652) (9.28,7.1659) (9.29,6.57899) (9.3,6.42935) (9.31,6.42723) (9.32,5.94568) (9.33,5.7697) (9.34,5.57265) (9.35,4.84349) (9.36,5.13113) (9.37,4.90773) (9.38,4.63692) (9.39,4.18172) (9.4,3.94779) (9.41,3.84557) (9.42,4.2017) (9.43,4.34709) (9.44,3.40719) (9.45,3.48409) (9.46,4.21428) (9.47,4.08045) (9.48,3.92554) (9.49,5.16149) (9.5,5.10142) (9.51,4.57773) (9.52,4.83376) (9.53,4.75261) (9.54,5.47753) (9.55,4.01079) (9.56,3.77159) (9.57,3.69044) (9.58,3.37748) (9.59,3.06452) (9.6,2.69888) (9.61,2.59666) (9.62,2.76313) (9.63,2.66617) (9.64,1.89486) (9.65,1.27107) (9.66,0.910698) (9.67,0.629351) (9.68,-0.0365899) (9.69,0.145682) (9.7,0.0645339) (9.71,0.447005) (9.72,0.186731) (9.73,0.484908) (9.74,0.530202) (9.75,-0.0198342) (9.76,-0.116788) (9.77,0.808329) (9.78,0.885233) (9.79,0.535396) (9.8,0.412101) (9.81,0.120217) (9.82,0.481615) (9.83,0.990527) (9.84,0.572202) (9.85,0.427833) (9.86,0.768157) (9.87,0.41832) (9.88,0.526834) (9.89,0.656422) (9.9,0.533127) (9.91,0.64691) (9.92,0.492004) (9.93,-0.806145) (9.94,-1.26662) (9.95,-0.805121) (9.96,-0.696606) (9.97,-0.192962) (9.98,0.115752) (9.99,0.403392) (10,1.55505)};

\foreach \x in {0, 1, 2, 3, 4, 5, 6, 7, 8, 9, 10}
\draw[shift={(\x,0)}] (0,0) node[circle,fill,inner sep=0.5pt] {};

\foreach \y in {-8,-7,-6,-5,-4,-3,-2,-1,0,1,2,3,4,5,6,7,8}
\draw[shift={(0,\y)}] (0,0) node[circle,fill,inner sep=0.5pt] {};
\end{tikzpicture}
\caption{Partial sums of $\log p$ : depiction of \eqref{eq:browwprimes} with $x = 2 \times 10^{15}$, $H = 44721359$ and $0 \leq t \leq 10$.}\label{Figure2}
\end{figure}

\subsection{Fractional Brownian motion}
One notable feature of Theorems \ref{thm:main} and \ref{thm:main2} is that while the expected count of squarefrees in a short interval (or likewise arithmetic progression) is of order $H$, the variance of these counts is of order $H^{1/2}$. For many other natural arithmetic sequences (e.g. primes) one conjectures that the variance of counts is of the same order of magnitude as the expected value of counts.

That the variance is of order $H^{1/2}$ in Theorems \ref{thm:main} and \ref{thm:main2} speaks to the idea that the squarefree numbers are ``less random" than (for example) the primes (cf. \cite{cellarosi2013ergodic}). 
One may conjecture that higher moments are gaussian (see \cite{avdeeva2017ergodic} for numerical evidence). For $x$ drawn uniformly at random from $[X,2X]$, one may even make the stronger conjecture that the process
\begin{equation} \label{eq:broww}
t \mapsto \frac{1}{H^{1/4}} \sum_{x < n \leq x+tH} (\mu^2(n) - 1/\zeta(2))
\end{equation}
tends weakly, when suitably normalized by $H^{1/4}$, to a fractional Brownian motion with Hurst parameter $1/4$. See Figure \ref{Figure1} for an illustration of the evolution of the partial sums \eqref{eq:broww}. 
A formulation of this perspective seems to have been first made in \cite{grimmett1991asymptotics}. This is in contrast to the analogous process generated by prime-counting, where one may conjecture the appearance of Hurst parameter $1/2$ -- that is, usual Brownian motion. (See \cite{shevchenko2014fractional} for a survey on fractional Brownian motion.) The evolution of the process
\begin{equation} \label{eq:browwprimes}
t \mapsto \frac{1}{H^{1/2}} \sum_{x < p \leq x + t H} (\log p - 1).
\end{equation}
is depicted in Figure \ref{Figure2}. Both Figure \ref{Figure1} and Figure \ref{Figure2} depict the same range of parameters to make the comparison easier. The dots on Figure \ref{Figure1} and Figure \ref{Figure2} correspond to lattice points on the positive $x$-axis and on the (positive and negative) $y$-axis and indicate the difference in scales.

\subsection{Acknowledgments}
We would like to thank Bingrong Huang and Francesco Cellarosi for useful conversations, and the anonymous referees for their helpful comments. OG was supported by the European Research Council (ERC) under the European Union’s 2020 research and innovation programme (ERC grant agreement  n$^{\text{o}}$ 786758). KM was supported by Academy of Finland grant no. 285894. MR acknowledges partial support of a Sloan fellowship and of NSF Grant DMS-1902063. BR received partial support from NSF grant DMS-1854398 and an NSERC grant. Parts of this research were done during visits to Centre de Recherches Math\'{e}matiques and Oberwolfach and we thank these institutions for their hospitality.

\subsection{Conventions and Notations}
Throughout the rest of the paper we will allow the implicit constants in $\ll$ and $O(\cdot)$ to depend on $\varepsilon$. Furthermore the notation $n \sim N$ in the subscript of a sum will mean that $N \leq n < 2N$. 

\section{Proofs of Theorems \ref{thm:main} and \ref{thm:main2}}

We will show in this section how Theorems \ref{thm:main} and \ref{thm:main2} follow
from a number of technical propositions that are proven in Sections \ref{se:prop1}--\ref{se:prop2q}. 

The proof of Theorem \ref{thm:main} splits into two steps and depends on the identity
$$
\mu^2(m) = \sum_{nd^2 = m} \mu(d)
$$
and the following two propositions. 

\begin{proposition} \label{pr:prop1}
  Let $\varepsilon \in (0, \tfrac{1}{100})$ be given.
  Let $X \geq 1$ and $X^{\varepsilon} \leq H \leq X^{2/3 - \varepsilon}$. Let $H^{1+\varepsilon} \leq z \leq \min\{X/H^{1/2+\varepsilon}, H^{1/2-\varepsilon}X^{1/2}\}$. Then, as $X \rightarrow \infty$, 
  \begin{equation} \label{eq:prop1}  \frac{1}{X} \int_{X}^{2X} \Big | \sum_{\substack{x < n d^2 \leq x + H \\ d^2 \le z}} \mu(d) - H \sum_{d^2 \le z} \frac{\mu(d)}{d^2} \Big |^2 \, dx = C \sqrt{H} + O(H^{1/2-\varepsilon/10})
    \end{equation}
    with $C$ as in~\eqref{eq:C_dfn}.
\end{proposition}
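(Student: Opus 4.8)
\emph{Overview and set-up.} The plan is: reduce the left side of \eqref{eq:prop1} to a mean square of sawtooth sums, Fourier-expand, open the square, and split the result into a diagonal term producing $C\sqrt H$ and an off-diagonal term which must go into the error. Write $\psi(t)=\{t\}-\tfrac12$. Since $\#\{n:x<nd^2\le x+H\}=H/d^2+\psi(x/d^2)-\psi((x+H)/d^2)$ exactly, the quantity inside the absolute value in \eqref{eq:prop1} equals $\Delta(x):=\sum_{d^2\le z}\mu(d)\bigl(\psi(x/d^2)-\psi((x+H)/d^2)\bigr)$. Inserting the truncated Fourier expansion $\psi(t)=-\sum_{0<|k|\le K}\frac{e(kt)}{2\pi i k}+O\bigl(\min(1,\tfrac1{K\|t\|})\bigr)$ with $K$ a large fixed power of $X$, a Cauchy--Schwarz argument (using $\tfrac1X\int_X^{2X}\min(1,\tfrac1{K\|x/d^2\|})^2dx\ll 1/K$, valid since $d^2\le z<X$) shows the tail contributes $\ll z/K$ to $\tfrac1X\int_X^{2X}|\Delta(x)|^2dx$, which is negligible; so I may replace $\Delta(x)$ by $\sum_{d^2\le z}\mu(d)\sum_{0<|k|\le K}\frac{e(kx/d^2)(e(kH/d^2)-1)}{2\pi ik}$.

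\emph{Opening the square; the diagonal.} Squaring and integrating, $\tfrac1X\int_X^{2X}e\bigl((k_1/d_1^2-k_2/d_2^2)x\bigr)dx$ equals $1$ on the diagonal $k_1/d_1^2=k_2/d_2^2$ and is $\ll\min\bigl(1,(d_1d_2)^2/(X|k_1d_2^2-k_2d_1^2|)\bigr)$ off it. Parametrising the diagonal by $d_i=ga_i$ with $(a_1,a_2)=1$ and $k_i=a_i^2\ell$, using the classical identity $\sum_{\ell\ne0}\sin^2(\pi\ell\theta)/\ell^2=\pi^2\{\theta\}(1-\{\theta\})$ and $\sum_{(a_1,a_2)=1,\,(a_1a_2,g)=1}\mu(a_1)\mu(a_2)/(a_1a_2)^2=\prod_{p\nmid g}(1-2/p^2)$, the diagonal contribution becomes
\[
\prod_p\Bigl(1-\tfrac{2}{p^2}\Bigr)\sum_{g\ \mathrm{sqfree}}\frac{\{H/g^2\}\bigl(1-\{H/g^2\}\bigr)}{\prod_{p\mid g}(1-2/p^2)}\;+\;O\bigl(H^{1/2-\varepsilon/10}\bigr),
\]
where the error absorbs the truncations $d^2\le z$ and $|k|\le K$: extending the $\ell$-sum to all of $\mathbb Z$ costs a negligible amount for $K$ large, and extending the $g$-sum to all squarefree $g$ uses the hypothesis $z\ge H^{1+\varepsilon}$ and costs $\ll (H\log X)/\sqrt z\ll H^{1/2-\varepsilon/2}\log X$.

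\emph{Evaluating the arithmetic sum.} Put $w(g)=\mu^2(g)\prod_{p\mid g}(1-2/p^2)^{-1}$, $f(y)=\{y\}(1-\{y\})$, and decompose $f(y)=\tfrac16\mathbf 1_{y\ge1}+\phi(y)$; then $\sum_g w(g)f(H/g^2)=\tfrac16\sum_{g\le\sqrt H}w(g)+\frac1{2\pi i}\int_{(c)}\widehat\phi(s)H^{-s}D(s)\,ds$ for $-1<c<-\tfrac12$, where $D(s)=\sum_g w(g)g^{2s}=\zeta(-2s)\prod_p(1+w(p)p^{2s})(1-p^{2s})$ and $\widehat\phi$ is the Mellin transform of $\phi$, analytic in $-1<\Re s<1$. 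The summatory function of $w$ satisfies $\sum_{g\le t}w(g)=\kappa t+O(t^{1-\eta})$ with $\kappa=\prod_p(1+w(p)/p)(1-1/p)$, from the factorisation $\sum_g w(g)g^{-s}=\zeta(s)\cdot(\text{Euler product convergent past }\Re s=\tfrac12)$. Shifting the contour rightward past the simple pole of $D$ at $s=-\tfrac12$ (convergence handled by a standard Perron-type truncation) gives a main term $-\mathrm{Res}_{s=-1/2}\bigl[\widehat\phi(s)H^{-s}D(s)\bigr]=\tfrac\kappa2\,\widehat\phi(-\tfrac12)\sqrt H$ plus $O(H^{1/2-\delta})$. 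Integrating $f-\tfrac16=-B_2(\{\cdot\})$ by parts and invoking the functional equation in the form $\zeta(-\tfrac12)=-\zeta(\tfrac32)/(4\pi)$, one finds $\int_1^\infty\psi(y)y^{-1/2}dy=\tfrac13-\tfrac{\zeta(3/2)}{2\pi}$, hence $\widehat\phi(-\tfrac12)=1-4\int_1^\infty\psi(y)y^{-1/2}dy=\tfrac{2\zeta(3/2)}{\pi}-\tfrac13$; therefore $\sum_g w(g)f(H/g^2)=\kappa\bigl(\tfrac16+\tfrac12\widehat\phi(-\tfrac12)\bigr)\sqrt H+O(H^{1/2-\delta})=\kappa\,\tfrac{\zeta(3/2)}{\pi}\sqrt H+O(H^{1/2-\delta})$. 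Since $\prod_p(1-2/p^2)\cdot\kappa=\prod_p(1-3/p^2+2/p^3)$, the diagonal equals $C\sqrt H+O(H^{1/2-\varepsilon/10})$, with $C$ as in \eqref{eq:C_dfn}; this is where the factor $\zeta(3/2)$ enters.

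\emph{The off-diagonal (the main obstacle).} It remains to bound the off-diagonal by $O(H^{1/2-\varepsilon/10})$, and this is where the upper bounds $H\le X^{2/3-\varepsilon}$ and $z\le\min\{X/H^{1/2+\varepsilon},\,X^{1/2}H^{1/2-\varepsilon}\}$ are used (the diagonal is insensitive to them). Since $|e(kH/d^2)-1|\le 2$, each $k$-sum trivially costs $\ll\log X$. Grouping the remaining terms by $r:=k_1d_2^2-k_2d_1^2\ne0$, for fixed $d_1,d_2,r$ the number of admissible $(k_1,k_2)$ with $|k_i|\le K$ is $\ll 1+K\gcd(d_1,d_2)^2/\max(d_1,d_2)^2$, and each such term is weighted by $\min\bigl(1,(d_1d_2)^2/(X|r|)\bigr)$. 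Splitting the $r$-sum at a suitable threshold and summing dyadically over $d_1,d_2\le\sqrt z$: in the ``large $|r|$'' regime one gains from $(d_1d_2)^2/(X|r|)$, which after the $d$-summation is governed by $z^2/X\le X/H^{1+2\varepsilon}$ together with $H\le X^{2/3-\varepsilon}$; in the ``small $|r|$'' regime the $x$-integral gives no saving and one must instead exploit the scarcity of solutions of $k_1d_2^2-k_2d_1^2=r$, which is precisely where $z\le X^{1/2}H^{1/2-\varepsilon}$ is needed. I expect this last step --- the bookkeeping around $\gcd(d_1,d_2)$ and the balancing of the two constraints on $z$ against $H\le X^{2/3-\varepsilon}$ --- to be the main difficulty; everything else is either classical or a contour shift.
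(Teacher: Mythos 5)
Your overall strategy matches the paper's: Fourier-expand the sawtooth $\psi$, open the square, and separate the diagonal $k_1 d_2^2 = k_2 d_1^2$ from the off-diagonal. Your evaluation of the diagonal is correct and follows a route genuinely different in detail from the paper's Lemmas~\ref{le:asympt}--\ref{le:asymptS}: you invoke $\sum_{\ell\ne0}\sin^2(\pi\ell\theta)/\ell^2=\pi^2\{\theta\}(1-\{\theta\})$ to reduce to $\sum_g w(g)\{H/g^2\}(1-\{H/g^2\})$ and then run a Mellin argument, whereas the paper works with the sinc function directly via a contour shift and a Beurling--Selberg smoothing. I checked that your value $\widehat{\phi}(-1/2)=2\zeta(3/2)/\pi-1/3$ does recover the constant $C$, so this part, once the error terms are supplied, is fine.

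The off-diagonal is a genuine gap, not merely bookkeeping you defer. The grouping you propose, fixing $(d_1,d_2,r)$ and bounding the number of admissible $(k_1,k_2)$ by $\ll 1+K\gcd(d_1,d_2)^2/\max(d_1,d_2)^2$, overcounts by roughly a factor $H$ in the critical range $D_i\asymp\sqrt{z}$, $K_i\asymp D_i^2/H$, $|r|\lesssim(D_1D_2)^2/X$: summing your ``$+1$'' over the $\asymp D_1D_2$ pairs $(d_1,d_2)$ and the $\asymp(D_1D_2)^2/X$ admissible $r$ gives a raw count $\asymp (D_1D_2)^3/X$, which after multiplying by the weight $H^2/(D_1D_2)^2$ is $\asymp H^2 z/X$; this exceeds $H^{3/2-\varepsilon}$ for $z$ near $X/H^{1/2+\varepsilon}$, far above the required $H^{1/2-\varepsilon}$. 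The missing idea is exactly the Diophantine content of Lemmas~\ref{le:lat} and~\ref{le:lat2}: one must fix $(k_1,k_2)$ rather than $(d_1,d_2)$ and observe that the number of $(d_1,d_2)$ with $|k_1d_2^2-k_2d_1^2|$ small is governed by the continued fraction expansion of the quadratic irrational $\sqrt{k_2/k_1}$ (whose partial quotients are $\ll\sqrt{k_1k_2}$), yielding the crucial $(k_1k_2)^{1/4}$ term that recovers the factor $H$. One then also needs the separate dichotomy between $\sqrt{k_2/k_1}$ quadratic irrational (Lemma~\ref{le:lat2}) and rational, where $(k_1,k_2)=(m\ell_1^2,m\ell_2^2)$ and one factors $r_1^2-r_2^2$. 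Neither ingredient appears in your sketch, and without them the hypotheses $z\le X/H^{1/2+\varepsilon}$ and $z\le H^{1/2-\varepsilon}X^{1/2}$ cannot be brought to bear on the off-diagonal.
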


\begin{proposition} \label{pr:prop2}
  Let $\varepsilon \in (0, \tfrac{1}{100})$ be given. Let $X \geq 1$ and $X^{\varepsilon} \leq H \leq X^{4/7 - \varepsilon}$. Let $z \geq H^{4/3+\varepsilon}$. Then
  \begin{equation} \label{eq:prop2}
  \frac{1}{X} \int_{X}^{2X} \Big | \sum_{\substack{x < n d^2 \leq x + H \\ d^2 > z}} \mu(d) - H \sum_{\substack{2X \geq d^2 > z}} \frac{\mu(d)}{d^2} \Big |^2 dx \ll H^{1/2 - \varepsilon / 8}. 
  \end{equation}
  Assuming the Lindel\"of Hypothesis, the claim holds in the wider range $X^\varepsilon \leq H \leq X^{2/3-\varepsilon}$ and $z \geq H^{1+\varepsilon}$.
\end{proposition}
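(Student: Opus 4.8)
The plan is to convert the variance into a weighted mean value of $\zeta$ against a short Dirichlet polynomial. Put $B_z(w):=\sum_{\sqrt z<d\le\sqrt{3X}}\mu(d)d^{-w}$, so that the arithmetic function $n\mapsto\sum_{d^2\mid n,\,d^2>z}\mu(d)$ has Dirichlet series $\zeta(s)B_z(2s)$ (the truncation of $d$ at $\sqrt{3X}$ is forced by $nd^2\le x+H\le3X$ and is harmless). Applying Perron's formula to $\sum_{x<m\le x+H}\sum_{d^2\mid m,\,d^2>z}\mu(d)$, with a routine smoothing to make it rigorous, and shifting the contour to a vertical line $\mathrm{Re}(s)=\sigma$ with $\tfrac14\le\sigma\le\tfrac12$ (chosen to balance the growth of $\zeta$ against the saving in the Perron kernel), we cross only the simple pole of $\zeta$ at $s=1$, whose residue reproduces the subtracted main term $H\sum_{2X\ge d^2>z}\mu(d)/d^2$ up to a discrepancy $O(H/\sqrt X+1)$ absorbing the difference between the cut-offs $2X$ and $\sqrt{3X}$ and a boundary effect. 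We are left with
\[
\sum_{\substack{x<nd^2\le x+H\\ d^2>z}}\!\mu(d)\ -\ H\!\!\!\sum_{2X\ge d^2>z}\!\!\frac{\mu(d)}{d^2}\ =\ \frac1{2\pi}\int_{\mathbb R}\zeta(\sigma+it)\,B_z(2\sigma+2it)\,\frac{(x+H)^{\sigma+it}-x^{\sigma+it}}{\sigma+it}\,dt\ +\ O\!\Big(\tfrac H{\sqrt X}+1\Big).
\]
(Alternatively one may expand $\lfloor(x+H)/d^2\rfloor-\lfloor x/d^2\rfloor$ via a truncated Fourier series of the sawtooth and argue with exponential sums; the hypothesis $z\ge H^{1+\varepsilon}$, which forces every interval $(x/d^2,(x+H)/d^2]$ to have length $<1$, is what makes the boundary terms negligible in either approach.)

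Squaring, integrating over $x\in[X,2X]$, and analysing the arising kernel $\tfrac1X\int_X^{2X}\frac{(x+H)^{\sigma+it}-x^{\sigma+it}}{\sigma+it}\overline{(\cdots)(t')}\,dx$ by stationary phase (it is supported, up to negligible terms, on $|t-t'|\ll1$ with size $\asymp X^{2\sigma-1}\min(H^2/X,X/|t|^2)$ there, and the bulk of the $t$-integral comes from $X/H\ll|t|\ll X^{1+o(1)}$) reduces \eqref{eq:prop2}, up to $(\log X)^{O(1)}$, to a mean value bound for $\zeta(\sigma+it)B_z(2\sigma+2it)$ weighted by $X^{2\sigma-1}\min(H^2/X,X/t^2)$. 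Splitting $B_z$ into $O(\log X)$ dyadic blocks $\sum_{D<d\le2D}\mu(d)d^{-2\sigma-2it}$ with $\sqrt z\ll D\ll\sqrt X$, the contribution of the ``diagonal'' part of $|B_z(2\sigma+2it)|^2$ (the terms $d=d'$; equivalently the terms with $k/d^2=k'/d'^2$ in the Fourier picture) works out to $\ll H/\sqrt z\ll H^{1/2-\varepsilon/2}$, comfortably within budget. This diagonal is exactly the mechanism that produces the main term $C\sqrt H$ of Theorem~\ref{thm:main} and Proposition~\ref{pr:prop1}; here it is negligible precisely because $z\ge H^{1+\varepsilon}$ pushes $d$ past the critical scale $\sqrt H$, which is where that main term is generated.

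The real work is the off-diagonal. Assuming the Lindel\"of Hypothesis, $|\zeta(\tfrac12+it)|\ll_\varepsilon|t|^\varepsilon$ and the convexity bounds it forces on every vertical line, combined with the mean value theorem for Dirichlet polynomials applied to $B_z$ and standard mean square bounds for $B_z$ afforded by $z\ge H^{1+\varepsilon}$, suffice to control the weighted mean value, and keeping track of the exponents yields \eqref{eq:prop2} in the range $H\le X^{2/3-\varepsilon}$ with $z$ taken close to $H^{1+\varepsilon}$. Unconditionally one cannot dispose of $\zeta$ so cheaply; instead I would feed in the classical fourth moment $\int_0^T|\zeta(\tfrac12+it)|^4\,dt\ll T(\log T)^4$ (transported to the line $\sigma$ by the functional equation) via Cauchy--Schwarz together with the fourth moment of the dyadic blocks of $B_z$ (their squares being Dirichlet polynomials of length $\asymp D^2\le X$), and the loss of a power of $T$ relative to the Lindel\"of argument is exactly what restricts the unconditional bound to $H\le X^{4/7-\varepsilon}$ and $z\ge H^{4/3+\varepsilon}$.

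The main obstacle is this off-diagonal mean value at the extreme scales $D\asymp\sqrt X$ and $T\asymp X/H$: there $\zeta$ cannot be approximated by a short Dirichlet polynomial, the product $\zeta(\sigma+it)B_z(2\sigma+2it)$ genuinely behaves like a Dirichlet polynomial of conductor $\asymp T\cdot X$, and no elementary mean value theorem is strong enough -- one must invoke a bona fide subconvexity or fourth-moment input for $\zeta$, and the quality of that input is precisely what separates the unconditional exponent $X^{4/7}$ from the Lindel\"of exponent $X^{2/3}$. The remaining ingredients -- the smoothing in the Perron step, the dyadic decompositions, the stationary-phase estimates for the kernel, and the summations over $D$ and $T$ -- are routine, if lengthy.
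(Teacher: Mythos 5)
Your global strategy — contour integration to an integral of $\zeta(s)\cdot(\text{Dirichlet polynomial in }d)(2s)$ on or near the critical line, squaring, integrating over $x$, and reducing to a weighted mean value $\frac{H}{T}\int_{|t|\le T}|\zeta(\tfrac12+it)|^2|M(1+2it)|^2\,dt$ over $T\in[X/H,X^{O(1)}]$ — is indeed the paper's strategy, and your heuristics for why $z\ge H^{1+\varepsilon}$ kills the main term and why the range $T\asymp X/H$, $D\asymp\sqrt z$ is critical are all sound. (The paper passes from the additive short interval to the dilated one via a Saffari--Vaughan trick and then applies Plancherel directly, which is cleaner than your stationary-phase analysis of the kernel, but that is a presentational difference.) Your Lindel\"of argument is essentially the paper's: Lindel\"of on $\zeta$ plus the mean-value theorem for $M$ gives $\ll H/D + H/T \ll H^{1/2-\varepsilon}$ once $D\ge H^{(1+\varepsilon)/2}$ and $T\ge X/H$.

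However, there is a genuine gap in your unconditional argument. You propose to handle the unconditional case by ``Cauchy--Schwarz together with the fourth moment of the dyadic blocks of $B_z$.'' Run the numbers: Cauchy--Schwarz gives
$\frac{H}{T}\bigl(\int_{|t|\le T}|\zeta|^4\bigr)^{1/2}\bigl(\int_{|t|\le T}|M|^4\bigr)^{1/2}$, and since $M(1+2it)^2$ is a Dirichlet polynomial of length $D^2$ with $\ell^2$-mass $\asymp D^{-2}$, the second factor is $\ll ((T+D^2)/D^2)^{1/2}(\log)^{O(1)}$. At the critical scale $T\asymp X/H\le D^2$ this is $O((\log)^{O(1)})$, and the bound collapses to $\ll H/T^{1/2}\asymp H^{3/2}/X^{1/2}$ (up to logs), which is $\ll H^{1/2}$ only for $H\ll X^{1/2}$, not $X^{4/7}$. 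To reach $4/7$ the paper cannot use Cauchy--Schwarz with fourth moments on both factors; it splits the set of $t$ according to the size of $|M(1+2it)|$, applies \emph{Huxley's large-value theorem} (Lemma~\ref{le:LVT}, with its $G^3NV^{-6}$ term) to bound the measure of the level set, and then, in the regime where the first (mean-value) term $V^{-2}$ dominates that estimate, replaces the fourth moment of $\zeta$ by the \emph{Weyl subconvexity bound} $\zeta(\tfrac12+it)\ll t^{1/6+o(1)}$, obtaining $\ll H/T^{2/3-\varepsilon}$ and hence $H\le X^{4/7-\varepsilon}$. (The second regime, where the $V^{-6}$ term dominates, is where Cauchy--Schwarz plus the fourth moment of $\zeta$ is used, and it is this regime that forces $z\ge H^{4/3+\varepsilon}$.) Your sketch never invokes a large-value theorem for $M$ nor a pointwise subconvexity estimate for $\zeta$, and without them you cannot pass $X^{1/2}$; the phrase ``subconvexity or fourth-moment input'' in your closing paragraph treats as interchangeable two ingredients that must in fact be used in tandem, each in its own regime.
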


Under the assumption of the Lindel\"of Hypothesis, the above propositions cover all the possible values of $d^2$ for $X^{\varepsilon} \leq H \leq X^{2/3-\varepsilon}$. However, unconditionally they cover all the possible values of $d^2$ only for $X^{\varepsilon} \leq H \leq X^{6/(11+12\varepsilon)}$.
It would be possible to improve on the exponent $4/7$ in Proposition~\ref{pr:prop2}, but this would not help. Similarly it should be possible to prove Proposition \ref{pr:prop1} only with the condition $H^{1 + \varepsilon} \leq z \leq X / H^{1/2 + \varepsilon}$ by adapting the proof of Proposition \ref{prop:prop1q} below. 

We note that only the terms $d$ with $d^2 \in [H^{1 - \varepsilon}, H^{1 + \varepsilon}]$ contribute to the main term $C \sqrt{H}$ in Proposition \ref{pr:prop1}. 

Roughly speaking Proposition \ref{pr:prop1} depends only on ``convex" inputs such as a Fourier expansion and a point-counting lemma, whereas Proposition \ref{pr:prop2} exploits large value estimates of Huxley and subconvexity and fourth moment estimates for the Riemann zeta-function. 

\begin{proof}[Proof of Theorem \ref{thm:main} assuming Proposition \ref{pr:prop1} and Proposition \ref{pr:prop2}]
Let $\varepsilon \in (0, \tfrac{1}{100})$. If $H \leq X^{\varepsilon}$ then the result already follows from Hall's theorem. We can therefore assume that $H > X^{\varepsilon}$. 

For $H \in [X^\varepsilon, X^{6/11-\varepsilon}]$, take $z = \min\{X/H^{1/2+\varepsilon}, H^{1/2-\varepsilon}X^{1/2}\}$. Note that $z \geq H^{4/3+\varepsilon}$. Denoting by $\mathcal{I}_1$ the left-hand side of \eqref{eq:prop1} and by $\mathcal{I}_2$ the left-hand side of \eqref{eq:prop2}, we get, using Cauchy-Schwarz, that
$$
\frac{1}{X} \int_{X}^{2X} \Big | \sum_{\substack{x < n d^2 \leq x + H}} \mu(d) - H \sum_{d^2 \leq 2X} \frac{\mu(d)}{d^2} \Big |^2 dx = \mathcal{I}_1 + O(\sqrt{\mathcal{I}_1 \mathcal{I}_2} + \mathcal{I}_2).
$$
Using the bounds in \eqref{eq:prop1} and \eqref{eq:prop2}, we conclude that
\begin{equation}
\label{eq:truncclaim}
\frac{1}{X} \int_{X}^{2X} \Big | \sum_{\substack{x < n d^2 \leq x + H}} \mu(d) - H \sum_{d^2 \leq 2X} \frac{\mu(d)}{d^2} \Big |^2 dx = C \sqrt{H} + O(H^{1/2 - \varepsilon / 16}).
\end{equation}
Notice that the tail $H\sum_{d^2 > 2X} \mu(d)/d^2$ is $\ll H/\sqrt{X}$. Hence the claim reduces to showing that
\[
\frac{1}{X} \int_{X}^{2X} \Big | \sum_{\substack{x < n d^2 \leq x + H}} \mu(d) - H \sum_{d^2 \leq 2X} \frac{\mu(d)}{d^2} \Big | \cdot \frac{H}{\sqrt{X}} + \Bigl(\frac{H}{\sqrt{X}}\Bigr)^2 dx \ll H^{1/2 - \varepsilon / 16}.
\]
Applying Cauchy-Schwarz and~\eqref{eq:truncclaim} we see that the left hand side
 is $\ll H^{1/4}(H/\sqrt{X}) + H^2/X \ll H^{1/2-\varepsilon/16}$ since $H \leq X^{2/3 - \varepsilon}$. 
\end{proof}

Likewise the proof of Theorem \ref{thm:main2} splits into two steps and depends on the following propositions.

\begin{proposition} \label{prop:prop1q}
Let $\varepsilon \in (0, \tfrac{1}{100})$.  Let $q$ be prime with $x^{1/3 + 30 \varepsilon} \leq q \leq x^{1 - \varepsilon}$ and let $(x/q)^{1 + \varepsilon} \leq z \leq x^{-\varepsilon} \cdot \sqrt{qx}$. Then 
    \begin{equation}
      \frac{1}{\varphi(q)}\sum_{(a,q) = 1} \Big | \sum_{\substack{d^2 n \leq x , \ d^2 < z \\ d^2 n \,\equiv\, a \Mod{q}}} \mu(d) - \frac{1}{\varphi(q)} \sum_{\substack{d^2 n \leq x, \ d^2 < z \\ (d^2 n,q)=1}} \mu(d) \Big |^2 = C \sqrt{x /q} + O\left((x/q)^{1/2-\varepsilon/16}\right)
  \end{equation}
      with $C$ as in~\eqref{eq:C_dfn}.
    \end{proposition}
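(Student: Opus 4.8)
\textbf{Proof proposal for Proposition~\ref{prop:prop1q}.}

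The plan is to mirror the proof of Proposition~\ref{pr:prop1}, which treats the short-interval analogue, replacing Fourier expansion on the circle by an expansion into additive (or Dirichlet) characters modulo $q$. First I would open the square. Writing $\Delta(a) := \sum_{d^2 n \le x,\, d^2 < z,\, d^2 n \equiv a \pmod q} \mu(d) - \frac{1}{\varphi(q)} \sum_{d^2 n \le x,\, d^2 < z,\, (d^2n,q)=1} \mu(d)$, the left-hand side is $\frac{1}{\varphi(q)} \sum_{(a,q)=1} |\Delta(a)|^2$, and by orthogonality of Dirichlet characters modulo the prime $q$ this equals $\frac{1}{\varphi(q)} \sum_{\chi \ne \chi_0} \bigl| \sum_{d^2 n \le x,\, d^2 < z} \mu(d) \chi(d^2 n) \bigr|^2$ (the $\chi_0$ term being exactly the subtracted main term, up to the coprimality bookkeeping, which is harmless because $q$ is prime and $d^2<z<x$). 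Since $\chi$ is completely multiplicative, $\chi(d^2 n) = \chi(d)^2 \chi(n)$, so the inner sum factors as $\sum_{d^2 < z} \mu(d) \chi(d)^2 \sum_{n \le x/d^2} \chi(n)$. One then uses the standard bound for character sums $\sum_{n \le y} \chi(n)$, namely that it is $\ll \sqrt q \log q$ (P\'olya--Vinogradov) but, more usefully, its mean square over $\chi \ne \chi_0$ is essentially $\varphi(q) \cdot \min(y, q)$ plus lower-order terms: precisely, for $y \le x$ one has $\frac{1}{\varphi(q)}\sum_{\chi \ne \chi_0} |\sum_{n\le y}\chi(n)|^2$ concentrated on $\approx \{y\}$-type contributions once $y$ is large compared to $q$, and this is where the $\sqrt{x/q}$ main term originates (coming only from $d$ with $d^2$ near $x/q$, paralleling the remark in the paper that only $d^2 \in [H^{1-\varepsilon}, H^{1+\varepsilon}]$ contributes).

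The key steps, in order: (1) character expansion and multiplicativity as above; (2) expand the square to get a double sum over $d_1, d_2$ with $d_1^2, d_2^2 < z$ of $\mu(d_1)\mu(d_2)$ times $\frac{1}{\varphi(q)}\sum_{\chi\ne\chi_0} \chi(d_1)^2 \overline{\chi(d_2)}^2 \bigl(\sum_{n\le x/d_1^2}\chi(n)\bigr)\bigl(\overline{\sum_{m\le x/d_2^2}\chi(m)}\bigr)$; (3) open each character sum over $n,m$ and apply orthogonality over $\chi$ to collapse to a counting condition $d_1^2 n \equiv d_2^2 m \pmod q$, with $(nm, q)=1$ and the diagonal $\chi_0$ contribution removed — this produces a point-counting problem analogous to the lattice-point count in Proposition~\ref{pr:prop1}; (4) extract the main term: the contribution of pairs with $d_1 = d_2$ and $n \equiv m$, i.e. $n = m$ up to multiples of $q$, which after summing $\mu(d)^2$-weighted over $d$ with the count $\approx x/(d^2 q)$ of valid $n$ in each residue class, and using $\zeta(3/2)$ and the local factors, yields $C \prod_{p|q}(1+2/p)^{-1}\sqrt{x/q}$ — wait, here I would be careful: the statement of Proposition~\ref{prop:prop1q} has main term exactly $C\sqrt{x/q}$ with no Euler factor at $q$, because $q$ is prime and $z$ is bounded well below $qx$, so the $d$ with $q \mid d$ do not reach the relevant range; the extra Euler factor in Theorem~\ref{thm:main2} will appear only after combining with the complementary proposition handling $d^2 \ge z$. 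So at this stage I expect the clean main term $C\sqrt{x/q}$ after evaluating $\sum_{d : d^2 \approx x/q} \mu^2(d)/d^{?}$ against the smooth count, exactly as in the proof of Proposition~\ref{pr:prop1}; (5) bound the off-diagonal terms $d_1 \ne d_2$ or $n \not\equiv m$: these involve either $q \mid (d_1^2 n - d_2^2 m)$ with $d_1^2 n \ne d_2^2 m$, forcing $|d_1^2 n - d_2^2 m| \ge q$, which is a genuine saving since the total "mass" $x$ of the sum over $n$ is not much larger than $q \cdot (x/q)$; one bounds the number of such solutions by a divisor-type estimate and checks the total is $O((x/q)^{1/2 - \varepsilon/16})$, using the upper constraints $z \le x^{-\varepsilon}\sqrt{qx}$ and $q \ge x^{1/3+30\varepsilon}$ to absorb error terms. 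The constraint $z \le x^{-\varepsilon}\sqrt{qx}$ is precisely what guarantees $d_i^2 < z$ forces $x/d_i^2 > x^\varepsilon \sqrt{x/q} \gg$ the relevant threshold, keeping the character sums long relative to $q$; and $z \ge (x/q)^{1+\varepsilon}$ ensures the full main-term range $d^2 \approx x/q$ is captured.

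The hard part will be step (5), the off-diagonal analysis, and in particular handling the range where $d_1^2$ and $d_2^2$ are both small (so $x/d_i^2$ is close to $x$, much larger than $q$), where one must extract genuine cancellation from $\sum_{d^2 < z} \mu(d)\chi(d)^2$ rather than treating it trivially. Here, as the paper signals, one needs inputs beyond convexity: I would use the fourth-moment / large-value machinery for Dirichlet $L$-functions (the $q$-analogue of Huxley's large values estimate and the subconvexity bound for $L(1/2+it,\chi)$) to control $\frac{1}{\varphi(q)}\sum_{\chi\ne\chi_0}|\sum_{d^2<z}\mu(d)\chi(d)^2|^2 \cdot |\text{(long character sum)}|^2$ after a Perron-type or truncated-Mellin reduction converting the sharp cutoffs $n \le x/d^2$ into integrals of $L(s,\chi) y^s/s$. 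This is the genuinely number-theoretic input and the place where the exponent $1/3+30\varepsilon$ is forced. The remainder — the main-term evaluation, the passage between additive and multiplicative characters, and the bookkeeping of the Euler factors — is routine and parallels Proposition~\ref{pr:prop1} closely; I would cite that argument wherever possible rather than repeat it. Throughout, since $q$ is prime, the local arithmetic at $q$ is trivial, which simplifies the main-term constant relative to the composite case.
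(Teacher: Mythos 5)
Your high-level skeleton — orthogonality of characters over $a \pmod q$, factoring $\chi(d^2n) = \chi(d)^2\chi(n)$, and a split into a diagonal main term plus an off-diagonal counting problem — matches the opening moves of the paper's proof. The Euler-factor observation (that the extra factor in Theorem~\ref{thm:main2} is not visible in Proposition~\ref{prop:prop1q} because $q$ is prime and large) is also correct. However, the proposal has a genuine gap: you have misidentified the central technical ingredient.

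The paper states explicitly, just after the propositions are introduced, that Proposition~\ref{prop:prop1q} depends ``only on `convex' inputs: Poisson summation and results on integer solutions to binary quadratic forms with positive discriminant,'' while it is the \emph{complementary} Proposition~\ref{prop:prop2q} that uses hybrid large-value estimates, subconvexity for $L(s,\chi)$, and hybrid fourth moments. You have reversed this: your step (5) reaches for the hybrid large-value and subconvexity machinery, treating it as the heart of the argument and as the source of the threshold $q > x^{1/3+30\varepsilon}$, whereas the actual proof never appeals to large values or subconvexity for $L(1/2+it,\chi)$. The two concrete ideas you are missing are: (a) \emph{Poisson summation} applied to the long $n$-sum after a smoothing $f(nd^2/x)$ is inserted (Proposition~\ref{prop:asympt}, with a companion upper bound, Proposition~\ref{prop:smoothing}, via P\'olya's formula, to strip the smoothing); this turns $\sum_{n \le x/d^2}\chi(n)$ into a \emph{short} dual sum of length $\approx d^2q/x$, which is what makes the counting problem tractable — without it the congruence $d_1^2 n \equiv d_2^2 m \pmod q$ with $n,m$ up to $\approx x$ has far too many solutions; and (b) after Poisson, the off-diagonal set is the complement of $\mathcal{M} = \{(k_1^2m, k_2^2m)\}$ (not $d_1 = d_2$ as you suggest), and its size is controlled in the hard range $D_1D_2 < x^{1+2\varepsilon}/q$ by counting integer solutions of $n_1 d_1^2 - n_2 d_2^2 = q\ell$ via the theory of primary representations by binary quadratic forms of positive discriminant (the cited results of Williams), combined with a divisor bound; this step, not any $L$-function input, is where the constraint $q > x^{1/3+30\varepsilon}$ is forced. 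The easier range $D_1D_2 \ge x^{1+2\varepsilon}/q$ is handled by the hybrid fourth moment (Lemma~\ref{le:hybridFourth}), which is the only $L$-function input and is still a convex (mean-value) bound. Finally, your suggestion that ``one must extract genuine cancellation from $\sum_{d^2<z}\mu(d)\chi(d)^2$'' is not what happens: in this proposition $\mu(d)$ is treated essentially trivially (it is bounded by $\mu^2(d)$ via a majorant principle in Proposition~\ref{prop:smoothing}), and in fact the contribution of small $d \le x^{1/2-\varepsilon/6}/\sqrt q$ is negligible after Poisson, rather than being the dangerous range you describe.
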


\begin{proposition} \label{prop:prop2q}
Let $\varepsilon \in (0, \tfrac{1}{100})$. Let $x \geq 1$ and $x^{3/7 + \varepsilon} \leq q \leq x^{1 - \varepsilon}$. Let $z \geq (x/q)^{4/3+\varepsilon}$. Then
  \begin{equation}
  \label{eq:prop2qclaim}
  \frac{1}{\varphi(q)}\sum_{(a,q) = 1} \Big | \sum_{\substack{d^2 n \leq x, \ d^2 \geq z \\ d^2 n \,\equiv\, a \Mod{q}}} \mu(d) - \frac{1}{\varphi(q)} \sum_{\substack{d^2 n \leq x, \ d^2 \geq z \\ (d^2 n, q) = 1}} \mu(d) \Big |^2 \ll (x / q)^{1/2 - \varepsilon / 8}. 
  \end{equation}
  Assuming the Generalized Lindel\"of Hypothesis, the claim holds in the wider range $x^{1/3+\varepsilon} \leq q \leq x^{1-\varepsilon}$ and $z \geq (x/q)^{1+\varepsilon}$.
\end{proposition}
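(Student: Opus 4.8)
\section*{Proof strategy for Proposition~\ref{prop:prop2q}}

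The plan is to follow the strategy of Proposition~\ref{pr:prop2}, with Dirichlet $L$-functions in place of the Riemann zeta-function and the exact orthogonality of characters in place of the average over $x$. I would begin by opening the square: writing the inner difference in~\eqref{eq:prop2qclaim} as $\frac{1}{\varphi(q)}\sum_{\chi\ne\chi_0}\overline{\chi}(a)S(\chi)$, where
\[
S(\chi):=\sum_{\substack{d^2n\le x,\ d^2\ge z}}\mu(d)\chi(d^2n)=\sum_{\sqrt z\le d\le\sqrt x}\mu(d)\chi(d)^2\sum_{n\le x/d^2}\chi(n),
\]
orthogonality turns the left-hand side of~\eqref{eq:prop2qclaim} into $\frac{1}{\varphi(q)^2}\sum_{\chi\ne\chi_0}|S(\chi)|^2$ (the centering term $\frac{1}{\varphi(q)}\sum_{(d^2n,q)=1}\mu(d)$ being precisely the $\chi_0$-contribution, hence automatically absent). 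Since $L(s,\chi)$ is entire for $\chi\ne\chi_0$, I would then evaluate $\sum_{n\le x/d^2}\chi(n)$ by a (smoothed) truncated Perron formula and shift the contour to $\mathrm{Re}(s)=1/2$, so that $S(\chi)=\frac{1}{2\pi i}\int_{1/2-iT}^{1/2+iT}L(s,\chi)\,M_\chi(s)\,\frac{x^s}{s}\,ds+(\text{admissible error})$ with $M_\chi(s):=\sum_{\sqrt z\le d\le\sqrt x}\mu(d)\chi(d)^2d^{-2s}$ a Dirichlet polynomial in $d$. Imprimitive characters are handled by the usual factorization $L(s,\chi)=L(s,\chi^\star)\prod_{p\mid q}(1-\chi^\star(p)p^{-s})$.

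Splitting $M_\chi$ into $O(\log x)$ dyadic pieces $M_\chi^{(D)}$ supported on $d\sim D$ with $\sqrt z\le D\le\sqrt x$, applying Cauchy--Schwarz in $t$ against the integrable weight $(1/4+t^2)^{-1}$, and splitting the $t$-integral dyadically as well, the whole quantity is bounded in terms of the hybrid mean values $\sum_{\chi\bmod q}|L(\tfrac12+it,\chi)|^2\,|M_\chi^{(D)}(\tfrac12+it)|^2$. These I would estimate by the same package of tools used for Proposition~\ref{pr:prop2}, translated to the $q$-aspect: the (hybrid) fourth moment of Dirichlet $L$-functions combined with Cauchy--Schwarz in $\chi$ and the large sieve applied to $\sum_\chi|M_\chi^{(D)}|^4$ (the twist by $\chi^2$ is harmless, $\chi\mapsto\chi^2$ being boundedly-to-one); Burgess-type subconvexity for $L(\tfrac12+it,\chi)$ together with the orthogonality bound $\sum_\chi|M_\chi^{(D)}(\tfrac12+it)|^2\ll\varphi(q)(1+D/q)D^{-1}$; and large-values (Hal\'asz--Montgomery) estimates for the character sums $M_\chi^{(D)}$, the $q$-analogue of Huxley's large-value input. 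The ``small'' blocks gain decisively from $D\ge\sqrt z\ge(x/q)^{2/3+\varepsilon/2}$; the delicate range is $D$ close to $\sqrt x$ (equivalently $d^2$ close to $x$, where the $n$-sum is very short), where the large-values estimates are essential. Optimising the three inputs over the dyadic ranges of $D$ and $t$ -- exactly as the analogous optimisation yields $H\le X^{4/7-\varepsilon}$ in Proposition~\ref{pr:prop2} -- produces the bound $(x/q)^{1/2-c\varepsilon}$ in the range $q\ge x^{3/7+\varepsilon}$. Under the Generalized Lindel\"of Hypothesis, $|L(\tfrac12+it,\chi)|\ll(q(1+|t|))^{o(1)}$ renders the subconvexity and large-values steps unnecessary, and the orthogonality bound for $\sum_\chi|M_\chi^{(D)}|^2$ alone gives the conclusion in the wider range $q\ge x^{1/3+\varepsilon}$, $z\ge(x/q)^{1+\varepsilon}$.

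The main obstacle I expect is the large-$D$ regime together with the precise exponent bookkeeping: the polynomial $M_\chi^{(D)}$ has length up to $\sqrt x$, which can be considerably larger than $q$, so the mean value over $\chi$ is no longer diagonal and one must extract genuine cancellation from $L(\tfrac12+it,\chi)$ via large-values methods, balancing several estimates whose ranges of effectiveness overlap only on the stated interval $q\ge x^{3/7+\varepsilon}$. A secondary, more technical point is keeping the Perron step uniform in $d$ and $\chi$, since $x/d^2$ ranges over a wide interval as $d$ varies in $[\sqrt z,\sqrt x]$, so the truncation and smoothing errors must be controlled simultaneously for all $d$ and all characters.
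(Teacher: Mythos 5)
Your overall strategy — orthogonality to pass to $\frac{1}{\varphi(q)^2}\sum_{\chi\neq\chi_0}|S(\chi)|^2$, Perron and contour shift to $\Re s=\tfrac12$ to produce $L(\tfrac12+it,\chi)\,M(1+2it,\chi^2)$, dyadic decomposition in $D$ and $T$, then a three-pronged estimation by hybrid fourth moment, hybrid large-value bounds for the Dirichlet polynomial $M$, and subconvexity for $L$ — is exactly the paper's route, and the treatment under GLH (Lindel\"of plus the hybrid mean-value theorem) is also the same.

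There is, however, one concrete gap: you invoke \emph{Burgess-type} subconvexity for $L(\tfrac12+it,\chi)$, whereas the exponent $q\geq x^{3/7+\varepsilon}$ in the statement hinges on having \emph{Weyl-strength} hybrid subconvexity $L(\tfrac12+it,\chi)\ll_\varepsilon (q|t|)^{1/6+\varepsilon}$ (Petrow--Young, Lemma~\ref{le:hybrid} in the paper). In the large-value regime where $|S_{T,q}(V)|\ll V^{-2}$ dominates, the contribution is roughly $\frac{x}{\varphi(q)}\cdot\frac{1}{T}\cdot|L|^2$; with the Weyl bound $|L|^2\ll (qT)^{1/3+o(1)}$ this yields $\ll x/q^{2/3-o(1)}$, which is $\ll\sqrt{qx}$ precisely when $q\geq x^{3/7+\varepsilon}$. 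With Burgess in the $q$-aspect ($|L(\tfrac12,\chi)|\ll q^{3/16+\varepsilon}$) this piece becomes $\ll x/q^{5/8-o(1)}$ and only gives $q\geq x^{4/9+\varepsilon}$, a strictly weaker range. Since the duality $x/q\leftrightarrow H$ pairs $q\geq x^{3/7}$ with $H\leq X^{4/7}$ (Proposition~\ref{pr:prop2}, which uses Weyl's $|\zeta(\tfrac12+it)|\ll|t|^{1/6+o(1)}$), one needs a $q$-aspect subconvexity bound of the same strength $1/6$; Burgess's $3/16$ does not suffice. Swapping in the hybrid Weyl bound (Lemma~\ref{le:hybrid}) repairs the argument and is the only substantive change needed.
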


The proof of Proposition \ref{prop:prop1q} depends once again only on ``convex'' inputs: in this case Poisson summation and results on integer solutions to binary quadratic forms with positive discriminant. However the proof of Proposition \ref{prop:prop1q} is more intricate than that of Proposition \ref{pr:prop1} due to a number of technical issues.
The proof of Proposition \ref{prop:prop2q} is similar to the proof of Proposition \ref{pr:prop2} and uses hybrid versions of Huxley's large value estimates, subconvexity estimates for $L(s, \chi)$ and a hybrid fourth moment estimate.

The deduction of Theorem \ref{thm:main2} from the above two proposition is identical to the deduction of Theorem \ref{thm:main} from Proposition \ref{pr:prop1} and Proposition \ref{pr:prop2}. The only difference is that we use the result of Nunes to handle the case when $q > x^{1 - \varepsilon}$ and we notice that for prime $q$,
\begin{multline*}
\frac{1}{\varphi(q)} \sum_{\substack{d^2 n \leq x \\ (d^2 n, q) = 1}} \mu(d) = \frac{1}{\varphi(q)} \sum_{\substack{d^2\leq x \\ (d, q) = 1}} \mu(d) \Big(\Big\lfloor \frac{x}{d^2}\Big\rfloor - \Big\lfloor \frac{x}{qd^2}\Big\rfloor \Big) \\
= \frac{x}{q} \sum_{\substack{d^2 \leq x \\ (d,q)=1}} \frac{\mu(d)}{d^2} + O\Big(\frac{\sqrt{x}}{q}\Big) = \frac{6}{\pi^2} \frac{x}{q} \Big(1-\frac{1}{q^2}\Big)^{-1} + O\Big(\frac{\sqrt{x}}{q}\Big) 
\end{multline*}
and the total error term incurred is $x / q^2$ which is $\leq x^{-\varepsilon} \sqrt{x/ q}$ for $q > x^{1/3 + \varepsilon}$.

Theorem \ref{thm:main_upper} depends upon similar principles as Propositions \ref{pr:prop2} and \ref{prop:prop2q}. We prove Theorem \ref{thm:main_upper} in section \ref{sec:GRH_bounds}.

Finally let us make a few remarks on the bottleneck that prevents us from pushing our result further. Taking $H = X^{6/11}$, we are unable to show the following estimate,
$$
\frac{1}{X} \int_{X}^{2X} \Big | \sum_{\substack{x \leq n d^2 \leq x + H \\ d^2 \sim X^{8/11}}} \mu(d) - H \sum_{d^2 \sim X^{8/11}} \frac{\mu(d)}{d^2}\Big |^2 dx \ll_{A} \frac{\sqrt{H}}{\log^{A} X} 
$$
Specifically opening $\mu(d)$ using Heath-Brown's identity (see \cite{HBV}) the only situation that we are not able to estimate is the one in which $\mu(d)$ is replaced by two smooth sums of equal length. Roughly speaking this corresponds to estimating,
$$
\frac{1}{X} \int_{X}^{2X} \Big | \sum_{\substack{x \leq n a^2 b^2 \leq x + H \\ a, b \sim X^{2/11}}} 1 - H \sum_{a, b \sim X^{2/11}} \frac{1}{a^2 b^2} \Big |^2 dx \ll \frac{\sqrt{H}}{\log^{A} X}
$$
Opening the above expression into Dirichlet polynomials this is roughly equivalent to
$$
\int_{|t| \leq X^{5/11}} \Big | \sum_{n \sim X^{3/11}} \frac{1}{n^{1/2 + it}} \sum_{a \sim X^{2/11}} \frac{1}{a^{1/2 + 2 it}} \sum_{b \sim X^{2/11}} \frac{1}{b^{1/2 + 2 it}} \Big |^2 dt \ll \frac{X^{6/11}}{\log^{A} X}
$$
Applying the functional equation on the Dirichlet polynomial over $n$, and setting $Y = X^{12/11}$ we then see that obtaining the above estimate is equivalent to showing that,
$$
\int_{|t| \leq Y^{5/12}} \Big | \sum_{n \sim Y^{1/6}} \frac{1}{n^{1/2 + it}} \sum_{a \sim Y^{1/6}} \frac{1}{a^{1/2 + 2 it}} \sum_{b \sim Y^{1/6}} \frac{1}{b^{1/2 + 2 it}} \Big |^2 dt \ll \frac{Y^{1/2}}{\log^{A} Y}. 
$$ 
If the $2 i t$ in the Dirichlet polynomial over $a$ and $b$ were replaced by $i t$ then we would be facing exactly the same bottleneck as in the case of improving Huxley's prime number theorem in short intervals (by a variant of the computations in \cite{HBV}, see also \cite[Chapter 7]{Harman}). In particular to make further progress we either need to find a way to improve Huxley's estimate or find a way to exploit the fact that the phases in two of the Dirichlet polynomials are $2 it $ and not $it$. Unfortunately we do not see how to make progress on either of these questions. 

\section{Lemmas}

\subsection{Dirichlet polynomials and $L$-functions}
Let us first collect some standard results on large values of Dirichlet polynomials and $L$-functions.
\begin{lemma}[Large-value theorem]
\label{le:LVT}
Let $N, T \geq 1$ and $V > 0$. Let $F(s) = \sum_{n \leq N} a_n n^{-s}$ be a Dirichlet polynomial and let $G = \sum_{n \leq N} |a_n|^2$. Let $\mathcal{T}$ be a set of $1$-spaced points $t_r \in [-T, T]$ such that $|F(it_r)| \geq V$. Then
\[
|\mathcal{T}| \ll (GNV^{-2} + T \min\{GV^{-2}, G^3 N V^{-6}\})(\log 2NT)^6
\]
\end{lemma}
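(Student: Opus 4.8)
The plan is to prove the large‑value estimate in Lemma~\ref{le:LVT} by the classical mean‑value plus duality method, combined with Halász–Montgomery. First I would recall the trivial mean‑value bound: for a $1$‑spaced set $\mathcal T\subset[-T,T]$,
\[
\sum_{r}|F(it_r)|^2 \ll (N + T)\, G\,(\log 2NT),
\]
which follows from the standard Montgomery–Vaughan / large sieve inequality applied to the Dirichlet polynomial $F$ evaluated on a well‑spaced set of points in an interval of length $2T$. Since each point contributes at least $V^2$, this immediately gives $|\mathcal T| \ll (N+T) G V^{-2}(\log 2NT)$, which already yields the first term $GNV^{-2}$ and the weaker branch $TGV^{-2}$ of the $\min$. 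The bulk of the work is to extract the stronger branch $T G^3 N V^{-6}$, which wins when $V$ is large relative to $\sqrt G$.

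For that branch I would use the Halász–Montgomery inequality. Writing $R = |\mathcal T|$ and using duality, there are complex numbers $c_r$ with $\sum_r |c_r|^2 = 1$ such that
\[
V R^{1/2} \le \Bigl| \sum_r c_r F(it_r) \Bigr| \le \Bigl( \sum_{n\le N} |a_n|^2 \Bigr)^{1/2} \Bigl( \sum_{n\le N} \Bigl| \sum_r c_r n^{-it_r} \Bigr|^2 \Bigr)^{1/2} = G^{1/2}\Bigl( \sum_{r,s} c_r \overline{c_s} \sum_{n \le N} n^{i(t_s - t_r)} \Bigr)^{1/2}.
\]
Estimating the inner sum over $n$ by partial summation / the standard bound $\sum_{n\le N} n^{iu} \ll \min(N, N^{1/2} + \|u\|^{-1})$ type inequality (more precisely $\ll \min\{N, 1/|u|\}$ for $|u|\le 1$ after a dyadic decomposition, plus $N$ on the diagonal), and then bounding the resulting bilinear form in the $c_r$ by the spacing of the $t_r$, one gets
\[
V^2 R \ll G\bigl( N R + R^2 \cdot (\text{off-diagonal contribution}) \bigr)(\log 2NT).
\]
Here the off‑diagonal contribution, summing $\min\{N,1/|t_r-t_s|\}$ over $1$‑spaced points in $[-T,T]$, is $\ll (\log 2NT)$ per fixed $r$ after using $1$‑spacing — but this only reproduces the mean value bound. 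To do better one iterates: apply Halász–Montgomery not to $F$ but to $F^k$ for a suitable integer $k$, so that $F^k$ is a Dirichlet polynomial of length $N^k$ with $L^2$‑norm $\ll G^k (\log)^{O(k)}$ by the divisor bound, and with $|F^k(it_r)| \ge V^k$. Optimizing $k$ (taking $k$ so that $N^k$ is comparable to $T$, i.e.\ the two terms in the resulting bound balance) produces the $T G^3 N V^{-6}$ shape; this is the familiar Huxley‑type optimization and the power $6$ in $V$ (and $3$ in $G$) comes precisely from the $k=3$ regime dominating the relevant range of parameters.

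The main obstacle, and the step requiring the most care, is the combinatorial/divisor bookkeeping when passing to powers $F^k$: one must control $\sum_{n} |b_n|^2$ where $b_n$ are the coefficients of $F^k$, which by Cauchy–Schwarz and the divisor function is $\le G^k d_k(n)$‑type and contributes the $(\log 2NT)^6$ factor (the exponent $6$ in the statement already signals that $k$ up to $3$ or so, squared inside an $L^2$, is what is being used). I would handle this by invoking the standard estimate $\sum_{n\le M} d_k(n)^2 \ll M (\log M)^{k^2-1}$ and tracking constants loosely, since the lemma only claims a fixed power of the logarithm. A secondary technical point is the interplay between the "diagonal" term $GNV^{-2}$ and the "$T$" terms: rather than optimizing $k$ globally one simply takes the minimum of the bounds obtained for $k=1$ (giving $TGV^{-2}$) and $k=3$ (giving $TG^3NV^{-6}$) and adds the unconditional diagonal term $GNV^{-2}$, which is exactly the form stated. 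Everything here is classical (it is essentially Huxley's large‑values theorem, cf.\ Montgomery's \emph{Topics in Multiplicative Number Theory} or Ivić's book), so I would present it compactly and cite the standard reference rather than reproving the mean value and Halász–Montgomery inequalities from scratch.
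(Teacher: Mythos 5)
Your overall plan matches the paper exactly: the paper proves Lemma~\ref{le:LVT} simply by citing the mean-value theorem for Dirichlet polynomials and Huxley's large-value theorem (Iwaniec--Kowalski, Theorem~9.7 and Corollary~9.9), and you likewise identify the result as ``mean-value $+$ Huxley'' and say you would cite the standard references. On that level the proposal is correct.

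However, the explanatory sketch you give of \emph{why} Huxley's bound has the form $T G^3 N V^{-6}$ is off. You attribute it to powering, i.e.\ applying Hal\'asz--Montgomery to $F^k$ with $k$ chosen so that $N^k\asymp T$ and ``the $k=3$ regime dominating.'' This does not reproduce the stated bound. If one applies the mean-value/Hal\'asz machinery to $F^3$, whose coefficients have $\ell^2$-norm $\ll G^3(\log N)^{O(1)}$, the resulting estimate has the shape $R \ll (N^3 + T)\,G^3 V^{-6}(\log)^{O(1)}$: the $T$-term is $T G^3 V^{-6}$ (missing the factor $N$), while the length term is $N^3 G^3 V^{-6}$, which is \emph{always} at least as large as the desired $G N V^{-2}$ because $V^2\le GN$. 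One cannot simply keep the $T$-term from $k=3$ and the $N$-term from $k=1$; each value of $k$ carries both its own $N$-term and $T$-term. The actual mechanism behind Huxley's theorem is the subdivision of $[-T,T]$ into $\asymp T\,(G\log/V^2)^2$ subintervals of length $T_0\asymp(V^2/(G\log))^2$; on each subinterval Hal\'asz--Montgomery is effective and gives at most $\ll GNV^{-2}\log$ points, so summing over the $\asymp TG^2V^{-4}\log^2$ subintervals produces $TG^3NV^{-6}\log^3$. The exponents $3$ on $G$ and $6$ on $V$ come from squaring $T_0^{-1}\asymp (G/V^2)^2$, not from a cube power of $F$. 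Since you ultimately defer to the literature (as does the paper), this does not break the proposal, but the stated rationale for the shape of Huxley's bound should be corrected.
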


\begin{proof}
This follows from the mean-value theorem and Huxley's large value theorem, see e.g. \cite[Theorem 9.7 and Corollary 9.9]{iwaniec2004analytic}.
\end{proof}

We will say that a set of tuples $(t, \chi)$ with $\chi$ a Dirichlet character and $t$ a real number is \textit{well-spaced} whenever it holds that if $(t, \chi) \neq (u, \chi')$ then either $\chi \neq \chi'$ or $|t - u| \geq 1$. 

\begin{lemma}[Hybrid large-value theorem]
  \label{le:hybridLVT}
  Let $q \in \mathbb{N}$, $N, T \geq 1$ and $V > 0$. Let $F(s, \chi) = \sum_{n \leq N} a_n \chi(n) n^{-s}$ be a Dirichlet polynomial, and let $G = \sum_{n \leq N} |a_n|^2$. Let $\mathcal{T}$ be a set of well-spaced tuples $(t_r, \chi)$ with $t_r \in [-T, T]$ and  with $\chi$ a primitive character of modulus $q$ such that $|F(i t_r, \chi)| \geq V$. Then
  $$
  |\mathcal{T}| \ll (G N V^{-2} + q T \min \{ G V^{-2}, G^3 N V^{-6} \}) \cdot (\log 2qNT)^{18}. 
  $$
\end{lemma}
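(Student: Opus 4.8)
The plan is to deduce the hybrid large-value theorem (Lemma~\ref{le:hybridLVT}) from the classical large-value theorem (Lemma~\ref{le:LVT}) by combining the mean-value estimate for Dirichlet polynomials twisted by characters with Huxley's large-value machinery in the hybrid setting. First I would recall the two standard inputs. The first is the \emph{hybrid mean-value estimate}: for a Dirichlet polynomial $F(s,\chi) = \sum_{n \le N} a_n \chi(n) n^{-s}$ and a well-spaced set $\mathcal{T}$ of tuples $(t_r, \chi)$ with $t_r \in [-T,T]$ and $\chi$ primitive of modulus $q$, one has
\[
\sum_{(t_r,\chi) \in \mathcal{T}} |F(it_r, \chi)|^2 \ll (N + qT) \cdot G \cdot (\log 2qNT)^{c}
\]
for some absolute constant $c$ (here $G = \sum_n |a_n|^2$); this follows from the large sieve / the duality principle for the family $\{\chi(n) n^{-it}\}$, exactly as in the classical case but with $T$ replaced by $qT$ because the conductor of the family is of size $qT$ rather than $T$. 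Applying this to the subset of $\mathcal{T}$ on which $|F| \ge V$ immediately gives the bound $|\mathcal{T}| \ll (N + qT)GV^{-2}(\log 2qNT)^{c}$, which accounts for the first term $GNV^{-2}$ and for the middle piece $qT \cdot GV^{-2}$.

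The harder term is $qT \cdot G^3 N V^{-6}$, which is the hybrid analogue of Huxley's large-value bound. The plan here is to follow Huxley's argument (as presented in \cite[Ch.~9]{iwaniec2004analytic}) essentially verbatim, tracking how the modulus $q$ enters. The key step in Huxley's proof is to form, for each pair of points in $\mathcal{T}$, the product $F(it_r)\overline{F(it_s)}$, expand it as a Dirichlet polynomial of length $N^2$, and estimate the resulting double sum using a divisor-type bound plus the mean-value estimate; the ``gain'' comes from the near-diagonal terms where $|t_r - t_s|$ is small. In the hybrid setting one instead considers $F(it_r,\chi)\overline{F(it_s,\chi')}$. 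Because the set is well-spaced, two distinct tuples either have $\chi \ne \chi'$ or $|t_r - t_s| \ge 1$; when $\chi \ne \chi'$ the product $\chi \overline{\chi'}$ is a nontrivial character and one uses the large sieve inequality over the full family of characters and shifts, which is why $T$ gets upgraded to $qT$ throughout. Carrying Huxley's iteration through with this bookkeeping yields $|\mathcal{T}| \ll qT \cdot G^3 N V^{-6} (\log 2qNT)^{c'}$, and combining with the mean-value bound gives the stated estimate with the power $18$ on the logarithm absorbing all the various $(\log 2qNT)^{O(1)}$ losses accumulated along the way.

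I expect the main obstacle to be the careful treatment of the well-spacing condition in the hybrid Huxley argument: one must ensure that in the double-sum step the contribution of tuples with the \emph{same} character but well-separated $t$-values is controlled by the $T$-aspect argument, while the contribution of distinct characters is controlled by the large sieve in the $q$-aspect, and that these two regimes interact correctly when one iterates Huxley's reflection/doubling procedure. A clean way to organize this is to appeal directly to a hybrid large-value theorem already in the literature (for instance the version in work of Huxley or in \cite[\S9.5, exercises]{iwaniec2004analytic}, or Jutila-type hybrid density estimates), in which case the proof reduces to checking that the hypotheses match and citing the result; alternatively one writes $F(it_r,\chi) = \sum_n a_n \psi_r(n)$ with $\psi_r(n) := \chi(n) n^{-it_r}$ and observes that the key property used in Huxley's proof — namely that $\sum_n \psi_r(n)\overline{\psi_s(n)}$ over a dyadic block is small unless $\psi_r = \psi_s$ — holds for this family with the relevant scale being $qT$, so the entire argument goes through mutatis mutandis. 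Either way, no genuinely new idea is needed beyond bookkeeping; the content is entirely in the classical Lemma~\ref{le:LVT} and the hybrid large sieve.
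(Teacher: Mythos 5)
Your proposal is correct and matches the paper's approach: the paper's entire proof is the citation ``\cite[Theorems 9.16 and 9.18 with $k=q$ and $Q=1$]{iwaniec2004analytic}'', which are precisely the hybrid mean-value theorem (yielding the $GNV^{-2}$ and $qT\cdot GV^{-2}$ pieces) and the hybrid Huxley large-value theorem (yielding the $qT\cdot G^3NV^{-6}$ piece) that you identify and, at the end of your sketch, recommend citing directly. The extra bookkeeping you describe for carrying Huxley's argument through in the hybrid setting is exactly what those theorems of Iwaniec--Kowalski already encapsulate, so no further work is needed.
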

\begin{proof}
This follows e.g. from \cite[Theorems 9.16 and 9.18 with $k = q$ and $Q = 1$]{iwaniec2004analytic}.
\end{proof}

\begin{lemma}[Fourth moment estimate]
\label{le:ZetaFourth}
  Let $T \geq 2$. Then
  $$
  \int_{|t| \leq T} |\zeta(\tfrac 12 + it)|^4 dt \ll T (\log T)^4. 
  $$
\end{lemma}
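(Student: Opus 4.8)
The plan is to establish the classical Ingham bound $\int_{0}^{T} |\zeta(\tfrac12+it)|^4\,dt \ll T(\log T)^4$ and then symmetrize over $\pm t$. First I would observe that on the critical line the functional equation gives $|\zeta(\tfrac12+it)|^2 = \chi(\tfrac12+it)^{-1}\cdot|\zeta(\tfrac12+it)|^2$ trivially, so the real content is an approximate formula for $\zeta(\tfrac12+it)^2$. The standard route is to write, via the approximate functional equation for $\zeta(s)^2$ (whose Dirichlet coefficients are $d(n)$, the divisor function), something of the shape
\begin{equation*}
\zeta(\tfrac12+it)^2 = \sum_{n \le X} \frac{d(n)}{n^{1/2+it}} + (\text{dual sum of comparable length}) + O(\text{small}),
\end{equation*}
with $X \asymp t$ (so $X \asymp T$ after dyadic decomposition of the $t$-range). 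Then $\int_{0}^{T}|\zeta(\tfrac12+it)|^4\,dt$ is controlled by $\int_{0}^{T}\bigl|\sum_{n\le X} d(n) n^{-1/2-it}\bigr|^2 dt$ plus the contribution of the dual term, which is handled identically.

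Next I would apply the mean value theorem for Dirichlet polynomials (the same input cited for Lemma~\ref{le:LVT}, e.g.\ \cite[Theorem~9.1]{iwaniec2004analytic}): for $F(s)=\sum_{n\le X} a_n n^{-s}$ one has $\int_{0}^{T}|F(\tfrac12+it)|^2 dt = \sum_{n \le X}\frac{|a_n|^2}{n}(T + O(n))$. With $a_n = d(n)$ and $X \asymp T$ this yields a main term $\ll T \sum_{n\le T} d(n)^2/n$ and an off-diagonal/error term $\ll \sum_{n \le T} d(n)^2 \ll T(\log T)^3$. The key elementary estimate is $\sum_{n \le T} d(n)^2/n \ll (\log T)^4$, which follows from $\sum_{n\le Y} d(n)^2 \sim c\,Y(\log Y)^3$ and partial summation. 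Both contributions are therefore $\ll T(\log T)^4$. Summing the dyadic pieces $t \in [T/2,T], [T/4,T/2],\dots$ costs only another logarithm's worth of terms but each is smaller by the same factor, so the geometric sum is still $\ll T(\log T)^4$; finally the range $|t|\le 2$ contributes $O(1)$ and the reflection $t \mapsto -t$ uses $|\zeta(\tfrac12-it)| = |\zeta(\tfrac12+it)|$.

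The main obstacle, such as it is, is the bookkeeping in the approximate functional equation: one must check that the length of the main sum can be taken $\asymp t$ (rather than, say, $t^2$, which would be too long for the mean value theorem to give the right power of $\log$) and that the error term in the approximate functional equation, after integrating in $t$, contributes $o(T(\log T)^4)$. This is entirely classical — it is Ingham's 1926 theorem — so in practice I would simply cite a standard reference such as \cite[Theorem~5.1]{iwaniec2004analytic} or Titchmarsh's book, rather than reproducing the argument; the lemma is stated here only to fix the constant in the exponent of the logarithm for later use in the large-value arguments of Propositions~\ref{pr:prop2} and~\ref{prop:prop2q}.
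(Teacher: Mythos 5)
Your plan is correct, and it ultimately converges on the same approach as the paper: the paper simply cites Titchmarsh \cite[formula (7.6.1)]{Titchmarsh86} for this classical Ingham theorem, and you explicitly say you would do the same. Your sketch of the underlying argument (approximate functional equation for $\zeta(s)^2$ with coefficients $d(n)$ and length $\asymp t$, mean value theorem for Dirichlet polynomials, $\sum_{n\le T}d(n)^2/n\ll(\log T)^4$, dyadic summation, symmetry in $t\mapsto -t$) is the standard one and is sound, apart from the garbled throwaway remark about $|\zeta|^2$ and $\chi$ in the first paragraph, which does no work and could simply be deleted.
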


\begin{proof} 
See e.g. \cite[formula (7.6.1)]{Titchmarsh86}.
\end{proof}

\begin{lemma} [Hybrid fourth moment estimate]
\label{le:hybridFourth}
  Let $T \geq 2$ and $q \geq 2$. Then
  $$
  \sum_{\chi} \int_{|t| \leq T} |L(\tfrac 12 + it, \chi)|^4 dt \ll T \varphi(q) \log (T q)^4,
  $$
  where the sum is over all characters modulo $q$.
\end{lemma}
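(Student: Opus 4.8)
\emph{Reduction to primitive characters.} This is a classical estimate, which may be quoted, e.g., from Montgomery's \emph{Topics in Multiplicative Number Theory} or derived via the standard treatment of the fourth moment using the approximate functional equation; here I outline the plan. Each $\chi \bmod q$ is induced by a unique primitive character $\chi^\ast$ of modulus $q^\ast \mid q$, with $L(s,\chi) = L(s,\chi^\ast)\prod_{p\mid q}(1-\chi^\ast(p)p^{-s})$; on the line $\mathrm{Re}\,s = \tfrac12$ the finite product has modulus $\le \prod_{p\mid q}(1+p^{-1/2})$. Since exactly one character mod $q$ is induced by each primitive $\chi^\ast$ of modulus $q^\ast\mid q$, grouping by $q^\ast$ reduces the claim to the primitive fourth moment bound $\sum_{\chi \bmod q^\ast}^{\ast}\int_{-T}^{T}|L(\tfrac12+it,\chi)|^4\,dt \ll \varphi(q^\ast)\,T\,(\log q^\ast T)^4$, the Euler correction factors summing over $q^\ast\mid q$ to an admissible power of $\log q$.

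\emph{Approximate functional equation and hybrid mean value.} For primitive $\chi \bmod q^\ast$ and $|t|\le T$, the approximate functional equation for $L(s,\chi)^2$ gives
\[
L(\tfrac12+it,\chi)^2 = \sum_{n\ge 1}\frac{d(n)\chi(n)}{n^{1/2+it}}\,W\!\Big(\frac{n}{q^\ast(1+|t|)}\Big) + (\text{dual term}),
\]
where $W$ is a fixed smooth weight with $W(y)\ll_A (1+y)^{-A}$ and the dual term has the same shape with $\chi$ replaced by $\overline\chi$, a unimodular factor depending on $(\chi,t)$, and effective length $\asymp q^\ast(1+|t|)\le q^\ast T$. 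Cutting $[-T,T]$ into unit intervals and the Dirichlet series dyadically into blocks $n\sim N$ with $N\ll q^\ast T$, it suffices to bound, for each dyadic $N$,
\[
\sum_{\chi \bmod q^\ast}\int_{-T}^{T}\Big|\sum_{n\sim N}\frac{d(n)\chi(n)}{n^{1/2+it}}\Big|^2 dt \ll \big(\varphi(q^\ast)T+N\big)\sum_{n\sim N}\frac{d(n)^2}{n},
\]
which is the hybrid mean value (large sieve) inequality for Dirichlet polynomials: orthogonality of the characters mod $q^\ast$ isolates the diagonal and the classical mean value theorem handles the $t$-average. Since $\sum_{n\sim N}d(n)^2/n\ll(\log N)^3$, $N\ll q^\ast T$, and $q^\ast/\varphi(q^\ast)\ll\log\log q^\ast$ (so that only $O(\log\log q^\ast)$ dyadic blocks have $N>\varphi(q^\ast)T$), summing over the $O(\log q^\ast T)$ dyadic scales yields $\ll\varphi(q^\ast)T(\log q^\ast T)^4$, which with the reduction above gives the lemma.

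\emph{Main obstacle.} Nothing here is genuinely difficult; the only care needed is the bookkeeping of the approximate functional equation (the joint dependence of $W$ and of the dual term on $q^\ast$ and $t$, and the passage from the sharp cutoff $|t|\le T$ to unit windows) and checking that neither the reduction to primitive characters nor the dyadic summation loses more than the stated power of $\log(Tq)$. This is precisely why the estimate is normally invoked as a black box rather than reproved.
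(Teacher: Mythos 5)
Your proposal does exactly what the paper does: it invokes Montgomery's Theorem 10.1 from \emph{Topics in Multiplicative Number Theory} as a black box. The accompanying sketch is a reasonable outline of the classical argument, but one caveat: the uniform bound $\prod_{p\mid q}(1+p^{-1/2})$ on the Euler correction factors is \emph{not} a power of $\log q$ (for $q$ a primorial it is of size $\exp\bigl(c\sqrt{\log q}/\log\log q\bigr)$), so one cannot simply pull it out and sum $\varphi(q^\ast)$ over $q^\ast\mid q$; one must retain the dependence on $q^\ast$ and evaluate $\sum_{q^\ast\mid q}\varphi(q^\ast)\prod_{p\mid q,\,p\nmid q^\ast}(1+p^{-1/2})^4$ multiplicatively, giving $\ll q\prod_{p\mid q}(1+O(1/p))\ll\varphi(q)(\log\log q)^{O(1)}$, which is then absorbable into $(\log qT)^{O(1)}$. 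This is precisely the bookkeeping you flagged as the ``main obstacle,'' and it does not affect the validity of citing Montgomery.
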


\begin{proof}
See e.g. \cite[Theorem 10.1]{Montgomery71}.
\end{proof}

\begin{lemma}[Subconvexity estimate]
\label{le:ZetaSC}
One has, for $|t| \geq 2$,
\[
\zeta(1/2 + it) \ll |t|^{1/6} (\log |t|)^2.
\]
\end{lemma}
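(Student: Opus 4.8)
The plan is to recover this classical Weyl-type bound from the van der Corput method applied to the exponential sums in an approximate functional equation for $\zeta$. First I would use the approximate functional equation (equivalently a truncated Euler--Maclaurin expansion of $\zeta$ on the critical line) to reduce matters to bounding $\sum_{N < n \leq 2N} n^{-1/2-it}$ uniformly over dyadic ranges $1 \leq N \ll |t|^{1/2}$, the tail $N \gg |t|^{1/2}$ being controlled by the functional equation together with the trivial convexity bound just off the critical line. By partial summation it then suffices to prove, for each such $N$, the estimate $S_N(t) := \sum_{N < n \leq 2N} e\!\left(-\tfrac{t}{2\pi}\log n\right) \ll \bigl(N\,\lambda^{1/6} + \lambda^{-1/3}\bigr)(\log 2N)^{O(1)}$, where $\lambda := |t|/N^3$.

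Second, writing $f(x) = -\tfrac{t}{2\pi}\log x$ so that $f'''(x) \asymp |t|/N^3 = \lambda$ throughout $[N,2N]$, I would apply the van der Corput third-derivative test to $S_N(t)$ — equivalently, one pass of the $B$-process (Poisson summation plus stationary-phase evaluation) followed by the trivial bound on the resulting short sum, or two successive rounds of Weyl differencing. Either route yields the exponent pair $(1/6,2/3)$, i.e. $S_N(t) \ll N\lambda^{1/6} + \lambda^{-1/3}$ up to logarithmic factors. This step — controlling the error terms in the stationary-phase approximation and the boundary contributions with enough uniformity in $t$ — is the only substantive part of the argument.

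Finally I would reassemble the pieces: weighting $S_N(t)$ by $N^{-1/2}$ and summing over the $O(\log|t|)$ dyadic blocks with $N \ll |t|^{1/2}$, the main term contributes $\sum_N N^{-1/2}\cdot N\,(|t|/N^3)^{1/6} \ll |t|^{1/6}\log|t|$ (each block contributing $\asymp |t|^{1/6}$), while the secondary term contributes $\sum_N N^{-1/2}(N^3/|t|)^{1/3} = \sum_N N^{1/2}|t|^{-1/3} \ll |t|^{-1/12}\log|t|$, which is negligible. Combining with the extra logarithm lost in the partial summation and the approximate functional equation gives $\zeta(1/2+it) \ll |t|^{1/6}(\log|t|)^2$. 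I should note that this lemma is entirely classical, so in the paper itself one may simply cite, e.g., Titchmarsh \cite{Titchmarsh86} or a standard text on exponent pairs; the sketch above indicates the self-contained route, with the van der Corput $B$-process estimate being the main technical ingredient.
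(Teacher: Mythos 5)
The paper proves this lemma by bare citation --- ``See e.g.\ [formula (8.22)]\ \{iwaniec2004analytic\}'' --- and your closing remark that one would simply cite Titchmarsh or a standard text is exactly what the authors do. Your sketch is a correct reconstruction of the classical argument behind that citation (approximate functional equation, dyadic decomposition to $N\ll |t|^{1/2}$, van der Corput third-derivative test / exponent pair $(1/6,2/3)$ applied to $\sum_{N<n\le 2N}e(-\tfrac{t}{2\pi}\log n)$, then reassembly), so it is perfectly adequate as a self-contained route. Two small inaccuracies worth noting: the secondary term in the third-derivative test is $N^{1/2}\lambda^{-1/6}$, not $\lambda^{-1/3}$ (though both are negligible here after weighting by $N^{-1/2}$ and summing over $N\ll|t|^{1/2}$); and the pair $(1/6,2/3)$ is $AB(0,1)$, i.e.\ one Weyl-differencing step followed by one $B$-process pass (third-derivative test reduced to second-derivative test), rather than a single $B$-pass or two $A$-steps as you describe --- neither of your two alternatives alone produces $(1/6,2/3)$.
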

\begin{proof} See e.g. \cite[formula (8.22)]{iwaniec2004analytic}.
\end{proof}

\begin{lemma}[Hybrid Weyl subconvexity] \label{le:hybrid}
  For cube-free $q$, primitive characters $\chi \Mod{q}$ and $|t| \geq 2$,
  $$
  L(1/2 + it, \chi) \ll_\varepsilon (q t)^{1/6 + \varepsilon}
  $$
  for any $\varepsilon > 0$.
\end{lemma}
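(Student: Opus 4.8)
The bound stated is a hybrid Weyl-type subconvexity estimate (due to Heath-Brown), and the plan is to carry the classical Weyl treatment of $\zeta(1/2+it)$ recalled in Lemma~\ref{le:ZetaSC} over to the $q$-aspect, the key point being that for cube-free $q$ the complete character sums that arise still exhibit square-root cancellation. First I would invoke the approximate functional equation for $L(s,\chi)$: up to a factor $(q|t|)^{o(1)}$ and a negligible error it expresses $L(1/2+it,\chi)$ as a sum of dyadic pieces
\[
S_N \;=\; \sum_{n} \chi(n)\, n^{-it}\, W\!\Big(\tfrac{n}{N}\Big), \qquad 1 \ll N \ll (q|t|)^{1/2+\varepsilon},
\]
with $W$ a fixed smooth bump, so that it suffices to prove $|S_N| \ll N^{1/2}(q|t|)^{1/6+\varepsilon}$ for every such $N$. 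For $N \ll (q|t|)^{1/3}$ this is immediate from the trivial bound $|S_N| \ll N$, so all the work is concentrated in the range $(q|t|)^{1/3} \ll N \ll (q|t|)^{1/2}$.

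In that range I would apply one step of Weyl differencing in the variable $n$ (the ``A-process''): for a parameter $H$ with $1 \le H \le N$, Cauchy--Schwarz gives
\[
|S_N|^2 \;\ll\; \frac{N^2}{H} \;+\; \frac{N}{H}\sum_{1 \le h \le H}\bigl|T(h)\bigr|, \qquad T(h) = \sum_{n} \chi(n)\,\overline{\chi(n+h)}\,\Bigl(\tfrac{n}{n+h}\Bigr)^{it} W_h\!\Big(\tfrac{n}{N}\Big),
\]
with $W_h$ a smooth weight. Since $n \mapsto \chi(n)\overline{\chi(n+h)}$ has period $q$, I would then apply Poisson summation modulo $q$ to the inner sum (the ``B-process''), together with stationary phase for the Archimedean factor $(n/(n+h))^{it} = e\bigl(-\tfrac{t}{2\pi}\log(1+h/n)\bigr)$. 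This turns $T(h)$ into a short dual sum, of length $\asymp 1 + q|t|h/N^2$, whose arithmetic weight is the complete character sum
\[
C_q(h,\ell) \;=\; \sum_{x \bmod q} \chi(x)\,\overline{\chi(x+h)}\, e\!\Big(\tfrac{\ell x}{q}\Big).
\]
The only genuinely arithmetic input is the bound $|C_q(h,\ell)| \ll q^{1/2+o(1)}(h,q)^{1/2}$, and this is exactly where cube-freeness is used: by the Chinese Remainder Theorem $C_q$ factors over the prime powers $p^a \| q$ with $a \in \{1,2\}$; for $a=1$ the local factor is a Weil-bounded Kloosterman/Jacobi-type sum, while for $a=2$ it is evaluated by completing the square modulo $p^2$, again with square-root cancellation. (For $p^3 \mid q$ the relevant second derivative vanishes modulo $p$ and this elementary evaluation breaks down.) Feeding this back, estimating the dual sum trivially, summing over $h$ and $\ell$, and optimizing the differencing length (for instance $H \asymp (q|t|)^{1/6}$ at the top of the range) yields $|S_N|^2 \ll (q|t|)^{o(1)}\, N\, (q|t|)^{1/3}$, which is the required bound.

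I expect the main obstacle to be the clean execution of the B-process: making the arithmetic oscillation (Poisson modulo $q$) and the analytic oscillation (stationary phase in the $t$-variable) cooperate, and correctly handling the regimes in which one of them dominates --- in particular the frequency $\ell = 0$, the $t$-stable portion where $h$ is too small to create useful oscillation, and the $(h,q)$ divisor factors, all of which demand careful bookkeeping. By contrast, the evaluation of $C_q(h,\ell)$ itself is routine once cube-freeness is in force; removing the cube-free hypothesis is possible but requires genuinely different (spectral, cubic-moment) technology and is not needed here.
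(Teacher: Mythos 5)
The paper's proof of this lemma is simply a citation to Petrow--Young (\cite{petrow2019fourth}, Theorem~1.1), and your proposal is not a correct blind reconstruction of that result: the elementary argument you sketch does not, and cannot, deliver the Weyl exponent $1/6$ in the $q$-aspect.

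The fundamental gap is in the core claim that one round of Weyl differencing (A-process) followed by Poisson summation modulo $q$ and Weil/Gauss--sum bounds yields $|S_N|^2 \ll (q|t|)^{o(1)}\,N\,(q|t|)^{1/3}$. What this circle of elementary techniques actually gives in the $q$-aspect is the Burgess exponent $3/16$, not $1/6$; the hybrid version of this is Heath-Brown's hybrid Burgess bound $L(1/2+it,\chi)\ll (q(1+|t|))^{3/16+\varepsilon}$, which is strictly weaker than the statement of the lemma. In the $t$-aspect alone, van der Corput AB-processing on the purely Archimedean phase does give Weyl's $t^{1/6}$, but once both $q$ and $t$ vary, the savings from the complete sum $C_q(h,\ell)\ll q^{1/2}(h,q)^{1/2}$ are not strong enough to push the exponent below $3/16$; the Weil bound is not the bottleneck. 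Passing from $3/16$ to $1/6$ in the $q$-aspect (let alone the hybrid version) is a genuinely deep theorem: Conrey and Iwaniec first achieved $q^{1/6+\varepsilon}$ for real characters of prime modulus via the cubic moment of automorphic $L$-functions, and Petrow--Young extended this to all primitive $\chi$ of cube-free modulus (and to the hybrid $(q|t|)^{1/6+\varepsilon}$ bound), which is precisely what the paper cites. Your attribution of the lemma to Heath-Brown is therefore also incorrect.

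Your remarks about the role of cube-freeness are likewise misplaced. In the actual Petrow--Young argument the cube-free hypothesis does not arise from completing a square modulo $p^2$ inside a Weyl-differenced sum; it enters in the evaluation and spectral decomposition of a cubic moment of twisted $L$-functions, and in the associated analysis of Kloosterman/Sali\'e-type sums. (Indeed, the cube-free restriction has since been removed by Petrow and Young using further ideas within that same spectral/cubic-moment framework, not by ``different'' technology.) So while the heuristic picture you describe --- Poisson modulo $q$, stationary phase, careful bookkeeping of $\ell=0$ and of $(h,q)$ --- is exactly the right intuition for a Burgess-strength or large-sieve-strength bound, it is not a route to the stated $1/6$ exponent, and no amount of tightening the bookkeeping will close that gap.
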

\begin{proof}
  See \cite[Theorem 1.1]{petrow2019fourth}. 
\end{proof}

Of course the Lindel\"of and Generalized Lindel\"of Hypotheses would give us respectively that for any $\varepsilon > 0$, for $|t| \geq 2$ and any character $\chi \Mod q$,
$$
\zeta(1/2+it) \ll |t|^\varepsilon, \quad
L(1/2+it,\chi) \ll (q|t|)^\varepsilon.
$$

\begin{lemma}[Hybrid mean-value theorem] \label{le:hmvt}
  Let $a(n)$ be an arbitrary sequence of coefficients and $N, q \geq 1$ be integers and $T \geq 1$ real. Then, for any given $\varepsilon > 0$, 
  $$
  \sum_{\chi \Mod{q}} \int_{|t| \leq T} \Big | \sum_{n \leq N} a(n) \chi^2(n) n^{it} \Big |^2 dt \ll q^{\varepsilon} (q T + N) \sum_{n \leq N} |a(n)|^2. 
  $$
\end{lemma}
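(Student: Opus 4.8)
The plan is to reduce the estimate to the classical single-modulus hybrid mean value theorem for Dirichlet polynomials; the only point specific to the present statement is accounting for the square $\chi^{2}$.

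First I would use that $\chi\mapsto\chi^{2}$ is a homomorphism of the character group $\widehat{(\mathbb{Z}/q\mathbb{Z})^{\times}}$ whose kernel $K$ is the $2$-torsion subgroup. Since $(\mathbb{Z}/q\mathbb{Z})^{\times}$ is a product of cyclic groups, one for each odd prime power dividing $q$ together with at most one further factor coming from the power of $2$ dividing $q$, we have $|K|\ll 2^{\omega(q)}\ll_{\varepsilon}q^{\varepsilon}$. Hence every character $\psi\Mod q$ that is a square has exactly $|K|$ square roots modulo $q$, while non-squares have none; dropping the harmless restriction to squares then gives
\[
\sum_{\chi\Mod q}\int_{|t|\le T}\Big|\sum_{n\le N}a(n)\chi^{2}(n)n^{it}\Big|^{2}dt
=|K|\sum_{\substack{\psi\Mod q\\ \psi\text{ a square}}}\int_{|t|\le T}\Big|\sum_{n\le N}a(n)\psi(n)n^{it}\Big|^{2}dt
\le|K|\sum_{\psi\Mod q}\int_{|t|\le T}\Big|\sum_{n\le N}a(n)\psi(n)n^{it}\Big|^{2}dt.
\]

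Then I would invoke the unconditional hybrid mean value bound
\[
\sum_{\psi\Mod q}\int_{|t|\le T}\Big|\sum_{n\le N}a(n)\psi(n)n^{it}\Big|^{2}dt\ll(qT+N)\sum_{n\le N}|a(n)|^{2},
\]
which together with $|K|\ll_{\varepsilon}q^{\varepsilon}$ completes the proof. This bound is standard; one may either quote the hybrid large sieve inequality (e.g.\ from \cite{iwaniec2004analytic} or \cite{Montgomery71}), or argue directly: expanding the square and using orthogonality of characters modulo $q$, the left-hand side is $\varphi(q)\sum_{\substack{r\Mod q\\ (r,q)=1}}\int_{|t|\le T}\big|\sum_{n\le N,\ n\equiv r\,(q)}a(n)n^{it}\big|^{2}dt$; for each residue $r$ the frequencies $\log n$ with $n\le N$, $n\equiv r\Mod q$, are $\gg q/N$-spaced, so the Montgomery--Vaughan mean value theorem for Dirichlet polynomials bounds the inner integral by $\ll(T+N/q)\sum_{n\le N,\ n\equiv r\,(q)}|a(n)|^{2}$, and summing over $r$ with $\varphi(q)(T+N/q)\le qT+N$ yields the claim.

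There is no real obstacle: this is a routine lemma. The only points needing a little care are the bound $|K|\ll_{\varepsilon}q^{\varepsilon}$ on the number of square roots of a given character (which rests on $2^{\omega(q)}\ll_{\varepsilon}q^{\varepsilon}$ and the structure of $(\mathbb{Z}/q\mathbb{Z})^{\times}$ at the prime $2$) and the degenerate range $q>N$ in the mean value step, where each residue class modulo $q$ contains at most one $n\le N$ and the bound is immediate.
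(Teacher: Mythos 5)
Your proof is correct and follows essentially the same route as the paper: observe that each character $\psi\ \mathrm{mod}\ q$ has $\ll_\varepsilon q^\varepsilon$ square roots $\chi$ (so the sum over $\chi$ is bounded by $q^\varepsilon$ times the sum over all $\psi$), and then invoke the standard hybrid mean value theorem (the paper cites \cite[Theorem 6.4]{Montgomery71}). You merely supply more detail than the paper does, both in justifying the $q^\varepsilon$ bound on the size of the $2$-torsion subgroup and in sketching the proof of the hybrid mean value theorem itself.
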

\begin{proof}
  We notice that given a character $\psi \Mod{q}$ there are at most $\ll q^{\varepsilon}$ characters $\chi$ such that $\chi^2 = \psi$. Therefore the left-hand side of the claim is bounded by
  $$
  \ll q^{\varepsilon} \sum_{\psi \Mod{q}} \int_{|t| \leq T} \Big | \sum_{n \leq N} a(n) \psi(n) n^{it} \Big |^2 dt 
  $$
  and the result follows from the standard hybrid mean-value theorem, see e.g. \cite[Theorem 6.4]{Montgomery71}. 
\end{proof}

\subsection{Asymptotic estimates}

\begin{lemma}\label{le:asympt}
  Fix $\varepsilon \in (0, \tfrac{1}{100})$. Let $K_0$ be a positive constant. Suppose that $W \colon \mathbb{R} \rightarrow \mathbb{C}$ is such that, for all $k, \ell \in \{0, 1, 2, 3, 4\}$, one has
	\begin{equation}\label{eq:uniform_schwarz}
	|W^{(k)}(y)| \leq K_0 \frac{H^{\ell \varepsilon/4}}{(1+|y|)^\ell}, \quad\; \textrm{for all}\; y \in \mathbb{R}.
	\end{equation}
	Let $z \geq H^{1+\varepsilon}$. Then
  \begin{equation}
  \label{eq:constant_asymp}
  2 H^2 \sum_{d_1^2, d_2^2 \leq z} \frac{\mu(d_1) \mu(d_2)}{d_1^2 d_2^2} \sum_{\lambda \geq 1} \Big| W \Big ( \frac{H \lambda}{(d_1^2, d_2^2)} \Big ) \Big|^2 = C H^{1/2} \pi \int_0^\infty |W(y)|^2 \sqrt{y} dy + O(H^{1/2-\varepsilon/5}),
  \end{equation}
  where $C$ is as in~\eqref{eq:C_dfn} and where the implied constant depends only on $K_0$ and $\varepsilon$.
\end{lemma}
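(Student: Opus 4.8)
The plan is to expand the arithmetic out completely, interchange all summations, and reduce the whole expression to a one-dimensional Abel-summation computation whose main term is a product of $\zeta(3/2)$, the integral $\int_0^{\infty}|W(y)|^2\sqrt y\,dy$, and an Euler product. First I would parametrise $d_1=ea$, $d_2=eb$ with $e=(d_1,d_2)$ and $(a,b)=1$. Since $(d_1^2,d_2^2)=(d_1,d_2)^2=e^2$, and since $\mu(d_1)\mu(d_2)\neq0$ forces $e,a,b$ to be squarefree and pairwise coprime, in which case $\mu(d_1)\mu(d_2)=\mu(a)\mu(b)$, the left-hand side of \eqref{eq:constant_asymp} becomes
\[
2H^2\sum_{(a,b)=1}\frac{\mu(a)\mu(b)}{a^2b^2}\sum_{\substack{\mu(e)^2=1,\ (e,ab)=1\\ ea\le\sqrt z,\ eb\le\sqrt z}}\frac1{e^4}\sum_{\lambda\ge1}\Bigl|W\Bigl(\frac{H\lambda}{e^2}\Bigr)\Bigr|^2 .
\]
Using \eqref{eq:uniform_schwarz} (with $\ell\in\{0,1,2\}$) every one of these sums converges absolutely with ample room, so I may freely interchange them; it is convenient to move the sum over $\lambda$ outermost.

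For fixed $\lambda,a,b$ the inner sum over $e$ is a weighted average of the slowly-varying function $f:=|W|^2$ sampled at the points $H\lambda/e^2$, and I would evaluate it by Abel summation against the counting function
\[
A_q(t):=\#\{e\le t:\mu(e)^2=1,\ (e,q)=1\}=\bar\alpha(q)\,t+O\bigl(2^{\omega(q)}\sqrt t\bigr),\qquad \bar\alpha(q):=\frac1{\zeta(2)}\prod_{p\mid q}\frac p{p+1},
\]
taken with $q=ab$. The substitution $y=H\lambda/t^2$ turns the resulting main integral into $\frac{\bar\alpha(ab)}{2(H\lambda)^{3/2}}\int_{y_{\min}}^{\infty}f(y)\sqrt y\,dy$, with $y_{\min}=y_{\min}(\lambda,a,b)=H\lambda\max(a,b)^2/z$ recording the effect of the truncations $ea,eb\le\sqrt z$; the error caused by $A_q(t)-\bar\alpha(q)t=O(2^{\omega(q)}\sqrt t)$, after integrating by parts and estimating $f$ and $f'$ by \eqref{eq:uniform_schwarz} (the $\ell=2$ bounds are needed so that $\int y^{7/4}|f'(y)|\,dy$ converges), is $O\bigl(2^{\omega(ab)}H^{\varepsilon}(H\lambda)^{-7/4}\bigr)$ together with comparably small boundary contributions.

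Summing over $\lambda,a,b$ then finishes the job. The accumulated error terms total $O\bigl(H^{2+\varepsilon}H^{-7/4}\zeta(7/4)\sum_{a,b}2^{\omega(ab)}(ab)^{-2}\bigr)=O(H^{1/4+\varepsilon})$, which is acceptable. In the main term, replacing $\int_{y_{\min}}^{\infty}f\sqrt y$ by $\int_0^{\infty}f\sqrt y$ introduces, after summation, an error $O_{\varepsilon}(H^{1/2-\varepsilon/5})$; the key inputs here are $z\ge H^{1+\varepsilon}$, the pointwise bound $|W|\le K_0$ (so $\int_0^{Y}f(y)\sqrt y\,dy\ll Y^{3/2}$ for \emph{every} $Y$), the bound $\int_0^{\infty}f(y)\sqrt y\,dy\ll H^{\varepsilon}$ coming from the decay of $W$, the elementary tail estimate $\sum_{\lambda>L}\lambda^{-3/2}\ll L^{-1/2}$, and the observation that the relevant values of $a,b$ are constrained to a range of size $\ll_\varepsilon (z/H)^{1/2}H^{\varepsilon}$ on which $\sum\max(a,b)(ab)^{-2}=O_\varepsilon(\log H)$. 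It is important here to pass to the main term of the $e$-sum \emph{before} removing the $z$-truncation; removing it first would lose a full factor of $H^{1/2}$. What then remains is
\[
2H^2\cdot\frac{\zeta(3/2)}{2H^{3/2}}\Bigl(\int_0^{\infty}|W(y)|^2\sqrt y\,dy\Bigr)\cdot\frac1{\zeta(2)}\sum_{(a,b)=1}\frac{\mu(a)\mu(b)}{a^2b^2}\prod_{p\mid ab}\frac p{p+1},
\]
and since $(a,b)=1$ the last sum is multiplicative and equals $\prod_p\bigl(1-\tfrac1{p^2}\bigr)\bigl(1-\tfrac2{p(p+1)}\bigr)=\prod_p\frac{(p-1)^2(p+2)}{p^3}=\prod_p\bigl(1-\tfrac3{p^2}+\tfrac2{p^3}\bigr)=\pi C/\zeta(3/2)$ by \eqref{eq:C_dfn}. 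Hence the main term equals $C\pi H^{1/2}\int_0^{\infty}|W(y)|^2\sqrt y\,dy$, as asserted.

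I expect the real work to be in the error bookkeeping rather than in any single deep input. Because \eqref{eq:uniform_schwarz} controls only four derivatives and permits growth by $H^{\ell\varepsilon/4}$, each integral occurring in the Abel summation must be bounded by choosing the optimal $\ell$ and keeping the accumulated powers of $H^{\varepsilon}$ below $H^{\varepsilon/5}$; and, as noted, the interaction between the truncation $ea,eb\le\sqrt z$ and the extraction of the main term of the $e$-sum must be organised in the correct order to avoid an $H^{1/2}$ loss.
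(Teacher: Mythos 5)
Your proof is correct, but it takes a genuinely different route from the paper's. The paper first completes the sum over $d_1, d_2$ --- bounding the tail where $d_1 > \sqrt z$ or $d_2 > \sqrt z$ by $O(H^{1/2-\varepsilon/5})$ via the crude estimate $\sum_\lambda |W(\lambda/\nu)|^2 \ll \nu H^{\varepsilon/4}$ --- and then evaluates the completed sum by contour integration: setting $g(x) = |W(e^x)|^2 e^x$, Fourier inversion expresses $|W(r)|^2$ through $\hat g$, the triple sum over $\lambda,d_1,d_2$ collapses to $\frac{H}{i\pi}\int H^s \zeta(1-s)\zeta(2+2s)\prod_p(\cdots)\hat g(s/2\pi i)\,ds$, and the main term arises from the residue at the pole of $\zeta(2+2s)$ at $s = -1/2$. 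You instead separate the gcd by writing $d_1 = ea$, $d_2 = eb$ and treat the resulting $e$-sum by elementary Abel summation against the count of squarefree $e$ coprime to $ab$, with the factor $\zeta(3/2)$ and the Euler product assembling afterwards from the $\lambda$- and $(a,b)$-sums; your identity $\frac{1}{\zeta(2)}\prod_p\bigl(1-\tfrac{2}{p(p+1)}\bigr)=\prod_p\bigl(1-\tfrac{3}{p^2}+\tfrac{2}{p^3}\bigr)$ recovers exactly the Euler product appearing in the paper's contour calculation. Your route is more elementary --- no contour shifting and no bound on $\zeta$ along vertical lines --- at the price of heavier explicit bookkeeping: you must track $2^{\omega(ab)}$ factors, choose the optimal $\ell$ in \eqref{eq:uniform_schwarz} at each integral, and handle the boundary term in the Abel summation. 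One small remark: your claim that detruncating first ``would lose a full factor of $H^{1/2}$'' applies only if the crude diagonal bound is used on the whole completed sum; the paper avoids that loss by applying it solely to the tail terms, so in fact both orders of operations are viable.
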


\begin{proof}
The proof consists of two steps.

The first step is to show that the sum in \eqref{eq:constant_asymp} can be completed into a sum over all $d_1, d_2$ without affecting the claimed asymptotic. We use the information \eqref{eq:uniform_schwarz} for $k=0$ and $\ell = 0, 1$ to see that, for any $\nu > 0$,
\begin{equation}\label{eq:S_bound}
\sum_{\lambda \geq 1} |W(\lambda/\nu)|^2 \ll \sum_{1 \leq \lambda \leq \nu H^{\varepsilon/4}} 1 + \sum_{\lambda > \nu H^{\varepsilon/4}} \frac{H^{\varepsilon/2} \nu^2}{\lambda^2} \ll \nu H^{\varepsilon/4}.
\end{equation}

Hence
$$
2H^2 \sum_{\substack{d_1 > z^{1/2} \\ \textrm{or} \\ d_2 > z^{1/2}}} \frac{\mu(d_1) \mu(d_2)}{d_1^2 d_2^2} \sum_{\lambda\geq 1} \Big|W\Big(\frac{H\lambda}{(d_1^2,d_2^2)}\Big) \Big|^2 \ll H^{1+\varepsilon/4} \sum_{\substack{d_1 > z^{1/2} \\ \textrm{or} \\ d_2 > z^{1/2}}} \frac{(d_1,d_2)^2}{d_1^2 d_2^2}.
$$
Writing $(d_1, d_2) = d_0$ and $d_i = \delta_i d_0$ and utilizing symmetry and the lower bound for $z$, we see that this is
\[
\ll H^{1+\varepsilon/4} \sum_{d_0 \geq 1} \frac{1}{d_0^2} \sum_{\substack{\delta_1 \geq H^{(1+\varepsilon)/2}/d_0 \\ \delta_2 \geq 1}} \frac{1}{\delta_1^2 \delta_2^2} \ll H^{1+\varepsilon/4} \sum_{d_0 \geq 1} \frac{1}{d_0^2} \min\left\{1, \frac{d_0}{H^{(1+\varepsilon)/2}}\right\} \ll H^{1/2-\varepsilon/5}.
\]

Thus the left-hand side of \eqref{eq:constant_asymp} is 
\begin{equation}\label{eq:completed_sum}
2H^2 \sum_{d_1, d_2 \geq 1} \frac{\mu(d_1) \mu(d_2)}{d_1^2 d_2^2} \sum_{\lambda\geq 1} \Big| W\Big(\frac{H\lambda}{(d_1^2,d_2^2)}\Big) \Big|^2 + O(H^{1/2-\varepsilon/5}).
\end{equation}

The second step is to use contour integration to simplify \eqref{eq:completed_sum}. Define $g(x) = |W(e^x)|^2 e^x$, which is smooth and decays exponentially as $|x|\rightarrow\infty$. Now
\[
\hat{g}(\xi) = \int_{-\infty}^\infty |W(e^x)|^2 e^x e(-x\xi) dx = \int_0^\infty |W(y)|^2 y^{-2\pi i \xi} dy,
\] 
and standard partial integration arguments show that (i) $\hat{g}(\xi)$ is entire and (ii) $\hat{g}(\xi) = O(H^{2\varepsilon}/(|\xi|+1)^3)$ uniformly for $|\Im(\xi)| < 1/(2\pi)$. Fourier inversion implies, for $r > 0$,
$$
|W(r)|^2 = r^{-1} \frac{1}{2\pi i} \int_{(c)} r^s \hat{g}\Big(\frac{s}{2\pi i}\Big)\, ds,
$$
where the integral is over $\Re(s) = c$, and $-1 < c < 1$.

Hence, taking $c = -1/4$,
\begin{multline*}
2H^2 \sum_{d_1, d_2 \geq 1} \frac{\mu(d_1) \mu(d_2)}{d_1^2 d_2^2} \sum_{\lambda\geq 1} \Big| W\Big(\frac{H\lambda}{(d_1^2,d_2^2)}\Big)\Big|^2 \\
= \frac{H}{i\pi} \sum_{d_1, d_2} \frac{\mu(d_1)\mu(d_2)}{d_1^2 d_2^2} \sum_{\lambda \geq 1} \frac{(d_1, d_2)^2}{\lambda} \int_{(-1/4)} H^s \lambda^s (d_1, d_2)^{-2s} \hat{g}\Big(\frac{s}{2\pi i}\Big)\, ds.
\end{multline*}
The range of $s$ is such that the sums over both $\lambda$ and $d_1, d_2$ can be taken inside the integral, and the above simplifies to
\begin{multline*}
\frac{H}{i\pi} \int_{(-1/4)} H^s \zeta(1-s) \prod_p \Big(1 - \frac{2}{p^2} + \frac{1}{p^{2+2s}}\Big) \hat{g}\Big(\frac{s}{2\pi i}\Big)\, ds \\
= \frac{H}{i\pi} \int_{(-1/4)} H^s \zeta(1-s) \zeta(2+2s) \prod_p \Big(1 - \frac{2}{p^2} + \frac{2}{p^{4+2s}} - \frac{1}{p^{4+4s}}\Big) \hat{g}\Big(\frac{s}{2\pi i}\Big)\, ds.
\end{multline*}
The Euler product in the last line converges absolutely for $\Re s > -3/4$. Therefore using Lemma \ref{le:ZetaSC} (noting that the same bound holds also for $\zeta(c+it)$ with $c \geq 1/2$) and bounds on $\hat{g}$ we can push the contour integral above to the left to an integral over $\Re(s) = -3/4+\varepsilon$. Because of the singularity from $\zeta(2+2s)$ at $s = -1/2$ the above then simplifies to
\[
\begin{split}
&H^{1/2}\zeta(3/2) \prod_p \Big(1 - \frac{3}{p^2} + \frac{2}{p^3}\Big) \hat{g}\Big(-\frac{1}{4\pi i}\Big) + O(H^{1/4+3\varepsilon}) \\
&= H^{1/2} \int_0^\infty |W(y)|^2 \sqrt{y}\, dy  \zeta(3/2) \prod_p \Big(1 - \frac{3}{p^2} + \frac{2}{p^3}\Big) + O(H^{1/2-\varepsilon/5}).
\end{split}
\]
This verifies the lemma.
\end{proof}

We also have a minor variant:

\begin{lemma}\label{le:asymptS}
Fix $\varepsilon \in (0,\tfrac{1}{100})$. Let $S(x)= \frac{\sin \pi x}{\pi x}$, defined by continuity at $x=0$, and let $z \geq H^{1+\varepsilon}$. Then 
\begin{equation}
  \label{eq:S_constant_asymp}
  2 H^2 \sum_{d_1^2, d_2^2 \leq z} \frac{\mu(d_1) \mu(d_2)}{d_1^2 \cdot d_2^2} \sum_{\lambda \geq 1} S \Big ( \frac{H \lambda}{(d_1^2, d_2^2)} \Big )^2 = C H^{1/2} + O(H^{1/2-\varepsilon/8}).
\end{equation}
\end{lemma}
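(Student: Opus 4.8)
The function $S(x)=\sin(\pi x)/(\pi x)$ is smooth but, unlike the weights admissible in Lemma~\ref{le:asympt}, satisfies the decay hypothesis \eqref{eq:uniform_schwarz} only for $\ell\in\{0,1\}$; so the plan is to reduce \eqref{eq:S_constant_asymp} to Lemma~\ref{le:asympt} by truncating $S$ at a short scale. I would fix a smooth $\eta\colon\mathbb{R}\to[0,1]$ with $\eta\equiv1$ on $[-1,1]$ and $\eta\equiv0$ off $[-2,2]$, set $Y:=H^{\varepsilon/4}$, and write $S=W+A$ with $W(y):=S(y)\eta(y/Y)$ and $A:=S-W$. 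The first point to check is that $W$ is admissible, i.e.\ satisfies \eqref{eq:uniform_schwarz} with an \emph{absolute} constant $K_0$: $W$ vanishes off $[-2Y,2Y]$, and there $|W^{(k)}(y)|\ll_k 1$ (the derivatives of $S$ and of $y\mapsto\eta(y/Y)$ being uniformly bounded), while $(1+|y|)^\ell\le(1+2H^{\varepsilon/4})^\ell\ll H^{\ell\varepsilon/4}$, so $|W^{(k)}(y)|\ll H^{\ell\varepsilon/4}/(1+|y|)^\ell$ for all $k,\ell\in\{0,\dots,4\}$.

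With this in place, Lemma~\ref{le:asympt} applied to $W$ gives that the left-hand side of \eqref{eq:S_constant_asymp} with $W$ in place of $S$ equals $CH^{1/2}\,\pi\int_0^\infty W(y)^2\sqrt{y}\,dy+O(H^{1/2-\varepsilon/5})$. To evaluate the integral I would use two elementary facts. First, writing $\sin^2(\pi y)=\tfrac12(1-\cos2\pi y)$, substituting $t=2\pi y$, and invoking $\int_0^\infty(1-\cos t)\,t^{w-1}\,dt=-\Gamma(w)\cos(\pi w/2)$ (valid for $-2<\Re w<0$) at $w=-\tfrac12$ shows $\pi\int_0^\infty S(y)^2\sqrt{y}\,dy=1$. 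Second, $S^2-W^2$ is supported on $\{|y|\ge Y\}$ and bounded there by $2S(y)^2\le2/(\pi^2y^2)$, so $\bigl|\int_0^\infty(S^2-W^2)(y)\sqrt{y}\,dy\bigr|\ll\int_Y^\infty y^{-3/2}\,dy\ll Y^{-1/2}$. Hence the $W$-sum is $CH^{1/2}\bigl(1+O(Y^{-1/2})\bigr)+O(H^{1/2-\varepsilon/5})=CH^{1/2}+O(H^{1/2-\varepsilon/8})$.

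Next I would show that passing from $W$ back to $S=W+A$ in the double sum of \eqref{eq:S_constant_asymp} costs only $O(H^{1/2-\varepsilon/8})$. Since that sum is linear in the function being squared, expanding $S^2=W^2+2WA+A^2$ reduces the task to estimating, in absolute value, $2H^2\sum_{d_1^2,d_2^2\le z}\tfrac{1}{d_1^2d_2^2}\sum_{\lambda\ge1}\bigl|(WA)\bigl(\tfrac{H\lambda}{(d_1^2,d_2^2)}\bigr)\bigr|$ and the analogous sum with $A^2$ in place of $WA$. For the $A^2$ sum one uses $|A(y)|\le1/(\pi|y|)$ on its support, sums the geometric series in $\lambda$, and then sums over $d_1,d_2$ (writing $d_0=(d_1,d_2)$, $d_i=\delta_id_0$), which gives $\ll\sum_{d_0\ge1}\min\bigl(1,\,H/(Yd_0^2)\bigr)\ll\sqrt{H/Y}$. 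For the cross term the key observation is that $\eta(y/Y)\bigl(1-\eta(y/Y)\bigr)$ is supported on $Y<|y|<2Y$, so at most $O\bigl(1+Y(d_1,d_2)^2/H\bigr)$ values of $\lambda$ contribute to the inner sum, each of size $O(Y^{-2})$, and in fact no $\lambda$ contributes unless $(d_1,d_2)^2\gg H/Y$; the same bookkeeping then gives $\ll\sqrt{H/Y}$. Taking $Y=H^{\varepsilon/4}$, both bounds are $\ll H^{1/2-\varepsilon/8}$, and assembling the three contributions yields \eqref{eq:S_constant_asymp}.

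The one genuine subtlety — and the reason the truncation is needed at all — is that $S$ is inadmissible in Lemma~\ref{le:asympt}: a head-on repetition of that lemma's contour argument stalls, because the Mellin transform of $S(y)^2$ decays only polynomially in the imaginary direction, too slowly to move the contour past the pole at $s=-\tfrac12$ that carries the main term. Cutting $S$ off at scale $H^{\varepsilon/4}$ avoids this, and the choice of scale is forced by balancing the tail error $H^{1/2}Y^{-1/2}$ from the discarded part of $\int_0^\infty S^2\sqrt{y}$ against the cost $\sqrt{H/Y}$ of the omitted terms — which is exactly what degrades the exponent from the $\varepsilon/5$ of Lemma~\ref{le:asympt} to $\varepsilon/8$. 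Everything else is a routine reprise of the completion-and-contour-shift argument already carried out for Lemma~\ref{le:asympt}.
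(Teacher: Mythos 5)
Your proposal is correct and follows essentially the same route as the paper's proof: both truncate $S$ by multiplying by a smooth bump at scale $Y=H^{\varepsilon/4}$ (the paper sets $W(y)=S(y)h(y/H^{\varepsilon/4})$ with $h$ your $\eta$), apply Lemma~\ref{le:asympt} to the truncated function, then show the discrepancy in both the sum and the integral is $O(\sqrt{H/Y})=O(H^{1/2-\varepsilon/8})$. The only cosmetic differences are that the paper bounds $S^2-W^2$ directly by $y^{-2}\mathbf 1(|y|\ge Y)$ rather than expanding into $2WA+A^2$, and cites Gradshteyn--Ryzhik for $\pi\int_0^\infty S^2\sqrt{y}\,dy=1$ rather than deriving it from the Mellin transform of $1-\cos t$.
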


\begin{proof}
We first note that
\begin{equation}\label{eq:sinc_integral}
\int_0^\infty S(y)^2 \sqrt{y}\, dy = \frac{1}{\pi}.
\end{equation}
This identity follows from \cite[formula 3.823]{gradshteyn2014table}.

Thus \eqref{eq:S_constant_asymp} is a variant of \eqref{eq:constant_asymp}. Lemma \ref{le:asympt} does not apply directly because $S$ does not decay quickly enough. To overcome this issue, we let $h$ be a smooth bump function such that $h(x) = 1$ for $|x| \leq 1$ and $h(x) = 0$ for $|x|\geq 2$. We introduce the function
$$
W(y) = S(y)h(y/H^{\varepsilon/4})
$$
which satisfies the hypothesis of Lemma \ref{le:asympt} for our $\varepsilon$. On the other hand for such $W$
\begin{multline*}
2 H^2 \sum_{d_1^2, d_2^2 \leq z} \frac{\mu(d_1) \mu(d_2)}{d_1^2 \cdot d_2^2} \sum_{\lambda \geq 1} \left( S \Big ( \frac{H \lambda}{(d_1^2, d_2^2)} \Big )^2 - W \Big ( \frac{H \lambda}{(d_1^2, d_2^2)} \Big )^2 \right) \\
\ll H^2 \sum_{d_1^2, d_2^2} \frac{1}{d_1^2 d_2^2} \sum_{\lambda \geq 1} \frac{1}{(H\lambda/(d_1^2,d_2^2))^2} \mathbf{1}\Big( \frac{H\lambda}{(d_1^2,d_2^2)} \geq H^{\varepsilon/4}\Big).
\end{multline*}
We split the sum over $d_1$ and $d_2$ into the complementary ranges $(d_1^2, d_2^2) \leq H^{1-\varepsilon/4}$ and $(d_1^2, d_2^2) > H^{1-\varepsilon/4}$. In the second case we utilize that $\lambda > H^{\varepsilon/4-1} (d_1^2, d_2^2)$, and we see that the above is
$$
\ll \sum_{(d_1,d_2)^2 \leq H^{1-\varepsilon/4}} \frac{(d_1,d_2)^4}{d_1^2 d_2^2} + H^{1-\varepsilon/4}\sum_{(d_1,d_2)^2 > H^{1-\varepsilon/4}} \frac{(d_1,d_2)^2}{d_1^2 d_2^2}.
$$
Writing $(d_1, d_2) = d_0$ and $d_i = \delta_i d_0$, the above is
\begin{equation}\label{eq:crude_quadrat_bound}
\ll \sum_{d_0 \leq H^{1/2-\varepsilon/8}} \sum_{\delta_1, \delta_2} \frac{1}{\delta_1^2\delta_2^2} + H^{1-\varepsilon/4} \sum_{d_0 > H^{1/2-\varepsilon/8}} \frac{1}{d_0^2} \sum_{\delta_1, \delta_2} \frac{1}{\delta_1^2\delta_2^2} 
\ll H^{1/2-\varepsilon/8}.
\end{equation}
On the other hand, 
$$
\int_0^\infty S(y)^2\sqrt{y}\,dy - \int_0^\infty W(y)^2\sqrt{y}\,dy \ll \int_{H^{\varepsilon/4}}^\infty y^{-3/2}\, dy \ll H^{-\varepsilon/8}.
$$
Combining this with the bound \eqref{eq:crude_quadrat_bound} and the identity \eqref{eq:sinc_integral} verifies \eqref{eq:S_constant_asymp} with error term of order $H^{1/2-\varepsilon/5} + H^{1/2-\varepsilon/8} \ll H^{1/2-\varepsilon/8}$.
\end{proof}

\subsection{Initial reductions on second moments}
The following lemma will be used in the proof of Proposition~\ref{pr:prop2}.
\begin{lemma}
\label{le:SV}
  If $F \colon \mathbb{R} \rightarrow \mathbb{C}$ is square-integrable and $H \leq X$, then
  $$
  \int_{X}^{2X} |F(x + H) - F(x)|^2 dx \ll \sup_{\theta \in [\frac{H}{3X}, \frac{3 H}{X}]} \int_{X}^{3X} |F(u + \theta u) - F(u)|^2 du
  $$
\end{lemma}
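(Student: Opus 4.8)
The plan is to relate the discrete shift $F(x+H)-F(x)$ to a continuum of multiplicative dilations $F(u+\theta u)-F(u)$ via an averaging argument. First I would note that for $x \in [X,2X]$ one has $H/x \in [H/(2X), H/X]$, so shifting additively by $H$ is comparable to shifting multiplicatively by a factor $1+\theta$ with $\theta$ of size roughly $H/X$. Concretely, I would set up a substitution writing $x + H = x(1+\theta(x))$ with $\theta(x) = H/x$, and then seek to compare $\int_X^{2X}|F(x+H)-F(x)|^2\,dx$ with $\int_X^{3X}|F(u+\theta u)-F(u)|^2\,du$ for a well-chosen fixed $\theta$.

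The cleanest route is a pointwise/averaging trick: for any fixed $u$ and any $\theta$, and any intermediate scale, write $F(x+H)-F(x)$ as a telescoping chain through auxiliary points, or alternatively average the inequality $|F(u+\theta u) - F(u)|^2$ over a short range of $\theta$ (or of the base point) and use Fubini. Specifically I would introduce an averaging over a dyadic-type family of multiplicative shifts whose product telescopes from $x$ to $x+H$: since $H \le X$, the ratio $(x+H)/x = 1 + H/x$ lies in $[1, 2]$, and one can interpolate. A convenient concrete implementation: for $x\in[X,2X]$, observe $\theta := H/x$ lies in $[H/(3X), 3H/X]$ with room to spare, and perform a change of variables $u \mapsto x$ in the right-hand integral; the Jacobian $du/dx$ is bounded above and below by absolute constants on the relevant range, and the region $[X,3X]$ in $u$ comfortably contains the image of $[X,2X]$. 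Taking the supremum over the (short) range of admissible $\theta$ absorbs the fact that $\theta$ depends on $x$ rather than being constant. The factor $3X$ (rather than $2X$) in the outer integral, and the wide interval $[\tfrac{H}{3X}, \tfrac{3H}{X}]$ for $\theta$, provide exactly the slack needed to make the change of variables and the supremum step go through with an absolute implied constant.

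More carefully, I would proceed by first handling the case where we average over base points: write
\[
\int_X^{2X} |F(x+H) - F(x)|^2\,dx = \frac{1}{X}\int_X^{2X}\int_X^{2X} |F(x+H)-F(x)|^2\,dx\,dv,
\]
and in the inner integral substitute $x = v + w$ with $w$ ranging over an interval of length $\asymp X$, then rename so that the additive shift $H$ becomes a multiplicative shift by $\theta \asymp H/X$ relative to the new base point. After Fubini and a change of variables the double integral collapses to a single integral of the dilation-difference form, and bounding $w$-dependence of $\theta$ by the supremum over $\theta \in [\tfrac{H}{3X}, \tfrac{3H}{X}]$ completes the argument. The main obstacle — and the only real point requiring care — is bookkeeping the ranges: one must check that as $x$ ranges over $[X,2X]$ the corresponding base point $u$ with $u(1+\theta) = x+H$ stays within $[X,3X]$ and that $\theta$ stays within $[\tfrac{H}{3X}, \tfrac{3H}{X}]$, and that the Jacobian of every substitution is bounded above and below by absolute constants. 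None of this is deep, but getting the constants $3$ to be consistent throughout is where the proof actually lives; everything else is a routine application of Cauchy–Schwarz and Fubini.
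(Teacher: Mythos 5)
There is a genuine gap. The central difficulty you glide past is exactly the one your proposed mechanisms fail to resolve: for $x\in[X,2X]$, writing $x+H=x(1+\theta(x))$ with $\theta(x)=H/x$ makes $\theta$ a function of the integration variable, and you cannot then replace $\theta(x)$ by a single fixed $\theta$ at the price of a supremum. The quantity $\sup_{\theta\in[\frac{H}{3X},\frac{3H}{X}]}\int_X^{3X}|F(u+\theta u)-F(u)|^2\,du$ dominates, for each \emph{fixed} $\theta$, the integral over $u$ of the corresponding squared dilation-difference; it does not, by any Jacobian bookkeeping, dominate $\int_X^{2X}|F(x+\theta(x)x)-F(x)|^2\,dx$ where $\theta$ varies with $x$. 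Your remark that ``taking the supremum over the (short) range of admissible $\theta$ absorbs the fact that $\theta$ depends on $x$'' is the load-bearing assertion of the argument, and it is precisely what needs proof — and, stated that baldly, is false in general. Likewise, your ``average over base points'' step introduces a vacuous integration: $\frac{1}{X}\int_X^{2X}\int_X^{2X}|F(x+H)-F(x)|^2\,dx\,dv$ is identically the left-hand side since the inner integral does not depend on $v$, and the subsequent substitution $x=v+w$ leaves the additive shift $H$ untouched rather than converting it to a multiplicative one.

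The missing idea is to average over the \emph{shift}, not the base point. The paper's (Saffari--Vaughan) proof first uses the triangle inequality to write, for any $v\geq H$, $|F(x+H)-F(x)|^2 \ll |F(x+v)-F(x)|^2 + |F(x+v)-F(x+H)|^2$, and then integrates over $x\in[X,2X]$ \emph{and} over the auxiliary shift $v\in[2H,3H]$. This produces a factor of $H$ on the left and, after a change of variables consolidating both terms into a single double integral $\int_X^{3X}\int_H^{3H}|F(x+v)-F(x)|^2\,dv\,dx$, one substitutes $v=\theta x$; the Jacobian contributes a factor $x\asymp X$ and the $\theta$-range has length $\asymp H/X$, so $X\cdot(H/X)$ cancels the earlier factor of $H$. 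Only \emph{then} is the inner integral bounded by its supremum over $\theta$. Without the auxiliary integration over $v$ there is nothing that turns the $x$-dependent $\theta(x)=H/x$ into a free parameter, and the argument cannot close.
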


\begin{proof} The proof can be found in a paper by Saffari and Vaughan~\cite[Page 25]{SaffariVaughan77} but for the convenience of the reader we include the proof here.

First note that  by the triangle inequality we have, for any $v \geq H$,
$$
|F(x+H)-F(x)|^2 \ll |F(x+v)-F(x)|^2 + |F(x+v)-F(x+H)|^2.
$$
Integrating this over $x \in [X,2X]$ and $v \in [2H,3H]$,
\begin{multline*}
H \int_X^{2X} |F(x+H)-F(x)|^2\,dx \ll  \int_{2H}^{3H} \int_X^{2X} |F(x+v)-F(x)|^2\, dx dv \\
+ \int_{2H}^{3H} \int_X^{2X} |F(x+v) - F(x+H)|^2\, dx dv. 
\end{multline*}
By a change of variables the right-hand side is equal to
\begin{multline*}
\int_{2H}^{3H} \int_X^{2X} |F(x+v)-F(x)|^2\, dxdv
+ \int_H^{2H} \int_{X+H}^{2X+H} |F(y+w)-F(y)|^2\, dy dw \\
\le  \int_H^{3H} \int_X^{3X} |F(x+v)-F(x)|^2\, dx dv 
= \int_X^{3X} \int_H^{3H}  |F(x+v)-F(x)|^2\, dv dx.
\end{multline*}
Changing the order of integration was justified by Fubini's theorem. Letting $v = \theta x$ in the inner integral of the last expression above, we see the right-hand side is equal to
$$
\int_X^{3X} \int_{H/x}^{3H/x} |F(x+\theta x) - F(x)|^2 x \, d\theta dx \ll X \int_X^{3X} \int_{H/3X}^{3H/X} |F(x+\theta x) - F(x)|^2\, d\theta dx.
$$
Collecting everything and swapping the order of integration again, we obtain
\[
H \int_X^{2X} |F(x+H) - F(x)|^2\, dx \ll X \int_{H/3X}^{3H/X} \int_X^{3X} |F(u+\theta u) - F(u)|^2\, dud\theta,
\]
which immediately implies the claim.
\end{proof}

We will frequently use the following immediate consequences of the orthogonality of characters: For any sequence $b_n$ of complex numbers,
\begin{equation}
\label{eq:transWOchi0}
\begin{split}
\frac{1}{\varphi(q)} \sum_{\substack{\chi \Mod{q} \\ \chi \neq \chi_0 }} \left| \sum_{n} b_n \chi(n)\right|^2 &= \sum_{\substack{n_1\, \equiv\, n_2 \Mod{q} \\ (n_1 n_2, q) = 1}} b_{n_1} \overline{b_{n_2}} - \frac{1}{\varphi(q)} \sum_{\substack{n_1, n_2 \\ (n_1 n_2, q) = 1}} b_{n_1} \overline{b_{n_2}}\\
&= \sum_{\substack{a \Mod q \\ (a, q) = 1}} \left| \sum_{n\, \equiv\, a \Mod{q}} b_n - \frac{1}{\varphi(q)} \sum_{(n, q) = 1} b_n \right|^2
\end{split}
\end{equation}
and
\begin{equation}
\label{eq:transWchi0}
\begin{split}
\frac{1}{\varphi(q)} \sum_{\chi \Mod q} \left| \sum_{n} b_n \chi(n)\right|^2 &= \sum_{\substack{n_1\, \equiv\, n_2 \Mod{q} \\ (n_1 n_2, q) = 1}} b_{n_1} \overline{b_{n_2}} = \sum_{\substack{a \Mod q \\ (a, q) = 1}} \left| \sum_{n = a \Mod{q}} b_n \right|^2.
\end{split}
\end{equation}

\subsection{Point-counting lemmas}

\begin{lemma} \label{le:lat}
  Let $a,b \in \mathbb{N}$ be such that $\sqrt{b / a}$ is irrational. Let $\eta \in (0, 1]$ and $M \geq 1$. The number of $m \sim M$ such that
  \begin{equation}
  \label{eq:msqrtbaineq}
  \Big \| m \sqrt{\frac{b}{a}} \Big \| \leq \eta
  \end{equation}
  is bounded by
  $$
  \ll \eta M + \sqrt{\eta M} (a b)^{1/4} + 1.
  $$
\end{lemma}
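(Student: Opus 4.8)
The plan is to count the pairs $(m,n)$ with $m\sim M$ and $|m\sqrt{b/a}-n|\le\eta$ as lattice points of $\mathbb{Z}^2$ inside a long thin parallelogram, and to control that count through the successive minima of the parallelogram; the crucial input will be that $bm^2-an^2$ is a \emph{nonzero} integer, which forces $\sqrt{b/a}$ to be Diophantine of an explicit quality. Before that, two harmless reductions. If $\eta\ge 1/3$ the count is at most $\#\{m:M\le m<2M\}\le M\le 3\eta M$, so we may assume $\eta<1/3$; then for each $m$ there is at most one integer $n$ with $\|m\sqrt{b/a}\|=|m\sqrt{b/a}-n|\le\eta$, namely the nearest integer to $m\sqrt{b/a}$. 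If $2M\sqrt b<\eta\sqrt a$ then $m\sqrt{b/a}<\eta<1/3$ for every $m\in[M,2M)$, so $\|m\sqrt{b/a}\|\le\eta$ forces $m\le\eta\sqrt{a/b}$, and the number of such $m\sim M$ is $\le\min(M,\eta\sqrt{a/b})\le(\eta M)^{1/2}(a/b)^{1/4}\le(\eta M)^{1/2}(ab)^{1/4}$ since $b\ge 1$; this already lies within the claimed bound. Hence we may also assume $2M\sqrt b\ge\eta\sqrt a$.

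Write $N$ for the quantity to be bounded. By the first reduction $N=\#(\mathbb{Z}^2\cap P)$ with $P=\{(x,y):M\le x<2M,\ |x\sqrt b-y\sqrt a|\le\eta\sqrt a\}$, a parallelogram of area $2\eta M$. Since $P$ is convex, $\#(\mathbb{Z}^2\cap P)\le\#(\mathbb{Z}^2\cap(P-P))\le\#(\mathbb{Z}^2\cap R)$, where $R=\{(x,y):|x|\le M,\ |x\sqrt b-y\sqrt a|\le 2\eta\sqrt a\}\supseteq P-P$ is a symmetric convex parallelogram of area $8\eta M$. Let $\lambda_1\le\lambda_2$ be the successive minima of $R$ with respect to $\mathbb{Z}^2$. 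By Minkowski's second theorem $\lambda_1\lambda_2\cdot|R|\gg 1$, and the standard estimate $\#(\mathbb{Z}^2\cap R)\ll(1+\lambda_1^{-1})(1+\lambda_2^{-1})$ (use a Minkowski-reduced basis of $\mathbb{Z}^2$) gives
\[
N\le\#(\mathbb{Z}^2\cap R)\ll |R|+\lambda_1^{-1}+1\ll\eta M+\lambda_1^{-1}+1 .
\]

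It therefore remains to prove $\lambda_1\gg(\eta M)^{-1/2}(ab)^{-1/4}$. Suppose $(x,y)\in\mathbb{Z}^2\setminus\{0\}$ lies in $tR$, i.e.\ $|x|\le tM$ and $|x\sqrt b-y\sqrt a|\le 2t\eta\sqrt a$. If $x=0$ then $0\ne|y|\sqrt a\le 2t\eta\sqrt a$, so $t\ge(2\eta)^{-1}$, which exceeds a fixed multiple of $(\eta M)^{-1/2}(ab)^{-1/4}$ because $M,ab\ge 1$ and $\eta\le 1$. If $x\ne 0$, then $bx^2-ay^2$ is a nonzero integer (here we use that $\sqrt{b/a}$ is irrational), so
\[
1\le|bx^2-ay^2|=|x\sqrt b-y\sqrt a|\,|x\sqrt b+y\sqrt a|\le 2t\eta\sqrt a\,(|x|\sqrt b+|y|\sqrt a).
\]
Since $|y|\sqrt a\le|x|\sqrt b+|x\sqrt b-y\sqrt a|\le tM\sqrt b+2t\eta\sqrt a$ and $2\eta\sqrt a\le 4M\sqrt b$ (our last reduction), we get $|x|\sqrt b+|y|\sqrt a\le 6tM\sqrt b$, whence $1\le 12\,t^2\eta M\sqrt{ab}$ and $t\gg(\eta M)^{-1/2}(ab)^{-1/4}$. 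This gives the required lower bound on $\lambda_1$, and combined with the display above it yields $N\ll\eta M+(\eta M)^{1/2}(ab)^{1/4}+1$, as claimed.

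The main difficulty is the lower bound for $\lambda_1$ in the last step — equivalently the quantitative Diophantine inequality $\|m\sqrt{b/a}\|\gg(m\sqrt{ab})^{-1}$ valid in the relevant range of $m$, which is exactly what removes the obstruction to a purely metric lattice-point count (a generic thin parallelogram of area $\eta M$ and length $\gg M$ could contain far more than $\eta M$ lattice points, so the arithmetic of $b/a$ must be used). By contrast the remaining ingredients — isolating the degenerate regimes and bounding lattice points in terms of successive minima — are routine; an alternative, continued-fraction, version of the argument is also possible, choosing a convergent denominator $q$ of $\sqrt{b/a}$ of size $\asymp(M/\eta)^{1/2}(ab)^{-1/4}$ and splitting $[M,2M)$ into blocks of length $q$, using the bound $O(\sqrt{ab})$ for the partial quotients of $\sqrt{b/a}$.
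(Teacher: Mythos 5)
Your proof is correct, and it takes a genuinely different route from the paper's. The paper argues via continued fractions: it picks a denominator $q$ of a good rational approximation $r/q$ to $\sqrt{b/a}$ with $q$ in a prescribed dyadic range (which is possible because a quadratic irrational has partial quotients $O(\sqrt{ab})$), splits $[M,2M)$ into blocks of length $q$, and counts within each block via the pigeonhole bound $\ll q\eta+1$; optimizing the choice $q\asymp (M/\eta)^{1/2}(ab)^{-1/4}$ gives the result. You instead reformulate the count as lattice points in a thin parallelogram, pass to the symmetric difference body, and invoke Minkowski's second theorem together with the standard bound $\#(\mathbb{Z}^2\cap R)\ll\prod_i(1+\lambda_i^{-1})$, so that the whole problem reduces to a lower bound for $\lambda_1$, which you extract from the quantitative Liouville inequality $|bx^2-ay^2|\ge 1$. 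The arithmetic input is the same in both (bounded partial quotients of $\sqrt{b/a}$ is equivalent to the Liouville bound you use), but the surrounding machinery differs: the paper's block argument is more elementary and self-contained, while your geometry-of-numbers version is conceptually cleaner — it makes transparent that everything hinges on the single estimate $\lambda_1\gg(\eta M)^{-1/2}(ab)^{-1/4}$ — and would generalize more smoothly to analogous higher-dimensional counting problems. One small point worth flagging for a reader: your $x=0$ case needs $1/(2\eta)\gg(\eta M)^{-1/2}(ab)^{-1/4}$, which indeed follows from $\eta\le 1$, $M\ge 1$, $ab\ge 1$ as you note, but it is the only place the hypothesis $\eta\le 1$ enters the successive-minimum bound, so it should stay explicit.
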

\begin{proof}
We can clearly assume that $\eta^{1/2} (ab)^{1/4} \leq M^{1/2}$ since otherwise the claim is trivial.
Assume we have a (reduced) rational approximation $r/q$ with $r \in \mathbb{Z}$ and $q \in \mathbb{N}$ such that
\begin{equation}
\label{eq:ratapprox}
\left|\sqrt{\frac{b}{a}} - \frac{r}{q}\right| \leq \frac{1}{q^2}.
\end{equation}
Now, writing each $m \in (M, 2M]$ as $m = kq + \ell$ with $0 \leq \ell \leq q-1$, we see that the number of solutions to~\eqref{eq:msqrtbaineq} with $m \sim M$ is at most
\[
\begin{split}
& \sum_{\lfloor M/q \rfloor \leq k \leq 2M/q} \Bigl|\Bigl\{0 \leq \ell \leq q-1 \colon \Big \| (kq + \ell) \sqrt{\frac{b}{a}} \Big \| \leq \eta \Bigr\}\Bigr|\\
&\ll \left(\frac{M}{q} + 1\right) \max_{\xi \in [0, 1]} \Bigl|\Bigl\{0 \leq \ell \leq q-1 \colon \Big \| \ell \sqrt{\frac{b}{a}} + \xi \Big \| \leq \eta \Bigr\}\Bigr| \\
&\ll \left(\frac{M}{q} + 1\right) \max_{\xi \in [0, 1]} \Bigl|\Bigl\{0 \leq \ell \leq q-1 \colon \Big \| \frac{\ell r}{q} + \xi \Big \| \leq \eta + 1/q \Bigr\}\Bigr| \\
&\ll \left(\frac{M}{q} + 1\right) \left(q \cdot \eta + 1\right) \ll M\eta + \frac{M}{q} + q\eta + 1.
\end{split}
\]

Now since $\sqrt{b/a} = \sqrt{ab}/a$ is a quadratic irrational, the partial denominators in its continued fraction expansion have size at most $2\sqrt{ab}$ (see for instance \cite[p.~44]{rockett1992}). In particular this means that for any given $R \geq 1$, we can find $q \in [R, 3\sqrt{ab} R]$ such that~\eqref{eq:ratapprox} holds for some $r$ coprime to $q$. Taking $R = M^{1/2}/(\eta^{1/2} (ab)^{1/4}) \geq 1$, we see that the number of solutions is indeed
\[
\ll M\eta + M^{1/2} \eta^{1/2} (ab)^{1/4} + 1.
\]
  
\end{proof}

\begin{lemma}\label{le:lat2}
Let $a, b \in \mathbb{N}$ be such that $\sqrt{b/a}$ is irrational, and let $M_1, M_2, T \geq 1$. The number of solutions to 
\[ 
|a m_1^2 - b m_2^2| \leq \frac{b M_2^2}{T} \quad \text{with $m_1 \sim M_1$ and $m_2 \sim M_2$}
\]
is
\[
\ll \frac{M_1 M_2}{T} + \Big ( \frac{(M_1 M_2)^{1/2}(ab)^{1/4}}{T^{1/2}} + 1 \Big ) \cdot \mathbf{1}_{M_2 < T}.
\]
\end{lemma}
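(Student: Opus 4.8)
The plan is to reduce the counting problem to an application of Lemma \ref{le:lat} by fixing one of the variables and extracting from the inequality $|am_1^2 - bm_2^2| \le bM_2^2/T$ a statement about $\| m_1 \sqrt{b/a} \|$ being small. First I would factor $am_1^2 - bm_2^2 = a(m_1 - m_2\sqrt{b/a})(m_1 + m_2\sqrt{b/a})$. Since $m_1 \sim M_1$ and $m_2 \sim M_2$, the second factor satisfies $m_1 + m_2\sqrt{b/a} \asymp M_1 + M_2\sqrt{b/a}$; crucially, the inequality forces $m_1 \asymp m_2\sqrt{b/a}$ (up to constants depending on the ranges), so in fact $m_1 + m_2\sqrt{b/a} \asymp M_2\sqrt{b/a}$, i.e. is of size $\asymp \sqrt{b/a}\, M_2$. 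Dividing through, the hypothesis becomes
\[
\Big| m_1 - m_2 \sqrt{\tfrac{b}{a}} \Big| \ll \frac{b M_2^2 / T}{a \cdot \sqrt{b/a}\, M_2} = \frac{\sqrt{ab}\, M_2}{aT} = \frac{M_2}{T}\sqrt{\tfrac{b}{a}}.
\]
(One should be slightly careful that if no $m_1$ is of the right size relative to a given $m_2$ then that $m_2$ contributes nothing; this is where the constraint $m_1 \sim M_1$ and $m_2 \sim M_2$ together with the width of the inequality pin things down, and also where the indicator $\mathbf 1_{M_2 < T}$ will ultimately come from.)

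Next I would fix $m_2 \sim M_2$ and count the admissible $m_1$. The displayed inequality says $m_1$ lies within $\ll (M_2/T)\sqrt{b/a}$ of the real number $m_2\sqrt{b/a}$. If $(M_2/T)\sqrt{b/a} \ge 1$, i.e. essentially $M_2 \ge T$ (after absorbing the $\sqrt{b/a}$, or rather treating that regime directly), the number of such integers $m_1$ is $\ll (M_2/T)\sqrt{b/a} \cdot$ something — but more robustly one sums over $m_2$ directly: the total count is the number of lattice points $(m_1,m_2)$ in a thin region, and a direct first-moment bound gives $\ll M_1 M_2/T + O(\text{boundary})$, matching the first term. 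The interesting regime is $M_2 < T$, where for each fixed $m_2$ there is at most one candidate $m_1$, and $m_1$ exists only if $\| m_2\sqrt{b/a}\| \le \ll (M_2/T)\sqrt{b/a}$. Wait — it is cleaner to instead fix $m_1$: rewrite as $\|m_2\sqrt{a/b} - m_1 \cdot (\text{stuff})\|$... Actually the cleanest route is: from $|m_1 - m_2\sqrt{b/a}| \ll (M_2/T)\sqrt{b/a}$ and $m_1$ an integer, we get $\| m_2 \sqrt{b/a} \| \ll (M_2/T)\sqrt{b/a}$ provided the right-hand side is $\le 1/2$. So set $\eta \asymp (M_2/T)\sqrt{b/a}$... but this may exceed $1$. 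To stay inside the hypotheses of Lemma \ref{le:lat} I would instead swap the roles of $a$ and $b$ when $b/a$ is large, or equivalently note the problem is symmetric under $(a,m_1)\leftrightarrow(b,m_2)$ up to swapping the size parameters. So without loss of generality arrange the smaller of $M_1\sqrt a, M_2\sqrt b$ appropriately; then apply Lemma \ref{le:lat} with $M = M_2$ and $\eta \asymp M_2/(T) \cdot$ (a ratio that is $\asymp 1$ after the normalization), giving a count of admissible $m_2$ of size $\ll \eta M_2 + \sqrt{\eta M_2}(ab)^{1/4} + 1 \ll M_1M_2/T + (M_1M_2)^{1/2}(ab)^{1/4}/T^{1/2} + 1$. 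Summing the "$+1$" over the outer variable is what forces the indicator $\mathbf 1_{M_2 < T}$: when $M_2 < T$ each of the $\ll M_2$ values of the outer variable contributes, but when $M_2 \ge T$ we are in the first regime and the $M_1M_2/T$ term already dominates, so no stray additive loss occurs.

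The main obstacle I anticipate is bookkeeping the normalizations cleanly: the two factors $m_1 - m_2\sqrt{b/a}$ and $m_1 + m_2\sqrt{b/a}$ have sizes $\asymp M_2\sqrt{b/a}/T$ and $\asymp \max(M_1, M_2\sqrt{b/a})$ respectively, and one must verify that the inequality really does force $M_1 \asymp M_2\sqrt{b/a}$ (otherwise the region is empty and there is nothing to prove) so that the second factor can be replaced by $M_2\sqrt{b/a}$. Once that reduction is in place, the rest is a direct appeal to Lemma \ref{le:lat} with $\eta$ chosen as above, plus the elementary observation that when $M_2 \ge T$ one instead counts trivially. I would also double check the edge case where $(M_2/T)\sqrt{b/a} > 1/2$, handling it by the symmetry $(a,M_1)\leftrightarrow(b,M_2)$ so that Lemma \ref{le:lat} is always invoked with a genuine $\eta \in (0,1]$; and the case where $\sqrt{b/a}$ being irrational is used only to guarantee $am_1^2 \ne bm_2^2$, so no term is miscounted at the center of the interval.
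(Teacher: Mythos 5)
Your plan has the right skeleton --- factor the quadratic form, bound the ``far'' factor from below, reduce to a $\|\cdot\|$ condition and invoke Lemma~\ref{le:lat} --- and in that respect it matches the paper's argument. The difficulty you then fight with, however, is self-inflicted by the choice of normalization. Factoring out $a$ leaves you with $\eta \asymp (M_2/T)\sqrt{b/a}$, which can exceed $1$ even when $M_2 < T$, and the case split ``$\eta < 1$'' does not visibly match the indicator $\mathbf{1}_{M_2 < T}$ in the statement. Your proposed repair --- swap $a\leftrightarrow b$ and invoke symmetry --- is not cleanly justified as written: the phrase ``a ratio that is $\asymp 1$ after the normalization'' cannot be arranged, because $\sqrt{b/a}$ is a fixed input, and the swap also exchanges $M_1$ with $M_2$, so it is not clear you end up with the right indicator. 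It happens to be rescuable because the inequality forces $aM_1^2 \asymp bM_2^2$ on the nonempty locus, but you never close that loop explicitly, and applying Lemma~\ref{le:lat} with $M = M_2$ as you propose does not obviously reproduce the stated bound with $\mathbf{1}_{M_2 < T}$.

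The clean route, and the one the paper takes, is to factor out $b$: write
\[
am_1^2 - bm_2^2 = b\Bigl(m_1\sqrt{\tfrac{a}{b}} - m_2\Bigr)\Bigl(m_1\sqrt{\tfrac{a}{b}} + m_2\Bigr),
\]
so that the second factor is $\geq m_2 \geq M_2$ with no further argument, and dividing gives
\[
\Bigl|m_1\sqrt{\tfrac{a}{b}} - m_2\Bigr| \leq \frac{M_2}{T}.
\]
Now $\eta = M_2/T$ is automatically $\leq 1$ precisely when $M_2 \leq T$, which is where $\mathbf{1}_{M_2 < T}$ comes from. When $M_2 \geq T$ one counts trivially (fix $m_1$, then $O(M_2/T)$ choices of $m_2$) and obtains $\ll M_1M_2/T$ with no secondary terms; when $M_2 < T$ each $m_1$ admits at most two $m_2$, and Lemma~\ref{le:lat} is applied with $M = M_1$ (not $M_2$) and $\eta = M_2/T$, giving $\ll M_1M_2/T + (M_1M_2)^{1/2}(ab)^{1/4}/T^{1/2} + 1$ directly. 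The indicator does not arise from ``summing the $+1$ over the outer variable'' as you suggest; there is no further summation after Lemma~\ref{le:lat} is applied. Also, irrationality of $\sqrt{b/a}$ is used substantively inside Lemma~\ref{le:lat} itself (to bound the partial quotients of the continued fraction), not merely to exclude $am_1^2 = bm_2^2$.
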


\begin{proof}
Dividing by $b$ and factoring, we see that we need to count the number of solutions to
\[
\left|\left(m_1 \sqrt{\frac{a}{b}} - m_2\right)\left(m_1 \sqrt{\frac{a}{b}} + m_2\right)\right| \leq \frac{M_2^2}{T}
\]
Dividing by the second factor, we see that it suffices to count the number of solutions to 
\[
\left|m_1 \sqrt{\frac{a}{b}} - m_2\right| \leq \frac{M_2}{T}.
\]
If $M_2 \geq T$, we have $M_1$ choices for $m_1$ and after that $O(M_2/T)$ choices for $m_2$, so in total $M_1 M_2/T$ solutions which is fine.

If $M_2 < T$, then once $m_1$ is chosen there are at most two choices for $m_2$. Therefore it suffices to count the number of integers $m_1 \sim M_1$ such that 
\[
\left\| m_1 \sqrt{\frac{b}{a}} \right\| \leq \frac{M_2}{T}.
\]
The result now follows from Lemma \ref{le:lat}. 
\end{proof}

    \section{The range $H^{1+\varepsilon} \leq z \leq \min\{X/H^{1/2+\varepsilon}, H^{1/2-\varepsilon}X^{1/2}\}$ in the $t$-aspect : Proof of Proposition \ref{pr:prop1}}
\label{se:prop1}
    In what follows we let $S$ be the sinc function as defined in Lemma \ref{le:asymptS}. Proposition~\ref{pr:prop1} follows immediately combining the following proposition with Lemma~\ref{le:asymptS}.
    \begin{proposition}\label{thm:smalldivisors}
Let $X^{\varepsilon} \leq H \leq X^{2/3 - \varepsilon}$ and $H^{1+\varepsilon}  \leq z \leq \min\{X^{1-\varepsilon}/H^{1/2}, H^{1/2-\varepsilon}X^{1/2}\}$. 
Then, as $X \rightarrow \infty$, 
\begin{multline*}
\frac{1}{X} \int_X^{2X} \Big| \sum_{d^2 \leq z} \mu(d) \sum_{x/d^2 < n \leq (x+H)/d^2} 1 - H \sum_{d^2 \leq z} \frac{\mu(d)}{d^2}\Big|^2\, dx \\
= (1+O(H^{-\varepsilon/2}))2H^2 \sum_{k_1^2,k_2^2 \leq z} \frac{\mu(k_1) \mu(k_2)}{k_1^2 k_2^2} \sum_{\lambda \geq 1} S\Big(\frac{H\lambda}{(k_1^2,k_2^2)}\Big)^2 + O(H^{1/2-\varepsilon/3}).
\end{multline*}
\end{proposition}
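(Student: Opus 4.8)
The plan is to convert each inner count into a main term plus a sawtooth, to Fourier-expand the sawtooth in the outer variable $x$, and then to split the resulting second moment into a ``diagonal'' contribution reproducing the claimed main term and an ``off-diagonal'' contribution estimated with the point-counting Lemmas \ref{le:lat} and \ref{le:lat2}.

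\emph{Step 1: sawtooth expansion.} With $\psi(t) = \{t\} - \tfrac12$, the identity $\lfloor t\rfloor = t - \tfrac12 - \psi(t)$ gives, for every $d$,
\[
\sum_{x/d^2 < n \le (x+H)/d^2} 1 = \frac{H}{d^2} + \psi\Big(\frac{x}{d^2}\Big) - \psi\Big(\frac{x+H}{d^2}\Big),
\]
so the expression inside the absolute value equals $E(x) := \sum_{d^2 \le z} \mu(d)\big(\psi(x/d^2) - \psi((x+H)/d^2)\big)$. Inserting the $L^2$-convergent Fourier series $\psi(t) = -\sum_{h \ne 0} e(ht)/(2\pi i h)$ together with the elementary identity $\tfrac{1}{2\pi i h}\big(e(hH/d^2) - 1\big) = \tfrac{H}{d^2}\, e\big(\tfrac{hH}{2d^2}\big)\, S\big(\tfrac{hH}{d^2}\big)$, one gets
\[
E(x) = H \sum_{d^2 \le z} \frac{\mu(d)}{d^2} \sum_{h \ne 0} e\Big(\frac{hH}{2d^2}\Big)\, S\Big(\frac{hH}{d^2}\Big)\, e\Big(\frac{hx}{d^2}\Big).
\]
It is convenient to truncate the inner sum at $|h| \le \mathcal{H}$ for a suitably chosen $\mathcal{H}$ (somewhat larger than $z/H$), keeping the tail $|h| > \mathcal{H}$ for a separate, easier estimate: by $|S(y)| \ll \min(1,|y|^{-1})$ the tail is of lower order, and its only effect on the main term below will be a multiplicative $1 + O(H^{-\varepsilon/2})$.

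\emph{Step 2: the diagonal is the main term.} Expanding $|E(x)|^2$, integrating over $x \in [X,2X]$, and using $\tfrac1X \int_X^{2X} e(\beta x)\, dx = 1$ when $\beta = 0$ and $\ll \min(1,(X|\beta|)^{-1})$ otherwise, the diagonal is the contribution of $h_1/d_1^2 = h_2/d_2^2$. Writing $(d_1,d_2) = e$, $d_i = e\delta_i$ with $(\delta_1,\delta_2)=1$, this forces $h_1 = \delta_1^2\lambda$ and $h_2 = \delta_2^2\lambda$ for a nonzero integer $\lambda$, the phase factors cancel, and, $S$ being even, the diagonal equals
\[
2H^2 \sum_{d_1^2, d_2^2 \le z} \frac{\mu(d_1)\mu(d_2)}{d_1^2 d_2^2} \sum_{\lambda \ge 1} S\Big(\frac{\lambda H}{(d_1^2, d_2^2)}\Big)^2
\]
up to the restriction $\delta_i^2\lambda \le \mathcal{H}$ coming from the truncation; removing that restriction costs only a factor $1 + O(H^{-\varepsilon/2})$ by the rapid decay of $S(y)^2$ and the choice of $\mathcal{H}$. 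This matches the stated main term.

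\emph{Step 3: the off-diagonal, and the main obstacle.} It remains to bound the contribution of $h_1/d_1^2 \ne h_2/d_2^2$ (together with the tail $|h_i| > \mathcal{H}$) by $O(H^{1/2-\varepsilon/3})$. Writing the frequency gap as $h_1/d_1^2 - h_2/d_2^2 = (h_1 d_2^2 - h_2 d_1^2)/(d_1^2 d_2^2)$ and using $|S(hH/d^2)| \ll \min(1, d^2/(|h|H))$, this contribution is
\[
\ll H^2 \sum_{\substack{d_1^2, d_2^2 \le z \\ 1 \le |h_1|, |h_2| \le \mathcal{H}}} \frac{\min(1, d_1^2/(|h_1|H))\,\min(1, d_2^2/(|h_2|H))}{d_1^2 d_2^2}\cdot \frac{1}{X}\min\Big(1, \frac{d_1^2 d_2^2}{X\,|h_1 d_2^2 - h_2 d_1^2|}\Big)
\]
plus an analogous but smaller tail term. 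The plan is to dyadically decompose over $d_i \sim D_i$, $|h_i| \sim P_i$, and over $|h_1 d_2^2 - h_2 d_1^2| \sim R$; for fixed $h_1, h_2$, counting the pairs $(d_1, d_2)$ with $|h_2 d_1^2 - h_1 d_2^2| \le R$ is precisely the situation of Lemma \ref{le:lat2} (with the divisors playing the role of the squared variables), once the degenerate case in which $h_1 h_2$ is a perfect square is split off and handled directly. Carrying out the sum over all dyadic scales, and invoking the hypotheses $z \le X^{1-\varepsilon}/H^{1/2}$ and $z \le H^{1/2-\varepsilon} X^{1/2}$ — needed to control, respectively, the terms where the frequency gap can only be bounded below trivially by $1/(d_1^2 d_2^2)$ and the terms where the equidistribution estimate of Lemma \ref{le:lat2} forces it to be large — yields the bound $O(H^{1/2-\varepsilon/3})$.

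I expect this last step to be the main obstacle. The trivial lower bound $|h_1 d_2^2 - h_2 d_1^2| \ge 1$ for the frequency gap, used on its own, reaches only $H \le X^{1/2-\varepsilon}$, precisely the range obtained by summing the conjectural correlation estimates \eqref{eq:corsums}; the extra range up to $H \le X^{2/3-\varepsilon}$ relies essentially on the fact, encoded through continued fractions in Lemma \ref{le:lat}, that $|h_1 d_2^2 - h_2 d_1^2|$ is rarely that small. Making the dyadic bookkeeping close under the two competing upper bounds on $z$ is the one genuinely delicate computation.
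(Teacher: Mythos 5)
Your proposal follows the same overall strategy as the paper: sawtooth expansion of the inner counts, Fourier expansion in $x$, diagonal contribution giving the main term, and off-diagonal controlled via Lemma~\ref{le:lat2}, with the degenerate rational case split off. The one real structural difference is that you dispense with the paper's smoothing: the paper replaces $\mathbf{1}_{[1,2]}(x/X)$ by Beurling--Selberg majorants/minorants $\sigma_\pm$ whose Fourier transforms are supported in $[-BH^{\varepsilon/2},BH^{\varepsilon/2}]$, so the off-diagonal is restricted \emph{sharply} to $|n_1/d_1^2-n_2/d_2^2|\le BH^{\varepsilon/2}/X$ and the smoothing is removed at the end, whereas you work with the sharp $x$-cutoff directly and use $\frac1X\int_X^{2X}e(\beta x)\,dx\ll\min(1,(X|\beta|)^{-1})$, which replaces the sharp frequency cutoff by a slow $1/|\beta|$ decay and forces an extra dyadic decomposition in the size of $|h_1 d_2^2-h_2 d_1^2|$. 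This is a legitimate and marginally more elementary variant (no Beurling--Selberg machinery, no de-smoothing step), and the extra logarithmic loss from the additional dyadic sum is absorbed comfortably since the paper's argument saves a power $H^{-\varepsilon/2}$ at the corresponding point. Three minor cautions to make your write-up rigorous: (i) the factor $\frac1X\min\bigl(1,\frac{d_1^2d_2^2}{X|h_1d_2^2-h_2d_1^2|}\bigr)$ in your off-diagonal display has a spurious extra $\frac1X$ --- the normalized integral already produces $\min(1,(X|\beta|)^{-1})$, nothing more; (ii) the truncation at $|h|\le\mathcal{H}$ and the disposal of the tail are only asserted; since the Fourier series for $\psi$ is merely conditionally convergent, the cross terms between the truncated piece and the tail need a genuine $L^2$ estimate (Cauchy--Schwarz plus a diagonal computation), which the paper sidesteps by truncating at the very large height $N=X^{10}$ with an explicit pointwise error $O(\min\{1,1/(N\|y\|)\})$; (iii) your degenerate-vs.-generic split ($h_1h_2$ a perfect square or not) tacitly assumes $h_1h_2>0$ --- the opposite-sign case must be treated separately, though it is easy because then $|h_1d_2^2-h_2d_1^2|\ge\max(|h_1|d_2^2,|h_2|d_1^2)$ is already large and the constraint $z\le X/H^{1/2+\varepsilon}$ finishes it.
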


\begin{proof}

We prove a smoothed version of the claim first. Let $\sigma\colon \mathbb{R}\rightarrow\mathbb{R}$ be an absolutely integrable function such that $\hat{\sigma}$ is supported in the interval $[-BH^{\varepsilon/2},B H^{\varepsilon/2}]$ for some constant $B$ to be specified later. We first show that as $X\rightarrow\infty$,
\begin{multline}
\label{smooth_funceq_var}
\frac{1}{X} \int_{-\infty}^{\infty} \sigma\left(\frac{x}{X}\right) \Big| \sum_{d^2 \leq z} \mu(d) \sum_{x/d^2 < n \leq (x+H)/d^2} 1 - H \sum_{d^2 \leq z} \frac{\mu(d)}{d^2}\Big|^2\, dx \\
= 2\hat{\sigma}(0) H^2 \sum_{k_1^2,k_2^2 \leq z} \frac{\mu(k_1) \mu(k_2)}{k_1^2 k_2^2} \sum_{\lambda \geq 1} S\Big(\frac{H\lambda}{(k_1^2,k_2^2)}\Big)^2 + O(H^{1/2-\varepsilon/3}).
\end{multline}

Here
\begin{equation}
\label{eq:intpsi}
\sum_{x/d^2 < n \leq (x+H)/d^2} 1 = H/d^2 + \psi(x/d^2) - \psi((x+H)/d^2),
\end{equation}
where $\psi(y) = y - [y] - 1/2$ with $[y]$ the integral part of $y$. For $\psi$ we have the Fourier expansion (see e.g. \cite[(4.18)]{iwaniec2004analytic})
\begin{equation}
\label{eq:psiFour}
\psi(y)  = - \frac{1}{2\pi i} \sum_{0 < |n| \leq N} \frac{1}{n} e(y n) + O(\min\{1, 1/(N\Vert y \Vert)\}).
\end{equation}
We take $N = X^{10}$ and plug~\eqref{eq:psiFour} into~\eqref{eq:intpsi}. The arising error term is $O(1/X^5)$ unless $\Vert x/d^2 \Vert < X^{-5}$ or $\Vert (x+H)/d^2 \Vert < X^{-5}$. Given this, it is easy to see that the error term leads to acceptable contribution to the left hand side~\eqref{smooth_funceq_var}.

Hence, the left hand side of~\eqref{smooth_funceq_var} can be replaced by
\[
\frac{1}{4\pi^2 X} \int_{-\infty}^{\infty} \sigma\left(\frac{x}{X}\right) \Big| \sum_{d^2 \leq z} \mu(d) \sum_{0 < |n| \leq N} \frac{1}{n} e\left(\frac{n x}{d^2}\right)\left(1-e\left(\frac{nH}{d^2}\right)\right)\Big|^2\, dx .
\]
Expanding, this equals
\begin{equation}
\label{LHS_Fourier}
\frac{1}{4 \pi^2}\sum_{d_1^2, d_2^2 \leq z} \sum_{0 < |n_1|, |n_2| \leq N} \mu(d_1) \mu(d_2) \frac{1}{n_1 n_2} \left(1-e\left(\frac{n_1 H}{d_1^2}\right)\right) \overline{\left(1-e\left(\frac{n_2H}{d_2^2}\right)\right)} \hat{\sigma}\Big(-X \Big( \frac{n_1}{d_1^2} - \frac{n_2}{d_2^2}\Big)\Big ).
\end{equation}
Owing to the support of $\hat{\sigma}$ this implies that we may restrict the sum in \eqref{LHS_Fourier} to those integers for which
\begin{equation} \label{eq:constraint}
\Big | \frac{n_1}{d_1^2} - \frac{n_2}{d_2^2} \Big | \leq  \frac{BH^{\varepsilon/2}}{X}.
\end{equation}

We consider separately those $(n_1, n_2, d_1, d_2)$ for which $n_1 d_2^2 = n_2 d_1^2$ and those for which this does not hold. In the first case parameterizing solutions in $n_1$ and $n_2$ by $n_1 = \lambda d_1^2/(d_1^2,d_2^2)$ and $n_2 = \lambda d_2^2 /(d_1^2,d_2^2)$ for $\lambda \in \mathbb{Z}\setminus \{0\}$, we obtain 
\[
\frac{\hat{\sigma}(0)}{4\pi^2} \sum_{d_1^2, d_2^2 \leq z} \mu(d_1) \mu(d_2) \sum_{\lambda \neq 0} \frac{(d_1, d_2)^4}{d_1^2 d_2^2 \lambda^2} \left|1-e\left(\frac{\lambda H}{(d_1, d_2)^2}\right)\right|^2 + O\left(\frac{1}{X^5}\right),
\]
where the error term comes from adding $|n_i| > N$ (for which surely $|\lambda| > X^8$). Here
\[
\left|1-e\left(\frac{\lambda H}{(d_1, d_2)^2}\right)\right| = 2\left|\sin\left(\frac{\lambda \pi H}{(d_1^2, d_2^2)}\right)\right|,
\]
so we get the desired main term involving $S(\lambda H/(d_1^2, d_2^2))$.

Therefore it remains to show that the contribution of terms with $n_1d_2^2 \neq n_2 d_1^2$ is negligible. Splitting $n_j$ and $d_j$ dyadically, we need to bound, for any  $D_1, D_2 \leq z^{1/2}$ and any $N_1, N_2 \leq N$,
\begin{equation} \label{eq:tobound}
\min\left\{\frac{1}{N_1}, \frac{H}{D_1^2}\right\} \min\left\{\frac{1}{N_2}, \frac{H}{D_2^2}\right\} \sum_{\substack{n_1 \sim N_1 \\ n_2 \sim N_2}} \# \left\{(d_1, d_2) \colon d_j \sim D_j, 0 < \Big | \frac{n_1}{d_1^2} - \frac{n_2}{d_2^2} \Big | \leq \frac{BH^{\varepsilon/2}}{X} \right\} 
\end{equation}
and we need a bound that is $O(H^{1/2 - \varepsilon/2})$.
Now 
\begin{equation}
\label{eq:solcount}
\begin{split}
&\# \{(d_1, d_2) \colon d_j \sim D_j, 0 < \Big | \frac{n_1}{d_1^2} - \frac{n_2}{d_2^2} \Big | \leq \frac{BH^{\varepsilon/2}}{X} \} \\
&\ll \# \left\{(d_1, d_2) \colon d_j \sim D_j, 0 < \Big | n_1 d_2^2 - n_2 d_1^2 \Big | \leq 16\frac{BD_2^2 H^{\varepsilon/2}}{X N_2} \cdot D_1^2 N_2\right\}.
\end{split}
\end{equation}
Notice that there are no solutions unless
\begin{equation}
\label{eq:NjDjrel}
N_1 D_2^2 \asymp N_2 D_1^2.
\end{equation}

We split into two cases according to whether $\sqrt{n_2/n_1}$ is quadratic irrational or instead rational. In the first case we can apply Lemma~\ref{le:lat2}, which shows that the number of solutions~\eqref{eq:solcount} is
\[
\begin{split}
& \ll \frac{H^{\varepsilon/2} D_1 D_2^3}{X N_2} + 1+ \frac{D_1^{1/2} D_2^{3/2} N_1^{1/4} H^{\varepsilon/4}}{X^{1/2} N_2^{1/4}}
\end{split}
\]
By~\eqref{eq:NjDjrel} we can multiply the first term by $(D_1/D_2) (N_2/N_1)^{1/2}$ and the third term by $(D_1/D_2)^{1/2} (N_2/N_1)^{1/4}$ to obtain
\[
\ll \frac{H^{\varepsilon/2} D_1^2 D_2^2}{X (N_1N_2)^{1/2}} + 1+ \frac{D_1 D_2 H^{\varepsilon/4}}{X^{1/2}}.
\]

Using this bound in~\eqref{eq:tobound}, and summing over $n_1$ and $n_2$, we note that the maximum is attained for $N_j = D_j^2/H$ and thus the contribution to \eqref{eq:tobound} from $\sqrt{n_2/n_1}$ quadratic irrational is bounded by
\[
\ll H^{\varepsilon/2}\left(\frac{D_1 D_2 H}{X} + 1 +\frac{D_1 D_2}{X^{1/2}} \right) = O(H^{1/2-\varepsilon/2})
\]
since $D_1 \cdot D_2 \leq z \leq \min\{X/H^{1/2+\varepsilon}, H^{1/2-\varepsilon} X^{1/2}\}$.

In case $\sqrt{n_2/n_1}$ is rational, there exist $m, \ell_1, \ell_2 \in \mathbb{Z}$ such that $n_1 = m \ell_1^2$ and $n_2 = m \ell_2^2$. Hence, writing $r_1^2 = \ell_1^2 d_2^2$ and $r_2^2 = \ell_2^2 d_1^2$, we see that the contribution to \eqref{eq:tobound} for $\sqrt{n_2/n_1}$ rational is bounded by
\begin{equation}
\label{eq:nonirrbound}
\begin{split}
&\ll H^{\varepsilon/1000} \min\left\{\frac{1}{N_1}, \frac{H}{D_1^2}\right\} \min\left\{\frac{1}{N_2}, \frac{H}{D_2^2}\right\} \\
&\qquad \times \sum_{m} \#\left\{(r_1, r_2) \colon r_j \leq D_j\sqrt{N_j/m}, 0 < |r_1^2 - r_2^2| \leq \frac{BH^{\varepsilon/2} D_1^2 D_2^2}{m X}\right\} 
\end{split}
\end{equation}
Factoring $r_1^2-r_2^2 = (r_1-r_2)(r_1+r_2)$ and dividing by the second factor, we see that the number of solutions $(r_1, r_2)$ is
\[
\ll \frac{BH^{\varepsilon/2} D_1^2 D_2^2}{m X} \log X
\]
Summing over $m \ll \min\{N_1, N_2\}$ and using this bound in~\eqref{eq:nonirrbound}, the maximum in the resulting bound for ~\eqref{eq:nonirrbound} is attained for $N_j = D_j^2/H$. Hence we obtain that~\eqref{eq:nonirrbound} is at most $H^{2+\varepsilon/2+\varepsilon/500}/X \leq H^{1/2-\varepsilon/2}$ since $H \leq X^{2/3-\varepsilon}$.

Let us now dispose of the smoothing $\sigma$: Take $B$ to be a sufficiently large absolute constant that there exist integrable functions $\sigma_-$ and $\sigma_+$ such that $\widehat{\sigma}_-$ and $\widehat{\sigma}_+$ have support $[-B H^{\varepsilon/2}, BH^{\varepsilon/2}]$, and
$$
\sigma_- \leq \mathbf{1}_{[1,2]} \leq \sigma_+, \quad\textrm{and}\quad \Big| \int \sigma_{\pm}(x)\,dx -1 \Big| \leq H^{-\varepsilon/2}.
$$
(We allow $\sigma_-$ and $\sigma_+$ to take negative values.) An explicit construction of such functions is given by the Beurling-Selberg majorant and minorant \cite[p.~273]{montgomery2001}. Applying \eqref{smooth_funceq_var} and these bounds,
\begin{multline*}
\frac{1}{X} \int_{-\infty}^\infty \mathbf{1}_{[1,2]}\Big(\frac{x}{X}\Big) \Big| \sum_{d^2 \leq z} \mu(d) \sum_{x/d^2 \leq n \leq (x+H)/d^2} 1 - H \sum_{k^2 \leq z} \frac{\mu(k)}{k^2}\Big|^2\, dx \\
= (1+O(H^{-\varepsilon/2})) 2H^2 \sum_{k_1^2,k_2^2 \leq z} \frac{\mu(k_1) \mu(k_2)}{k_1^2 k_2^2} \sum_{\lambda \geq 1} S\Big(\frac{H\lambda}{(k_1^2,k_2^2)}\Big)^2 + O(H^{1/2-\varepsilon/3}).
\end{multline*}
\end{proof}

\section{The range $z \geq H^{4/3+\varepsilon}$ in the $t$-aspect : Proof of Proposition \ref{pr:prop2}}
\label{se:prop2}
We would like to establish that
$$
\frac{1}{X} \int_{X}^{2X} \Big | \sum_{\substack{x < n d^2 \leq x + H \\ d^2 > z}} \mu(d) - H \sum_{\substack{z < d^2 \leq 2 X}} \frac{\mu(d)}{d^2} \Big |^2 dx \ll H^{1/2 - \varepsilon / 8}. 
$$
Splitting into dyadic ranges according to the size of $d$, 
it essentially suffices to show that, for each $D \in [z^{1/2}, (2X)^{1/2}]$, we have
\begin{equation}
\label{eq:varDlar}
\frac{1}{X} \int_{X}^{2X} \Big | \sum_{\substack{x < n d^2 \leq x + H \\ d \sim D}} \mu(d) - H \sum_{d \sim D} \frac{\mu(d)}{d^2} \Big |^2 dx \ll H^{1/2 - \varepsilon / 4}. 
\end{equation}

Let
$$
A(x) := \sum_{\substack{n d^2 \leq x \\ d \sim D}} \mu(d) - x \sum_{d \sim D} \frac{\mu(d)}{d^2}. 
$$
Using this definition and Lemma~\ref{le:SV}, we see that the left-hand side of~\eqref{eq:varDlar} is
\begin{equation} \label{eq:saffari}
\frac{1}{X} \int_{X}^{2X} |A (x + H) - A(x) |^2 dx \ll \frac{1}{X} \int_{X}^{3X} |A(u ( 1+ \theta)) - A(u)|^2 du
\end{equation}
for some $\theta \in [\frac{H}{3X}, \frac{3 H}{X}]$. Choose $w$ such that $e^w = 1 + \theta$, so that $w \asymp \frac{H}{X}$. 
By contour integration
\begin{equation}
\label{eq:Aey}
A(e^y) = \frac{1}{2\pi i} \int_{2-i\infty}^{2+i\infty} \frac{e^{y s}}{s} \zeta(s) M(2s) ds - e^y \sum_{d \sim D} \frac{\mu(d)}{d^2},
\end{equation}
where
$$
M(s) := \sum_{d \sim D} \frac{\mu(d)}{d^{s}}.
$$
Moving the contour to the line $\Re s= 1/2$ we notice that the residue from $s = 1$ cancels with the second term on the right-hand side of~\eqref{eq:Aey}, and we obtain
$$
\frac{A(e^{w + x}) - A(e^x)}{e^{x / 2}} = \frac{1}{2\pi} \int_{\mathbb{R}} \frac{e^{w (\tfrac 12 + it)} - 1}{\tfrac 12 + it} e^{i t x} \zeta(\tfrac 12 + it) M(1 + 2it) dt.
$$
Therefore, by Plancherel,
\begin{equation} \label{eq:plancherel}
\int_{0}^{\infty} | A(e^{u + w}) - A(e^u) |^2 \cdot \frac{du}{e^u} \ll \int_{\mathbb{R}} \Big | \frac{e^{w (\tfrac 12 + it)} - 1}{\tfrac 12 + it} \Big |^2 \cdot |\zeta(\tfrac 12 + it) M(1 + 2it)|^2 dt. 
\end{equation}
Combining \eqref{eq:saffari} and \eqref{eq:plancherel} we get after a change of variable, 
\begin{equation}
\label{eq:Asqdiff}
\begin{split}
  \frac{1}{X} \int_{X}^{2X} |A(x + H) - A(x)|^2 dx & \ll X \int_{0}^{\infty}  |A( u (1 + \theta) ) - A(u) |^2 \frac{du}{u^2} \\
  & \ll X \int_{\mathbb{R}} \Big | \frac{e^{w (\tfrac 12 + it)} - 1}{\tfrac 12 + it} \Big |^2 \cdot |\zeta(\tfrac 12 + it) M(1 + 2it)|^2 dt \\
  & \ll X \int_{\mathbb{R}} \min\Bigl\{ \Bigl(\frac{H}{X}\Bigr)^2, \frac{1}{|t|^2}\Bigr\} \cdot |\zeta(\tfrac 12 + it) M(1 + 2it)|^2 dt.
\end{split}
\end{equation}
By Lemma~\ref{le:ZetaSC} the part with $|t| \geq X^2$ contributes
\[
\ll X \int_{X^2}^\infty |t|^{-5/3+\varepsilon} dt = O(1).
\]
On the other hand, the contribution of $|t|\le X^2$ to the right-hand side of \eqref{eq:Asqdiff} is at most
\begin{align}
\nonumber
&\ll    \frac{H^2}{X}	\int_{|t| \le 2X/H} |\zeta(\tfrac 12 + it) M(1 + 2it)|^2 dt \\
\nonumber
& \qquad \qquad + X \int_{X/H}^{X^2} \frac{1}{T^2} \cdot \frac{1}{T} \int_{T \le |t|\le 2T} |\zeta(\tfrac 12 + it) M(1 + 2it)|^2 dt dT\\
\label{eq:final}
&\ll H \Big ( \sup_{X / H \leq T \leq X^2} \frac{1}{T} \int_{|t| \leq T} |\zeta(\tfrac 12 + it) M(1 + 2it)|^2 dt \Big ) + O(1).
\end{align}

Let us now prove the claim on the assumption of the Lindel\"of Hypothesis. Applying Lindel\"of and then the mean-value theorem (Lemma \ref{le:hmvt} with $q = 1$), we have for any choice of $\delta > 0$,
\[
\begin{split}
\frac{H}{T} \int_{|t|\leq T} |\zeta(1/2+it) M(1+i2t)|^2\, dt &\ll \frac{H T^\delta}{T}\int_{|t|\leq T}|M(1+i2t)|^2\,dt \\ 
&\ll \frac{H T^\delta}{T} (T+D)\cdot \frac{1}{D} \ll \frac{HT^{\delta}}{D} + \frac{HT^\delta}{T}.
\end{split}
\]
Recall we have $D \geq z^{1/2} \geq H^{(1+\varepsilon)/2}, \, H \leq X^{2/3-\varepsilon}$ and $X/H \leq T \leq X^2$. Hence the above is
$$
\ll H^{1/2-\varepsilon/2} T^{\delta} + \frac{H^{2-\delta}}{X^{1-\delta}} \ll H^{1/2 - \varepsilon/4} + \frac{H^{2-\delta}}{H^{(1-\delta)/(2/3-\varepsilon)}} \ll H^{1/2-\varepsilon/4},
$$
for $\delta$ sufficiently small. Applying this bound to \eqref{eq:final} yields the claim.

Let us now prove the unconditional part of the proposition. 
First notice that the values of $t$ for which $|M(1+2it)| \leq D^{-1/2 + \varepsilon/16}$ contribute to~\eqref{eq:final} by Cauchy-Schwarz and the fourth moment bound (Lemma~\ref{le:ZetaFourth}) $O(H^{1+\varepsilon/16}D^{-1+\varepsilon/8}) = O(H^{1/2-\varepsilon/4})$, and therefore their contribution is always acceptable.
Writing 
\[
S(V) = \{t \in [-T, T] \colon V \leq |M(1+2it)| < 2V\},
\]
by dyadic splitting, it suffices to show that, for each $V \in [D^{-1/2}, 1]$ and $T \in [X/H, X^2]$, we have
\[
\frac{H}{T} V^2 \int_{S(V)} |\zeta(1/2+it)|^2 dt \ll H^{1/2-\varepsilon/3}.
\]
Now by Lemma~\ref{le:LVT} we have
\begin{equation}
\label{eq:S(V)bound}
|S(V)| \ll (V^{-2} + T\min\{D^{-1}V^{-2}, D^{-2} V^{-6}\}) (\log 2X)^6.
\end{equation}

Consider first the case when the first term dominates here. Then by Lemma~\ref{le:ZetaSC} we have
\[
\frac{H}{T} V^2 \int_{S(V)} |\zeta(1/2+it)|^2 dt \ll \frac{H}{T} T^{1/3+\varepsilon/2} \ll \frac{H}{T^{2/3-\varepsilon/2}} \ll \frac{H^{5/3}}{X^{2/3-\varepsilon/2}} \leq H^{1/2-\varepsilon/3}
\] 
since $H \leq X^{4/7-\varepsilon}$. 

Consider now the case that the second term dominates in~\eqref{eq:S(V)bound}. Then, by Cauchy-Schwarz and the fourth moment estimate (Lemma~\ref{le:ZetaFourth}),
\begin{align*}
&\frac{H}{T} V^2 \int_{S(V)} |\zeta(1/2+it)|^2 dt \ll \frac{H}{T} V^2 |S(V)|^{1/2} \left(\int_{|t| \leq T} |\zeta(\tfrac 12 + it)|^4 dt \right )^{1/2} \\
& \ll H V^2 \min\{D^{-1}V^{-2}, D^{-2} V^{-6}\}^{1/2} (\log 2X)^5 \ll H\min\{D^{-1/2} V, D^{-1} V^{-1}\} (\log 2X)^5 \\
&\ll H (D^{-1/2} V)^{1/2} (D^{-1} V^{-1})^{1/2} (\log 2X)^5 \ll H D^{-3/4} (\log 2X)^5 \ll H z^{-3/8} (\log 2X)^5 \ll H^{1/2-\varepsilon/3}
\end{align*}
since $z \geq H^{4/3+\varepsilon}$. This finishes the proof of Proposition~\ref{pr:prop2}.

\section{The range $(x/q)^{1+\varepsilon} \leq z < x^{-\varepsilon} \sqrt{qx}$ in the $q$-aspect : Proof of Proposition \ref{prop:prop1q}}
\label{se:prop1q} 
By~\eqref{eq:transWOchi0} Proposition~\ref{prop:prop1q} follows immediately from the following proposition.
\begin{proposition} \label{prop:asymptqaspect}
    Let $\varepsilon \in (0, 1/100)$. Let $q$ be prime with $x^{1/3 + 30 \varepsilon} \leq q \leq x^{1 - \varepsilon}$ and let $(x / q)^{1 + \varepsilon} \leq z \leq x^{-\varepsilon} \sqrt{q x}$. Then 
    \begin{equation} \label{eq:toeval}
    \frac{1}{\varphi(q)} \sum_{\substack{\chi \Mod{q} \\ \chi \neq \chi_0 }} \Big | \sum_{\substack{d^2 \leq z \\ n d^2 \leq x}} \mu(d) \chi(d^2) \chi(n) \Big |^2 = C \sqrt{qx} + O((x/q)^{-\varepsilon/16} \sqrt{qx})
    \end{equation}
    with $C$ as in~\eqref{eq:C_dfn}.
  \end{proposition}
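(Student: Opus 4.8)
The plan is to mimic the proof of Proposition~\ref{thm:smalldivisors}, with Poisson summation in residue classes modulo $q$ taking the place of the Fourier expansion of the sawtooth. First I would use the equality of the middle and right members of~\eqref{eq:transWOchi0} to rewrite the left side of~\eqref{eq:toeval} as $\sum_{(a,q)=1}|E(a)|^2$, where $E(a)$ is the discrepancy of $\sum_{d^2\le z,\, nd^2\le x}\mu(d)$ in the progression $a\Mod q$. Since $q$ is prime and the hypotheses $q\ge x^{1/3+30\varepsilon}$, $z\le x^{-\varepsilon}\sqrt{qx}$ together force $q^2>z$, every $d$ with $d^2\le z$ is automatically coprime to $q$, and moreover $D\le z^{1/2}<q$; this disposes of essentially all coprimality bookkeeping. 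For each such $d$ I would detect $nd^2\equiv a\Mod q$ by completing the sum over $n\le x/d^2$ modulo $q$ --- equivalently, by inserting the truncated Fourier expansion~\eqref{eq:psiFour} of the associated sawtooth, after a harmless Beurling--Selberg smoothing of the cut-off $nd^2\le x$ if desired. The zero frequency matches the subtracted mean up to an error $O(z^{1/2}/\varphi(q))$, which propagates admissibly by Cauchy--Schwarz; what remains is, up to admissible errors, a main expression of the form
\[
\frac{1}{q^2}\sum_{d_1^2,d_2^2\le z}\mu(d_1)\mu(d_2)\sum_{h_1,h_2\ne 0}\Phi_{d_1}(h_1)\overline{\Phi_{d_2}(h_2)}\Bigl(q\,\mathbf 1[\,q\mid h_1d_2^2-h_2d_1^2\,]-1\Bigr),
\]
where $\Phi_d(h)=\frac{q}{2\pi i h}\bigl(1-e(-xh/(qd^2))\bigr)=\int_0^{x/d^2}e(-th/q)\,dt$ obeys $|\Phi_d(h)|\ll\min\{x/d^2,\,q/|h|\}$, and the contribution of the ``$-1$'' is $O(z)$, hence admissible.

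Next I would isolate the diagonal $h_1d_2^2=h_2d_1^2$. Parameterising $h_i=\lambda d_i^2/(d_1^2,d_2^2)$ with $\lambda\in\mathbb Z\setminus\{0\}$, both transforms $\Phi_{d_i}(h_i)$ acquire the common argument $x\lambda/(q(d_1^2,d_2^2))$ inside the exponential, and using $|1-e(\theta)|=2|\sin\pi\theta|$ the product $\Phi_{d_1}(h_1)\overline{\Phi_{d_2}(h_2)}$ collapses to $\tfrac{x^2}{d_1^2 d_2^2}\,S\!\bigl(\tfrac{(x/q)\lambda}{(d_1^2,d_2^2)}\bigr)^2$. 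Hence the diagonal equals
\[
\frac{2x^2}{q}\sum_{d_1^2,d_2^2\le z}\frac{\mu(d_1)\mu(d_2)}{d_1^2 d_2^2}\sum_{\lambda\ge 1}S\!\Bigl(\frac{(x/q)\lambda}{(d_1^2,d_2^2)}\Bigr)^2,
\]
and since $z\ge(x/q)^{1+\varepsilon}$, Lemma~\ref{le:asymptS} applied with $H=x/q$ evaluates this to $C\sqrt{qx}+O\bigl((x/q)^{-\varepsilon/8}\sqrt{qx}\bigr)$, which is precisely the claimed main term.

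Finally --- and this is the step I expect to be the main obstacle --- one must show that the off-diagonal contribution, i.e.\ the part with $h_1d_2^2-h_2d_1^2=qr$ and $r\ne 0$, is $O\bigl((x/q)^{-\varepsilon/16}\sqrt{qx}\bigr)$. Here I would split dyadically into ranges $d_i\sim D_i$ (with $D_i^2\le z$), $h_i\sim H_i$, and $|r|\sim R$, bound the transforms by $|\Phi_{d_i}(h_i)|\ll\min\{x/D_i^2,\,q/H_i\}$, and estimate the number of $(d_1,d_2,h_1,h_2)$ with $0<|h_1d_2^2-h_2d_1^2|\le qR$ subject to the congruence $h_1d_2^2\equiv h_2d_1^2\Mod q$. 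For this count I would combine: (i) the congruence modulo $q$, which pins $h_2$ (resp.\ $d_1$) to a single residue class; (ii) the fact that $D_1,D_2<q$, so that for fixed $h_1,h_2,d_2$ there are at most two admissible $d_1$; and (iii) for the remaining ranges, the binary-quadratic-form estimate of Lemma~\ref{le:lat2} (equivalently Lemma~\ref{le:lat}), exploiting that $\sqrt{h_1/h_2}$ is a quadratic irrational whose continued-fraction partial quotients are $O(\sqrt{h_1h_2})$ --- the ``positive discriminant'' case --- with the degenerate case $h_1/h_2\in(\mathbb Q^\times)^2$ treated separately by factoring $r_1^2-r_2^2=(r_1-r_2)(r_1+r_2)$ exactly as in the proof of Proposition~\ref{thm:smalldivisors}. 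Summing the resulting bounds against the weights $\min\{x/D_i^2,\,q/H_i\}$ over all dyadic ranges, one then has to check that the constraints $q\ge x^{1/3+30\varepsilon}$ and $z\le x^{-\varepsilon}\sqrt{qx}$ are exactly what is needed to keep every contribution below $(x/q)^{-\varepsilon/16}\sqrt{qx}$. This balancing of the ranges, together with the careful handling of the conditionally convergent $h$-sums and of the non-smooth cut-off (these being the technical wrinkles that make this proposition more delicate than Proposition~\ref{thm:smalldivisors}), is where the bulk of the work lies.
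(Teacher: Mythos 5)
Your reduction via \eqref{eq:transWOchi0}, the Poisson summation in $n$, the identification of the diagonal $h_1 d_2^2 = h_2 d_1^2$, and its evaluation by Lemma~\ref{le:asymptS} with $H=x/q$ all match the paper's proof (which proceeds through Propositions~\ref{prop:smoothing} and~\ref{prop:asympt}). But the off-diagonal bound is exactly where the $q$-aspect differs from the $t$-aspect, and the tool you propose there does not work. In Proposition~\ref{thm:smalldivisors} the off-diagonal constraint is a genuine Diophantine \emph{inequality} $|n_1/d_1^2 - n_2/d_2^2| \ll H^{\varepsilon/2}/X$, for which the continued-fraction count of Lemma~\ref{le:lat2} is the right device. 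Here the constraint is the \emph{congruence} $q \mid h_1 d_2^2 - h_2 d_1^2$; the inequality $|h_1 d_2^2 - h_2 d_1^2| \le qR$ that you would feed into Lemma~\ref{le:lat2} is typically comparable in size to $H_1 D_2^2$ itself, so the first term $M_1 M_2/T$ of that lemma degenerates to the trivial count $\asymp D_1 D_2$, and a lattice count based on rational approximations to $\sqrt{h_1/h_2}$ cannot see divisibility by $q$. Your item (ii), pinning $d_1 \Mod{q}$ to at most two values, gives $\ll H_1 H_2 D_2$, but after weighting by $\min\{x/D_i^2, q/H_i\}$ this is $\ll q D_2 \ll q z^{1/2}$, which overshoots the target $(x/q)^{-\varepsilon/16}\sqrt{qx}$ by $\gg (q^3/x)^{1/4} \ge x^{20\varepsilon}$ for every $q \ge x^{1/3+30\varepsilon}$.

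The paper closes this gap with a dichotomy you do not have. When $D_1 D_2 \ge x^{1+2\varepsilon}/q$ the off-diagonal restriction is discarded entirely, the $d$- and $n$-sums are rewritten via Dirichlet characters, and the bound comes from H\"older together with the hybrid fourth-moment estimate for $L$-functions (Lemma~\ref{le:hybridFourth}). When $D_1 D_2 < x^{1+2\varepsilon}/q$, the dual variables are $\ll x^{7\varepsilon}$, and the paper fixes $(n_1, n_2, \ell)$ and counts solutions $(d_1, d_2)$ of the Pell-type equation $n_1 d_1^2 - n_2 d_2^2 = q\ell$: after grouping the solutions into orbits $T_m^\pm$ under the automorphism group, Theorem~4.1 of Williams \cite{Williams2} on primary representations by indefinite binary quadratic forms gives $\ll x^{9\varepsilon}$ solutions per fixed $(n_1, n_2, \ell)$. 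This class-number/divisor-bound input is not something Lemma~\ref{le:lat2} or Lemma~\ref{le:lat} can deliver. A smaller but real issue is that the Beurling--Selberg smoothing of the cut-off $nd^2 \le x$ is not ``harmless'' in this aspect: in the $t$-aspect the majorant/minorant is applied to the $x$-integral weight $\mathbf{1}_{[1,2]}(x/X)$, whereas here there is no such integral, and controlling the $L^2$ error over residue classes from the tail intervals of length $\sim x(x/q)^{-\varepsilon/4}$ requires Proposition~\ref{prop:smoothing}, itself proved via P\'olya's formula, a majorant principle, and the hybrid fourth moment.
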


  The proof of Proposition \ref{prop:asymptqaspect} is based on two Propositions that we now describe. Proposition \ref{prop:smoothing} below will be used to introduce
  a smoothing into \eqref{eq:toeval}. Note that it gives an upper bound that is $o(\sqrt{qx})$ whenever $z = o(\sqrt{qx}/(\log x)^{6})$ and the interval $I$ has length $o(x/(\log x)^{12})$.

\begin{proposition} \label{prop:smoothing}
  Let $q$ be prime with $q \leq x,$ let $z \leq x$. 
  Let $I\subset [1,2 x]$ be an interval. Then 
  \begin{equation}
  \label{eq:Istat}
  \frac{1}{\varphi(q)} \sum_{\substack{\chi \Mod{q} \\ \chi \neq \chi_0 }} \Big | \sum_{\substack{d^2 \leq z \\ n d^2 \in I}}\mu(d) \chi(d^2) \chi(n)  \Big |^2 \ll (\log x)^6 \cdot \Big (z + \sqrt{|I| q} \Big ). 
  \end{equation}
\end{proposition}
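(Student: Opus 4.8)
The plan is to expand the left-hand side using the orthogonality relation \eqref{eq:transWchi0} (or rather \eqref{eq:transWOchi0}, which is the version with $\chi_0$ removed) and then count lattice points. Write $b_m = \sum_{d^2 \le z,\, nd^2 = m,\, nd^2 \in I} \mu(d)$, so that the left-hand side of \eqref{eq:Istat} equals $\sum_{m_1 \equiv m_2 \Mod q,\, (m_1 m_2, q)=1} b_{m_1} \overline{b_{m_2}}$ minus a nonnegative diagonal-type term, and hence is at most $\sum_{m_1 \equiv m_2 \Mod q} b_{m_1} \overline{b_{m_2}}$. Opening up $b_{m_1}$ and $b_{m_2}$, this becomes
\[
\sum_{\substack{d_1^2, d_2^2 \le z}} \mu(d_1)\mu(d_2) \sum_{\substack{n_1 d_1^2,\, n_2 d_2^2 \in I \\ n_1 d_1^2 \equiv n_2 d_2^2 \Mod q}} 1.
\]
After splitting $d_1, d_2$ into dyadic ranges $d_j \sim D_j$ with $D_j \le z^{1/2}$, it suffices to bound, for each such pair, the number of quadruples $(d_1, d_2, n_1, n_2)$ with $d_j \sim D_j$, $n_j d_j^2 \in I$, and $n_1 d_1^2 \equiv n_2 d_2^2 \pmod q$, and to show the total is $\ll (\log x)^6 (z + \sqrt{|I|q})$.

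The key step is the lattice-point count modulo $q$. Fix $d_1 \sim D_1$ and $d_2 \sim D_2$. Since $q$ is prime and (in the surviving terms) $q \nmid d_1 d_2$, the congruence $n_1 d_1^2 \equiv n_2 d_2^2 \pmod q$ determines $n_2$ modulo $q$ once $n_1$ is fixed (and conversely). The number of $n_1$ with $n_1 d_1^2 \in I$ is $\ll |I|/D_1^2 + 1$, and for each such $n_1$ the number of admissible $n_2$ with $n_2 d_2^2 \in I$ lying in a fixed residue class mod $q$ is $\ll |I|/(D_2^2 q) + 1$. Multiplying and summing over $d_1 \sim D_1$, $d_2 \sim D_2$ gives a bound of the shape
\[
\ll \sum_{d_1 \sim D_1} \sum_{d_2 \sim D_2} \Bigl(\frac{|I|}{D_1^2} + 1\Bigr)\Bigl(\frac{|I|}{D_2^2 q} + 1\Bigr)
\ll \frac{|I|^2}{q D_1 D_2} + \frac{|I|}{q} D_2 + |I| \frac{1}{D_1} + D_1 D_2 ,
\]
where one should also keep the symmetric version (exchanging the roles of $n_1, n_2$, i.e. counting $n_2$ first), so that one may assume $D_1 \le D_2$ say. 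By AM--GM the cross term $|I|^2/(qD_1 D_2)$ is, when balanced against $D_1 D_2$, at most $|I|/\sqrt q \le \sqrt{|I|q}$ (using $|I| \le 2x$, $q \le x$ is not even needed here), while $D_1 D_2 \le z$; the remaining terms $(|I|/q)D_2$ and $|I|/D_1$ are handled similarly, being $\ll z + \sqrt{|I|q}$ in the relevant ranges $D_j \le z^{1/2} \le x^{1/2}$. Summing over the $O((\log x)^2)$ dyadic pairs $(D_1, D_2)$ costs a factor $(\log x)^2$; the displayed power $(\log x)^6$ in the statement is a comfortable over-estimate that also absorbs divisor-type losses if one prefers to bound $|b_m| \ll d(m)$ rather than exploiting the $\mu(d)$ structure.

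The main obstacle is purely bookkeeping: making sure that in each of the several regimes (which of $D_1, D_2$ is larger, whether $|I|/D_j^2$ or $1$ dominates, whether one counts $n_1$ or $n_2$ first) the resulting monomial in $D_1, D_2, |I|, q$ is genuinely $\ll z + \sqrt{|I|q}$ under the constraints $D_j \le z^{1/2}$, $|I| \le 2x$, $z \le x$, $q \le x$. A clean way to organize this is to observe that the count is always $\ll (|I|/D_1^2 + 1)$ times $(|I|/(D_2^2 q) + 1)$ times $D_1 D_2$ with $D_1 \le D_2$, i.e. $\ll |I|^2/(q D_1 D_2) + |I| D_2/(D_2^2) + \cdots$; expanding the product of two binomials gives four terms, each of which is maximized at an endpoint $D_j \in \{1, z^{1/2}\}$, and one simply checks the four resulting values. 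No analytic input beyond counting is needed, which matches the remark in the excerpt that this proposition, like Proposition~\ref{pr:prop1}, depends only on ``convex'' inputs.
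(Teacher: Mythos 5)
There is a genuine and fatal gap in the very first step. You drop the term $\frac{1}{\varphi(q)}\bigl|\sum_{(n,q)=1}b_n\bigr|^2$ from \eqref{eq:transWOchi0} on the grounds that it is nonnegative, and replace the left side by $\sum_{m_1\equiv m_2\,(q)}b_{m_1}\overline{b_{m_2}}$. That term is not a harmless diagonal correction: it is precisely the mean-subtraction that makes the quantity a variance, and discarding it destroys all the cancellation. Concretely, by the second line of \eqref{eq:transWOchi0} the left side equals $\sum_{(a,q)=1}\bigl|B_a-\bar B\bigr|^2$ with $B_a=\sum_{n\equiv a}b_n$ and $\bar B=\frac{1}{\varphi(q)}\sum_{(n,q)=1}b_n$, whereas your bound retains $\sum_a|B_a|^2$. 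Already the $n_1=n_2$ diagonal in your expression contributes $\sum_n|b_n|^2\gg|I|$, and the off-diagonal contributes of order $|I|^2/q$; for $|I|\asymp x$ and $q\ll x$ both of these are $\gg\sqrt{|I|q}$, while $z$ can be taken small. For example with $z<4$ (so $b_m=\mathbf 1_{m\in I}$), $|I|\asymp x$ and $q=2$, your bound is $\asymp x^2$, but the left side of \eqref{eq:Istat} is $\ll1$ by perfect cancellation of $\sum_{n\in I}\chi(n)$, and the claimed right side is $\ll(\log x)^6\sqrt{x}$. So the lattice-point count cannot possibly produce the stated estimate after the subtraction is thrown away.

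What the paper does instead is keep the character-sum structure and extract the cancellation over $n$ explicitly: it applies P\'olya's formula to $\sum_{n\in I/d^2}\chi(n)$, converting the $n$-sum into a short Fourier-type sum of length $q$ with the decaying weight $f_{I/d^2}(n)$, then uses a majorant principle (passing through \eqref{eq:transWchi0}) followed by Cauchy--Schwarz and the hybrid fourth moment (Lemma~\ref{le:hybridFourth}). This is an ``analytic'' route, not a pure lattice-point count, and that is what produces the factor $\sqrt{|I|q}$ rather than $|I|^2/q$. If you want a combinatorial argument you would have to keep track of the $-\varphi(q)|\bar B|^2$ term and show that it cancels the main bulk of $\sum_a|B_a|^2$ to high accuracy, which is substantially more delicate than counting congruent quadruples.
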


We will use the following proposition to evaluate the smoothed analogue of \eqref{eq:toeval}.

  \begin{proposition} \label{prop:asympt}
    Let $\varepsilon > 0$ be given. Let $f$ be a smooth function such that $f$ is compactly supported on $[0,1]$ and $f(u) = 1$ for $(x/q)^{-\varepsilon/4} \leq u \leq 1 - (x/q)^{-\varepsilon/4}$ and for each integer $k \geq 0$, we have $f^{(k)}(u) \ll (x/q)^{\varepsilon k/4}$. Let $(x / q)^{1 + \varepsilon} \leq z \leq x^{-\varepsilon} \sqrt{qx}$. Then for $x^{1/3 + 30 \varepsilon} \leq q \leq x^{1 - \varepsilon}$,
    $$
    \frac{1}{\varphi(q)} \sum_{\substack{\chi \Mod{q} \\ \chi \neq \chi_0 }} \Big | \sum_{\substack{ n \geq 1 \\ d^2 \leq z}} f \Big ( \frac{n d^2}{x} \Big ) \mu(d) \chi(d^2) \chi(n) \Big |^2 = C \sqrt{q x} + O((x/q)^{-\varepsilon/10} \sqrt{qx}),
    $$
    where $C$ is as in~\eqref{eq:C_dfn}.
  \end{proposition}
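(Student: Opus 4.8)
The plan is to imitate the proof of Proposition~\ref{thm:smalldivisors} in the $q$-aspect: pass to a dual sum via Poisson summation and orthogonality of characters, isolate the diagonal as the main term (to be evaluated with Lemma~\ref{le:asympt}), and control the off-diagonal by counting integer points on binary quadratic forms using Lemmas~\ref{le:lat} and~\ref{le:lat2}.

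\emph{Step 1 (Poisson and Gauss sums).} First write $b_n=f(n/x)\sum_{d^2\mid n,\ d^2\le z}\mu(d)$, so that the inner sum in the proposition is $\sum_n b_n\chi(n)$. One checks from $q\ge x^{1/3+30\varepsilon}$ and $z\le x^{-\varepsilon}\sqrt{qx}$ that $\sqrt z<q$, so every $d$ with $d^2\le z$ is coprime to the prime $q$. For $\chi\neq\chi_0$ (hence primitive) I would split $n=d^2m$ and apply Poisson summation in $m$ over residues modulo $q$, together with $\sum_{r\Mod q}\chi(r)e(\ell r/q)=\overline{\chi(\ell)}\tau(\chi)$, to get
\[
\sum_n b_n\chi(n)=\frac{x\,\tau(\chi)}{q}\sum_{d^2\le z}\frac{\mu(d)\chi(d^2)}{d^2}\sum_{\ell\neq0}\widehat f\!\Bigl(\frac{\ell x}{qd^2}\Bigr)\overline{\chi(\ell)};
\]
the $\ell=0$ term drops because $\chi$ is nonprincipal, which makes the expected main term disappear without any delicate cancellation. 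Since $|\tau(\chi)|^2=q$, the left-hand side of the proposition becomes
\[
\frac{x^2}{q}\cdot\frac1{\varphi(q)}\sum_{\chi\neq\chi_0}\Bigl|\sum_{d^2\le z}\frac{\mu(d)\chi(d^2)}{d^2}\sum_{\ell\neq0}\widehat f\!\Bigl(\frac{\ell x}{qd^2}\Bigr)\overline{\chi(\ell)}\Bigr|^2 .
\]

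\emph{Step 2 (the diagonal).} Expanding the square and using $\frac1{\varphi(q)}\sum_{\chi\neq\chi_0}\chi(d_1^2\ell_2)\overline{\chi(d_2^2\ell_1)}=\mathbf 1_{q\,\mid\,(d_1^2\ell_2-d_2^2\ell_1)}-\frac1{\varphi(q)}$ (the condition $(\ell_1\ell_2,q)=1$ being forced), the quantity splits into the congruence sum over $d_1^2\ell_2\equiv d_2^2\ell_1\Mod q$ minus a term $\tfrac{x^2}{q\varphi(q)}\bigl|\sum_{d^2\le z}\tfrac{\mu(d)}{d^2}\sum_{\ell\neq0}\widehat f(\tfrac{\ell x}{qd^2})\bigr|^2$. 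The latter is small: Poisson backwards gives $\sum_{\ell}\widehat f(\tfrac{\ell x}{qd^2})=\tfrac{qd^2}{x}\sum_m f(\tfrac{mqd^2}{x})$, which vanishes for $d^2>x/q$ (as $f(0)=0$) and is $\widehat f(0)+O(\text{small})$ otherwise, so the inner $d$-sum is $\ll(q/x)^{1/2-\varepsilon/8}$ and the whole term is $\ll(x/q)^{1+O(\varepsilon)}\ll(x/q)^{-\varepsilon/10}\sqrt{qx}$ by $q\ge x^{1/3+30\varepsilon}$. In the congruence sum the diagonal $d_1^2\ell_2=d_2^2\ell_1$, parametrised by $\ell_i=\lambda d_i^2/(d_1^2,d_2^2)$ with $\lambda\ge1$, contributes
\[
q\cdot 2\Bigl(\frac xq\Bigr)^{\!2}\sum_{d_1^2,d_2^2\le z}\frac{\mu(d_1)\mu(d_2)}{d_1^2d_2^2}\sum_{\lambda\ge1}\Bigl|\widehat f\!\Bigl(\frac{(x/q)\lambda}{(d_1^2,d_2^2)}\Bigr)\Bigr|^2 .
\]
Integration by parts, using $f^{(k)}\ll(x/q)^{\varepsilon k/4}$ and $|\mathrm{supp}\,f|\le1$, shows $\widehat f$ satisfies the hypotheses of Lemma~\ref{le:asympt} with $H$ replaced by $x/q$; since $z\ge(x/q)^{1+\varepsilon}$ that lemma evaluates the display as $q\bigl(C(x/q)^{1/2}\pi\int_0^\infty|\widehat f(y)|^2\sqrt y\,dy+O((x/q)^{1/2-\varepsilon/5})\bigr)$. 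Finally, writing $f=\mathbf 1_{[0,1]}+g$ with $g$ supported on two intervals of length $(x/q)^{-\varepsilon/4}$ (so $\widehat f(y)=e(-y/2)S(y)+\widehat g(y)$ with $|\widehat g(y)|\ll\min((x/q)^{-\varepsilon/4},|y|^{-1})$), the identity~\eqref{eq:sinc_integral} and Cauchy--Schwarz give $\int_0^\infty|\widehat f(y)|^2\sqrt y\,dy=\tfrac1\pi+O((x/q)^{-\varepsilon/16})$, so the diagonal equals $C\sqrt{qx}+O((x/q)^{-\varepsilon/16}\sqrt{qx})$.

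\emph{Step 3 (the off-diagonal, and the main obstacle).} It remains to show
\[
\frac{x^2}{q}\sum_{\substack{\ell_1,\ell_2\neq0,\ d_1^2,d_2^2\le z\\ d_1^2\ell_2\equiv d_2^2\ell_1\Mod q,\ d_1^2\ell_2\neq d_2^2\ell_1}}\frac{|\mu(d_1)\mu(d_2)|}{d_1^2d_2^2}\,\Bigl|\widehat f\!\Bigl(\frac{\ell_1 x}{qd_1^2}\Bigr)\Bigr|\Bigl|\widehat f\!\Bigl(\frac{\ell_2 x}{qd_2^2}\Bigr)\Bigr|\ll(x/q)^{-\varepsilon/10}\sqrt{qx}.
\]
Using the rapid decay of $\widehat f$ one truncates $\ell_j\ll(qd_j^2/x)(x/q)^{\varepsilon/4}$ (keeping the tails) and decomposes dyadically, $d_j\sim D_j$, $\ell_j\sim L_j$. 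Setting $d_1^2\ell_2-d_2^2\ell_1=kq$ with $k\neq0$ one finds $|k|\ll(D_1^2D_2^2/x)(x/q)^{\varepsilon/4}$, so there are no terms unless $D_1D_2\gg x^{1/2-\varepsilon}$, which in particular removes the bulk of the mass of $\sum 1/d^2$; combined with $D_1D_2\le z\le x^{-\varepsilon}\sqrt{qx}$ this confines the remaining ranges. I would then split, exactly as in the proof of Proposition~\ref{thm:smalldivisors}, according to whether $\ell_1\ell_2$ is a perfect square: if it is, write $\ell_i=ef_i^2$ and factor $e(f_1d_2-f_2d_1)(f_1d_2+f_2d_1)=kq$, reducing the count to a divisor bound; if it is not, write $\ell_i=e_if_i^2$ with $e_1\neq e_2$ squarefree, so $d_1^2\ell_2-d_2^2\ell_1=e_2(f_2d_1)^2-e_1(f_1d_2)^2$ is an \emph{indefinite} binary quadratic form of discriminant $4e_1e_2>0$, and count solutions of $e_1(f_1d_2)^2\equiv e_2(f_2d_1)^2\Mod q$ — equivalently $f_1d_2\equiv\pm t\,f_2d_1\Mod q$ with $t^2\equiv e_2\overline{e_1}\Mod q$ — via Lemmas~\ref{le:lat} and~\ref{le:lat2}, extracting the saving from the distribution of $t$ (and of the ratios $d_1/d_2$, $\ell_1/\ell_2$) over the summation variables rather than from a worst-case estimate. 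The bottleneck sits precisely here: whereas in the $t$-aspect of Proposition~\ref{thm:smalldivisors} the off-diagonal gap is a smallness condition to which Lemma~\ref{le:lat2} applies with a genuinely large parameter, here the gap is a divisibility by $q$, and a crude lattice-point count with the worst-case rational slope $t/q$ overshoots the target $\sqrt{qx}$ by a power of $x/q$. Organising the case analysis — and carefully tracking the $\widehat f$-tails and the factorisation overheads — so that every piece is $O((x/q)^{-\varepsilon/10}\sqrt{qx})$ throughout the full range $q\ge x^{1/3+30\varepsilon}$, $(x/q)^{1+\varepsilon}\le z\le x^{-\varepsilon}\sqrt{qx}$ is where the real work lies.
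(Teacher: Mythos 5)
Your Steps 1 and 2 track the paper closely and are essentially correct: the Poisson/Gauss-sum dualisation, the reduction to $d_1^2\ell_2 = d_2^2\ell_1$, the application of Lemma~\ref{le:asympt} with $W=\hat f$, and the comparison of $\hat f$ with the sinc function via~\eqref{eq:sinc_integral} all match the paper. (The paper phrases the diagonal slightly differently, via the set $\mathcal M$ of pairs $(\ell_1,\ell_2)$ whose ratio is a rational square, but then shows the congruence together with $z\le x^{-\varepsilon}\sqrt{qx}$ forces $d_1^2\ell_2=d_2^2\ell_1$ inside $\mathcal M$, so the two decompositions agree; in particular your ``$\ell_1\ell_2$ a perfect square'' off-diagonal subcase is vacuous, not merely amenable to the factoring $e(f_1d_2-f_2d_1)(f_1d_2+f_2d_1)=kq$.)

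The genuine gap is precisely where you flag it: Step 3. You propose to treat the off-diagonal congruence $d_1^2\ell_2\equiv d_2^2\ell_1\Mod q$, $d_1^2\ell_2\neq d_2^2\ell_1$, by the continued-fraction lattice-point Lemmas~\ref{le:lat}, \ref{le:lat2}, and you correctly observe this overshoots $\sqrt{qx}$ --- which is why those lemmas are \emph{not} used in the paper's proof of this proposition. The paper instead splits on the size of $D_1D_2$. When $D_1D_2\ge x^{1+2\varepsilon}/q$ it discards the off-diagonal constraint entirely, reverts to a sum over characters, and applies H\"older together with the hybrid fourth moment bound for $L(\tfrac12+it,\chi)$ (Lemma~\ref{le:hybridFourth}), giving $\ll x^{-\varepsilon/6}\sqrt{qx}$. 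When $D_1D_2<x^{1+2\varepsilon}/q$ one has $N_1,N_2\ll x^{7\varepsilon}$, so after writing $n_1d_1^2-n_2d_2^2=q\ell$ with $0<|\ell|\ll x^{1+13\varepsilon}/q^2$ one may \emph{fix} $(n_1,n_2,\ell)$ (at most $\ll x^{1+27\varepsilon}/q^2$ choices) and bound the number of $(d_1,d_2)$ with $n_1d_1^2-n_2d_2^2=q\ell$ in the admissible boxes by invoking the classification of primary representations of $q\ell$ by the indefinite form $n_1x^2-n_2y^2$ (Williams's results on automorph orbits) and a divisor bound, yielding $\ll x^{9\varepsilon}$ per choice and $\ll x^{1+40\varepsilon}/q\ll x^{-\varepsilon}\sqrt{qx}$ in total, using $q>x^{1/3+30\varepsilon}$. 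Neither the hybrid fourth moment of $L$-functions nor the representation-number count for indefinite binary quadratic forms appears in your proposal, and without one of these (or a substitute) the off-diagonal is not controlled; the ``real work'' you defer is exactly the content you are missing.
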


One way to construct $f$ satisfying the assumptions of the proposition is to take $\phi(t)$ to be a smooth function which vanishes for negative $t$ and has $\phi(t)=1$ for $t$ greater than $1$, and then set $f(u) = \phi((x/q)^{\varepsilon/4} u) \phi((x/q)^{\varepsilon/4} (1-u))$.

With these two propositions at hand we are ready to prove Proposition \ref{prop:asymptqaspect}. 
 
  \subsection{Proof of Proposition \ref{prop:asymptqaspect}}
For $m \in \mathbb{N}$, set
$$
A_m := \sum_{\substack{d^2 \mid m \\ d^2 \leq z}} \mu(d),
$$
and let $f$ be as described below Proposition~\ref{prop:asympt}.
Then
$$
\frac{1}{\varphi(q)}\sum_{\substack{\chi \Mod{q} \\ \chi \neq \chi_0 }} \Big | \sum_{\substack{n \leq x}} A_n \chi(n)\Big |^2  = S_1 + O(\sqrt{S_1 S_2} + S_2),
$$
where
$$
S_1 := \frac{1}{\varphi(q)}\sum_{\substack{\chi \Mod{q} \\ \chi \neq \chi_0 }}  \Big | \sum_{n} A_n\chi(n) f \Big (\frac{n}{x} \Big ) \Big |^2 
$$
and
$$
S_2 :=  \frac{1}{\varphi(q)}\sum_{\substack{\chi \Mod{q} \\ \chi \neq \chi_0 }} \Big | \sum_{\substack{n \in I}} A_n \chi(n) \Big (1 - f \Big (\frac{n}{x} \Big )\Big )\Big |^2 
$$
where $I = I_1 \cup I_2$ with $I_1 := [1, x\cdot (x/q)^{ - \varepsilon/4}]$ and $I_2 := [x - x\cdot (x/q)^{ - \varepsilon/4}, x]$.

For $i=1,2$, define
$$
B_i(\chi;t) = \sum_{\substack{n \in I_i \\  n < t}} A_n \chi(n).
$$

By partial summation,
\begin{align*}
\sum_{n \in I_2} A_n \chi(n) \Big (1 - f \Big ( \frac{n}{x} \Big ) \Big ) & = \int_{I_2} \Big ( 1 - f \Big ( \frac{t}{x} \Big )\Big )  d B_2(\chi; t) 
\\ & = \frac{1}{x} \int_{I_2} f' \Big ( \frac{t}{x} \Big ) B_2(\chi; t) dt + B_2(\chi; x).
\end{align*}
Hence, 
\begin{align}
\nonumber &\frac{1}{\varphi(q)} \sum_{\chi \neq \chi_0} \Big | \sum_{n \in I_2} A_n \chi(n) \Big (1 - f \Big ( \frac{n}{x} \Big ) \Big ) \Big |^2 \\
\nonumber & \ll \frac{1}{\varphi(q)} \sum_{\chi \neq \chi_0} \Big | \frac{1}{x} \int_{I_2} f' \Big ( \frac{t}{x} \Big ) B_2(\chi; t) dt \Big |^2 + \frac{1}{\varphi(q)} \sum_{\chi \neq \chi_0} \Big|B_2(\chi; x)|^2 \\ \label{eq:toboudd2} & \leq (x/q)^{\varepsilon / 4} \cdot \frac{1}{\varphi(q)} \sum_{\chi \neq \chi_{0}} \frac{1}{x} \int_{I_2} |B_2(\chi; t)|^2 dt  + \frac{1}{\varphi(q)} \sum_{\chi \neq \chi_0} \Big|B_2(\chi; x)|^2.
\end{align}
Now by Proposition \ref{prop:smoothing} we have, for $t \in I_2$,
$$
\frac{1}{\varphi(q)} \sum_{\chi \neq \chi_{0}} |B_{2}(\chi; t)|^2 \ll (\log x)^6 \cdot \Big ( x^{-\varepsilon} \sqrt{q x} + \sqrt{(t - (x - x\cdot (x/q)^{ - \varepsilon/4})) \cdot q} \Big ). 
$$
Therefore \eqref{eq:toboudd2} is
\begin{multline*}
\ll (\log x)^{6} \cdot (x/q)^{\varepsilon / 4} \cdot \frac{1}{x} \cdot \Big ( x\cdot (x/q)^{ - \varepsilon/4} \cdot x^{-\varepsilon} \sqrt{q x} + x^{3/2} (x/q)^{- 3\varepsilon / 8} \sqrt{q} \Big )\\ + (\log x)^6 (x/q)^{-\varepsilon/8} \sqrt{qx} \ll (\log x)^6 (x/q)^{- \varepsilon / 8} \cdot \sqrt{q x}. 
\end{multline*}

A similar argument shows that 
$$
\frac{1}{\varphi(q)} \sum_{\chi \neq \chi_0} \Big | \sum_{n \in I_1} A_n \chi(n) \Big (1 - f \Big ( \frac{n}{x} \Big ) \Big ) \Big |^2 \ll (\log x)^6 (x/q)^{- \varepsilon / 8} \cdot \sqrt{q x}. 
$$
as well. By $(a+b)^2 \ll |a|^2+|b|^2$ we conclude that
$$
S_2  \ll  (\log x)^6 (x/q)^{-\varepsilon / 8} \sqrt{qx}
$$
as needed. 
On the other hand we can compute $S_1$ by using Proposition \ref{prop:asympt} and this yields the claimed estimate.

\subsection{Proof of Proposition \ref{prop:smoothing}}

    By P\'olya's formula (see \cite[Lemma 1]{vaughan1977exponential}) for $I = [a,b]$ and any $\chi \neq \chi_0$ of modulus $q$,
    $$
    \sum_{n \in I / d^2} \chi(n) = \frac{\tau(\chi)}{2\pi i} \sum_{1 \leq |n| \leq q} \overline{\chi}(n) f_{I / d^2}(n) + O(\log q),
    $$
    where
    $$
    f_{I / d^2}(n) = \frac{1}{n} \cdot \Big ( e \Big ( \frac{n a}{d^2 q} \Big ) - e \Big ( \frac{n b}{d^2 q } \Big ) \Big )  \ll g_{I / d^2}(n) := \begin{cases}
      \frac{|I|}{d^2 q} & \text{ if } |n| \leq \frac{d^2 q}{|I|}, \\
      \frac{1}{n} & \text{ otherwise .}
    \end{cases}
    $$
    We split $d$ and $n$ into dyadic intervals and bound the left-hand side of~\eqref{eq:Istat} by
    \begin{equation}
    \label{eq:PolBound}
    (\log x)^2 \sup_{\substack{D \leq z^{1/2} \\ 1 \leq N \leq q}} \sum_{\chi \Mod q}  \Big | \sum_{d \sim D} \mu(d) \chi(d^2) \sum_{n \sim N} \overline{\chi}(n) f_{I / d^2} (n) \Big |^2 + O(z (\log q)^2). 
    \end{equation}
    The error term is clearly acceptable. We bound the main term of~\eqref{eq:PolBound} using a majorant principle --- by going through the first equality in~\eqref{eq:transWchi0} we can replace coefficients $\mu(d)$ and $f_{I/d^2}(n)$ by their majorants. Hence we get the bound
    $$
    \ll (\log x)^2 \sup_{\substack{D \leq z^{1/2} \\ 1 \leq N \leq q}} \sum_{\chi} \Big | \sum_{d} \chi^2(d) V \Big ( \frac{d}{D} \Big ) \cdot \sum_{n} \chi(n) V(n/N) g_{I / D^2} (N) \Big |^2 
    $$
    with $V$ a smooth function supported on $[1/2, 4]$.
    
    The contribution of the principal character and quadratic character is
    $
    \ll z (\log x)^4
    $
    which is acceptable.
    On the remaining non-principal and non-quadratic characters we apply 
    Cauchy-Schwarz giving the upper bound
    \begin{align}\label{eq:cauchy dyadic}
    \ll (\log x)^2 \sup_{\substack{D \leq z^{1/2} \\ 1 \leq N \leq q}} g_{I/D^2}(N)^2 \Big ( \sum_{\substack{\chi^2 \neq \chi_0}} \Big | & \sum_{n} \chi^2(n) V \Big ( \frac{n}{D}\Big )  \Big |^4 \Big )^{1/2}  \Big ( \sum_{\chi \neq \chi_0} \Big | \sum_{n} \chi(n) V \Big ( \frac{n}{N}\Big )  \Big |^4 \Big )^{1/2}.
  \end{align}
We claim that
\begin{equation}\label{eq:two fourth moments}
\sum_{\substack{\chi^2 \neq \chi_0}} \Big | \sum_{n} \chi^2(n) V \Big ( \frac{n}{D} \Big ) \Big |^4 \ll q D^2 \cdot (\log x)^4 \quad \text{ and }\sum_{\substack{\chi \neq \chi_0}} \Big | \sum_{n} \chi(n) V \Big ( \frac{n}{N} \Big ) \Big |^4 \ll q N^2 \cdot (\log x)^4 .
\end{equation}
We explain the second bound in \eqref{eq:two fourth moments}; the first bound is similar. Let $\widetilde{V}$ be the Mellin transform of $V$. Using contour integration, the decay of $\widetilde{V}$, and H\"older, we get, for every $A \geq 1$,
\begin{align*}
	\sum_{\chi \neq \chi_0} \Big | \sum_{n} \chi(n) V \Big ( \frac{n}{N} \Big ) \Big |^4 &=\sum_{\chi \neq \chi_0} \Big |
	 \int_{\mathbb{R}} L(1/2+it,\chi)\widetilde{V}(1/2+it)  N^{1/2+it}\,dt \Big |^4\\
	 & \ll_A N^2 \sum_{\chi \neq \chi_0} \Big(
	 \int_{\mathbb{R}} \Big |L(1/2+it,\chi)\Big | (1+|t|)^{-A}  dt \Big )^4 \\
	 & \ll_A N^2 \sum_{\chi \neq \chi_0}
	 \int_{\mathbb{R}} \Big |L(1/2+it,\chi)\Big |^4 (1+|t|)^{-A}  \,dt.
\end{align*}
A dyadic decomposition of the integration range and the fourth moment bound for Dirichlet $L$-functions (Lemma~\ref{le:hybridFourth}) yield the second part of \eqref{eq:two fourth moments}.

Using \eqref{eq:two fourth moments} in \eqref{eq:cauchy dyadic}, we obtain an upper bound
\[
\begin{split}
&\ll (\log x)^6 \sup_{\substack{D \leq z^{1/2} \\ 1 \leq N \leq q}} g_{I/D^2}(N)^2 q DN \\
& \ll (\log x)^6 \sup_{\substack{D \leq \sqrt{|I|/q} \\ 1 \leq N \leq q}} g_{I/D^2}(N)^2 q DN + (\log x)^6 \sup_{\substack{\sqrt{|I|/q} < D \leq z^{1/2} \\ 1 \leq N \leq q}} g_{I/D^2}(N)^2 q DN .
\end{split}
\]
Recalling the definition of $g_{I/D^2}(N)$, we see that on the last line, the first $N$-supremum is attained for $N=1$ and the second $N$-supremum is attained for $N = D^2q/|I|$, and we get the bound
\[
\ll (\log x)^6 \sup_{\substack{D \leq \sqrt{|I|/q}}} q D + (\log x)^6 \sup_{\substack{\sqrt{|I|/q} < D \leq z^{1/2}}} |I|/D \ll (\log x)^6 \sqrt{|I| q}
\]
and the claim follows. 

\subsection{Proof of Proposition \ref{prop:asympt}}

    We apply Poisson summation (see e.g.~\cite[formula (4.26)]{iwaniec2004analytic}) in the sum over $n$, getting
    $$
    \sum_{n} \chi(n) f \Big ( \frac{n d^2}{x} \Big ) = \tau(\chi) \cdot \frac{x }{q d^2} \sum_{\ell} \overline{\chi}(\ell) \hat{f} \Big ( \frac{x \ell}{d^2 q} \Big ).
    $$
    Therefore we have to asymptotically estimate 
    \begin{align} \label{eq:first}
     \frac{q}{\varphi(q)} & \cdot \frac{x^2}{q^2} \sum_{\chi \neq \chi_{0}} \Big | \sum_{\substack{d^2 \leq z \\ \ell \in \mathbb{Z}}} \frac{\mu(d)}{d^2} \chi(d^2) \overline{\chi}(\ell) \hat{f} \Big ( \frac{x \ell}{d^2 q} \Big ) \Big |^2  = \\   
  \label{eq:second} & \frac{x^2}{q}  \sum_{\substack{n_1, n_2 \in \mathbb{Z} \\ d_1^2, d_2^2 \leq z \\ d_1^2 n_1\, \equiv\, d_2^2 n_2  \Mod{q} \\ (d_1 d_2 n_1 n_2 , q) = 1}} \frac{\mu(d_1)}{d_1^2} \frac{\mu(d_2)}{d_2^2} \cdot \hat{f} \Big ( \frac{x n_2}{d_1^2 q} \Big ) \overline{\hat{f} \Big ( \frac{x n_1}{d_2^2 q} \Big )} + O ( z x^{2\varepsilon/3} ), 
    \end{align}
    and where $O(z x^{2\varepsilon/3})$ comes from the principal character and from replacing $\varphi(q)$ by $q$. We note that since $z \leq x^{-\varepsilon} \sqrt{qx}$ this contribution is acceptable. 
    Notice that we can add and remove the restrictions
    $d_1, d_2 > x^{1/2 - \varepsilon/6} / \sqrt{q}$ and $|n_1| , |n_2| \leq x^{\varepsilon/3} \cdot z q / x$
    at will because they cost us a negligible error term that is $\ll_{A} x^{-A}$ for any given $A > 0$. 
    Moreover note that $n_1$ and $n_2$ now traverse all of $\mathbb{Z}$. 
    
   We now separate the set of tuples $(n_1, n_2)$ into
  $$
  \mathcal{M} := \{ (k_1^2 m, k_2^2 m) : m \in \mathbb{Z} \text{ squarefree}, k_1, k_2 \in \mathbb{N} \}
  $$
  and the complement. The $(n_1, n_2) \in \mathcal{M}$ contribute to a main term that is relatively easy to compute. On the other hand we will bound the contribution of $(n_1, n_2) \not \in \mathcal{M}$.

  \subsubsection{The main term $(n_1, n_2) \in \mathcal{M}$}
  The conditions $d_1^2 n_1 \equiv d_2^2 n_2 \Mod {q}$ and $(n_1 n_2 , q) = $  in the sum in \eqref{eq:second} imply that if $(n_1, n_2) \in \mathcal{M}$ then $d_1^2 k_1^2 \,\equiv\, d_2^2 k_2^2 \Mod{q}$ and therefore $d_1 k_1 \,\equiv\, \pm d_2 k_2 \Mod{q}$. This implies that $d_1 k_1 = d_2 k_2$ since $d_j k_j \leq \sqrt{z} \cdot \sqrt{x^{\varepsilon/3} z q / x} = x^{\varepsilon/6} z \cdot \sqrt{q / x}$ and this is $\leq q/3$ because $z \leq x^{-\varepsilon} \sqrt{qx}$. We conclude that the contribution of $(n_1, n_2) \in \mathcal{M}$ is given by
\begin{equation}\label{eq:prop7_cont_from_M}
\frac{x^2}{q} \sum_{k_1, k_2} \sum_{\substack{\substack{d_1 k_1 = d_2 k_2 \\ d_1^2, d_2^2 \leq z \\ (d_1 d_2 k_1 k_2, q) = 1}}} \frac{\mu(d_1) \mu(d_2)}{d_1^2 \cdot d_2^2} \sum_{(m,q)=1} \mu^2(m) \hat{f} \Big ( \frac{x k_2^2 m}{d_1^2 q} \Big ) \overline{\hat{f} \Big ( \frac{x k_1^2 m}{d_2^2 q} \Big )}.
\end{equation}
We now parametrize the equation $d_1 k_1 = d_2 k_2$ by dividing by $(d_1, d_2)$ on both sides so that
$$
k_1 = \frac{d_2 \ell}{(d_1, d_2)} \text{ and } k_2 = \frac{d_1 \ell}{(d_1, d_2)} \quad \text{with $\ell \in \mathbb{N}$}. 
$$
Plugging this and noticing that each non-negative integer can be written uniquely as $\ell^2 m$ with $m$ squarefree, we can re-write \eqref{eq:prop7_cont_from_M} as
$$
\frac{2 x^2}{q} \sum_{\substack{d_1^2, d_2^2 \leq z \\ (d_1 d_2, q) = 1}} \frac{\mu(d_1) \mu(d_2)}{d_1^2 \cdot d_2^2} \sum_{\substack{\ell \geq 1 \\ (\ell,q) = 1}} \Big | \hat{f} \Big ( \frac{x \ell}{q (d_1^2, d_2^2)} \Big )  \Big |^2. 
$$
Note that we can drop the condition $(d_1d_2,q) = 1$ as $q$ is prime and $d_1, d_2 < q$. Likewise since $\ell \geq q$ contribute $O_A(x^{-A})$, we can drop the condition $(\ell, q) = 1$ and apply Lemma \ref{le:asympt} with $W = \hat{f}$ and $H = x/q$ to see that the above is
$$
C \sqrt{qx} \cdot \pi \int_0^\infty |\hat{f}(y)|^2\sqrt{y}\,dy + O((x/q)^{-\varepsilon/8} \sqrt{xq}).
$$

Let $F(u) = \mathbf{1}_{[0,1]}(u)$. We have,
$$
\hat{f}(y) - \hat{F}(y) \ll \min\{(x/q)^{-\varepsilon/4}, |y|^{-1}\},
$$
with the bound $(x/q)^{-\varepsilon/4}$ for the difference between these two Fourier transforms following from the fact that $\|f - F\|_{L^1} \ll (x/q)^{-\varepsilon/4}$, and the bound $1/|y|$ following from the fact that the total variation of the function $f-F$ is bounded by an absolute constant. Likewise 
$$
\hat{f}(y)\ll (1+|y|)^{-1} \quad \text{and} \quad \hat{F}(y) \ll (1+|y|)^{-1}.
$$

Hence
\begin{multline*}
\int_0^\infty |\hat{f}(y)|^2 \sqrt{y}\, dy - \int_0^\infty |\hat{F}(y)|^2\sqrt{y}\,dy \\ \ll \int_0^\infty \min\{(x/q)^{-\varepsilon/4}, y^{-1}\} (1+y)^{-1} \sqrt{y}\, dy \ll (x/q)^{-\varepsilon/8}.
\end{multline*}
Putting these estimates together, and using the relation $|\hat{F}(\xi)| = |S(\xi)|$ and the integral identity \eqref{eq:sinc_integral}, we see that \eqref{eq:prop7_cont_from_M} is
$$
C\sqrt{qx} + O((x/q)^{-\varepsilon/8} \sqrt{xq}).
$$

\subsubsection{The off-diagonal $(n_1, n_2) \not \in \mathcal{M}$}

Let us focus on bounding the contribution of $(n_1, n_2) \not \in \mathcal{M}$. We recall that the contribution of $d_1 \leq x^{1/2 - \varepsilon/6}/q^{1/2}$ to \eqref{eq:first} is negligible and likewise the contribution of $d_2 \leq x^{1/2 - \varepsilon/6}/q^{1/2}$ is negligible. 
We now partition $d_1, d_2$ into intervals $[D_1, 2D_1]$ and $[D_2, 2 D_2]$ with $x^{1/2 - \varepsilon/6}/q^{1/2} \leq D_1, D_2 \leq \sqrt{z}$.
The total contribution of $(n_1, n_2) \not \in \mathcal{M}$ with $d_1 \in [D_1, 2 D_1]$ and $d_2 \in [D_2, 2 D_2]$ to~\eqref{eq:first} is bounded by \begin{equation} \label{eq:bounding}
\frac{x^2}{q} \cdot  \frac{1}{D_1^2 D_2^2} \sum_{\substack{(n_1, n_2) \not \in \mathcal{M}}} V \Big ( \frac{n_1}{N_1} \Big ) V \Big ( \frac{n_2}{N_2} \Big ) \sum_{\substack{d_1^2 n_1 \,\equiv\, d_2^2 n_2 \Mod{q} \\ (d_1 d_2 n_1 n_2, q) = 1}} V \Big ( \frac{d_1}{D_1} \Big ) V \Big ( \frac{d_2}{D_2} \Big )
\end{equation}
with $V$ a smooth non-negative compactly supported function such that $V(x) \geq 1$ for $x \in [-2, 2]$
and $D_1, D_2 > x^{1/2 - \varepsilon/6}/q^{1/2}$ and $N_1 \leq x^{\varepsilon/3} D_2^2 q / x$ and $N_2 \leq x^{\varepsilon/3} D_1^2 q / x$.

We now split into two cases according to the size of $D_1D_2$:

\subsubsection{Case $D_1 D_2 \geq x^{1 + 2\varepsilon} / q$}

In this case we do not use the condition $(n_1, n_2) \not \in \mathcal{M}$. Dropping this condition and using Dirichlet characters we can re-write \eqref{eq:bounding} as
\begin{align} \label{eq:equat2}
\frac{x^2}{q \varphi(q)} \frac{1}{D_1^2 D_2^2} \sum_{\chi^2 \neq \chi_0} \Big ( \sum_{n_1} & \chi(n_1) V \Big ( \frac{n_1}{N_1} \Big ) \Big ) \Big ( \sum_{n_2} \overline{\chi}(n_2) V \Big ( \frac{n_2}{N_2} \Big ) \Big ) \\ \nonumber & \times \Big ( \sum_{d_1} \chi^2 (d_1) V \Big ( \frac{d_1}{D_1} \Big ) \Big ) \Big ( \sum_{d_2} \overline{\chi}^2(d_2) V \Big ( \frac{d_2}{D_2} \Big ) \Big ) + O \Big ( \frac{x^2}{q^2} \cdot \frac{N_1 N_2}{D_1 D_2} \Big ) 
\end{align}
and where the $O(\cdot)$ term corresponds to the contribution of the characters with $\chi^2 = \chi_0$. Note that this contribution is acceptable since
$$
\frac{x^2}{q^2} \cdot \frac{N_1 N_2}{D_1 D_2} \ll x^{2\varepsilon/3} D_1 D_2 \ll x^{2\varepsilon/3} z \ll x^{-\varepsilon/3} \sqrt{qx}. 
$$
Now we express each of the sums in  \eqref{eq:equat2} using a contour integral, and using H\"older's inequality
this allows us to bound \eqref{eq:equat2} by
$$
\frac{x^2}{q^2} \cdot \frac{\sqrt{N_1 N_2 D_1 D_2}}{D_1^2 D_2^2} \sum_{\chi} \int_{|u| \leq x^{\varepsilon/3}}  |L(\tfrac 12 + i u, \chi)|^4 du + x^{-\varepsilon/3} \sqrt{qx}.
$$
By the fourth moment bound (Lemma~\ref{le:hybridFourth}) the first term is
$$
\ll \frac{x^2}{q^2} \cdot \frac{\sqrt{N_1 N_2 D_1 D_2}}{D_1^2 D_2^2} q x^{\varepsilon/2} \ll x^{5\varepsilon/6} \frac{x}{\sqrt{D_1 D_2}} \ll x^{-\varepsilon/6} \sqrt{qx} 
$$
since $D_1 D_2 \geq x^{1 + 2\varepsilon} / q$. 

\subsubsection{Case $D_1 D_2 < x^{1 + 2\varepsilon} / q$}

In this case we notice that since $D_1, D_2 > x^{1/2 - \varepsilon/6} / \sqrt{q}$ we have $D_1, D_2 \leq (x / q)^{1/2} x^{3\varepsilon}$ and in particular $N_1, N_2 \ll x^{7 \varepsilon}$. We notice that if $(n_1, n_2) \not \in \mathcal{M}$ and $n_1 d_1^2 \,\equiv\, n_2 d_2^2 \Mod{q}$ then $n_1 d_1^2 = n_2 d_2^2 + q \ell$ with $0 < |\ell| \ll x^{1+13\varepsilon}/q^2$.
We now fix $n_1, n_2, \ell$ --- there are $\ll x^{1 + 27\varepsilon} / q^2$ possible choices.
We shall show that the number of solutions in $|d_1|, |d_2| \ll (x / q)^{1/2} x^{3 \varepsilon}$ to $n_1 d_1^2 - n_2 d_2^2 = q \ell$ is bounded by
$\ll x^{9\varepsilon}$ which will be sufficient.

First of all note that we can assume that $(n_1, n_2, q \ell) = 1$. Indeed, $q$ cannot divide $n_1 n_2$ as $n_1 n_2 =o(q)$, and so letting $g=(n_1,n_2, q\ell)$ we have $g \mid \ell$ and the problem reduces to one where $(n_1,n_2,\ell)$ is replaced with $(n_1',n_2',\ell') = (n_1,n_2,\ell)/g$ and now $(n_1',n_2',q\ell')=1$.

Notice that $f(x_1, y_1) = n_1 x_1^2 - n_2 y_1^2$ is a primitive binary quadratic form with discriminant $d = 4 n_1 n_2 > 0$. Denote by $\varepsilon_{n_1 n_2}$ the real number $x_0/2 + y_0 \sqrt{n_1 n_2}$ where $(x_0, y_0)$
is the solution in positive integers to the equation $x_0^2 - 4 n_1 n_2 y_0^2 = 4$ for which $x_0+y_0\sqrt{d}$ is least. Note that $\varepsilon_{n_1 n_2} \geq 3/2$.

Let $(x_1,y_1)$ be a solution to $f(x_1,y_1) = q \ell$ with $x_1 , y_1 \ll (x / q)^{1/2} x^{3\varepsilon}$. We notice that in this situation
$$
(x_1,y_1) \in \bigcup_{\substack{1 \leq m \leq \log x}} T_m^{+} \cup T_m^{-}
$$
where
$$
T_{m}^{+} = \Big \{ (x,y) \in \mathbb{Z}^2 : f(x,y) = q \ell \text{ and } \sqrt{n_1} x > \sqrt{n_2} y \text{ and } \varepsilon_{n_1 n_2}^{2m - 2} \leq \Big | \frac{\sqrt{n_1} x + \sqrt{n_2} y}{\sqrt{n_1} x - \sqrt{n_2} y} \Big | < \varepsilon_{n_1 n_2}^{2m} \Big \}
$$
and
\[
\begin{split}
T_{m}^{-} &= \Big \{ (x,y) \in \mathbb{Z}^2 : f(x,y) = q \ell \text{ and } \sqrt{n_1} x < \sqrt{n_2} y \text{ and } \varepsilon_{n_1 n_2}^{2m - 2} \leq \Big | \frac{\sqrt{n_1} x + \sqrt{n_2} y}{\sqrt{n_1} x - \sqrt{n_2} y} \Big | < \varepsilon_{n_1 n_2}^{2m} \Big \} \\
& = \{(x, y) \in \mathbb{Z}^2 \colon (-x, -y) \in T_m^+\}.
\end{split}
\]
The reason for this is that $\sqrt{n_1} x_1 + \sqrt{n_2} y_1 \ll x^{7\varepsilon} (x/q)^{1/2}$ and
$$
| \sqrt{n_1} x_1 - \sqrt{n_2} y_1 | = \frac{q \ell}{\sqrt{n_1} x_1 + \sqrt{n_2} y_1} \gg \frac{q^{3/2}}{x^{1/2+7\varepsilon}} \gg 1. 
$$
Moreover by Lemma 13 of \cite{Williams1} we have $\# T_{m}^{+} = \# T_{1}^{+}$ for all $m \geq 1$, and trivially $\# T_{m}^{-} = \# T_{m}^{+}$ for all $m \geq 1$. 

The solutions belonging to $T_{1}^{+}$ are \emph{primary} for the quadratic form $n_1 x_1^2 - n_2 x_2^2$ of discriminant
$4 n_1 n_2$ (see p. 101 of \cite{Williams2} for the definition of primary). By Theorem 4.1 of \cite{Williams2}
the number of $(x_1, y_1)$ for which there exists a quadratic form $g$ of discriminant
$4 n_1 n_2$ such that $g(x_1,y_1) = q \ell$ and such that $(x_1, y_1)$ is primary for $g$, is either $0$ or given by
$$
m \prod_{p \mid m} \left( 1-\frac{1}{p}\left( \frac{4n_1 n_2/m^2}{p} \right) \right) \cdot \sum_{k \mid \frac{q\ell}{m^2} } \left( \frac{d_0}{k}\right),
$$
for particular integers $m$ and $d_0$ with $m^2 \mid (q\ell,4n_1 n_2)$. Using the divisor bound $\# \{ k : k \mid n\} \ll_{\varepsilon} n^{\varepsilon/100}$, we find that this is 
$$
\ll (n_1 n_2)^{1/2 + \varepsilon/100} (q \ell)^{\varepsilon/100} \ll x^{8\varepsilon}.
$$
We conclude therefore that $\# T_{1}^{+} \ll x^{8\varepsilon}$ and therefore the number of solutions $(x_1,y_1)$ with $|x_1|, |y_1| \ll ( x/q)^{1/2} x^{3\varepsilon}$ to
the equation $f(x_1, y_1) = q \ell$ is bounded by $\ll \log x\cdot\# T_{1}^{+} \ll x^{9\varepsilon}$
as claimed. It follows therefore that the total number of solutions to
$n_1 d_1^2 - n_2 d_2^2 = q \ell$ with $n_i \sim N_i$, $d_i \sim D_i$ for $i = 1,2$
is $\ll x^{1 + 36\varepsilon} / q^2$. 

We conclude therefore that \eqref{eq:bounding} is
$$
\ll \frac{x^2}{q} \cdot \frac{1}{D_1^2 D_2^2} \cdot \frac{x^{1 + 36\varepsilon}}{q^2} \ll \frac{x^{1 + 40 \varepsilon}}{q} \ll x^{-\varepsilon} \sqrt{ qx}
$$
since $q > x^{1/3 + 30\varepsilon}$. 

    \section{The range $z \ge (x / q)^{4/3 + \varepsilon}$ in the $q$-aspect : Proof of Proposition \ref{prop:prop2q}}
\label{se:prop2q}
Splitting into dyadic segments and recalling~\eqref{eq:transWOchi0}, we can bound the left-hand side of~\eqref{eq:prop2qclaim} by a constant times
    $$
    \log x \sup_{\sqrt{z} \leq D \leq \sqrt{x}} \frac{1}{\varphi(q)^2} \sum_{\chi \neq \chi_0} \Big | \sum_{\substack{n d^2 \leq x \\ d \sim D}} \mu(d) \chi(n) \chi(d^2) \Big |^2.
    $$
    Expressing the condition $n d^2 \leq x$ using a contour integral (see \cite[Cor. 5.3]{montgomery2007}) the above is bounded by
    $$
    \ll \log x \sup_{\sqrt{z} \leq D \leq \sqrt{x}} \frac{1}{\varphi(q)^2} \sum_{\chi \neq \chi_0} \Big | \int_{|t| \leq x} L(\tfrac 12 + it, \chi) M(1 + 2 i t, \chi^2) \cdot \frac{x^{1/2 + it}}{1/2 + it} dt \Big |^2  + O((x/q)^{1/2-\varepsilon/8}),
    $$
(in fact a better error term can be obtained but we do not need to keep track of it) where
    $$
    M(1  + 2 i t, \chi^2) = \sum_{d \sim D} \frac{\mu(d) \chi^2(d)}{d^{1 + 2 it}}. 
    $$
    Applying Cauchy-Schwarz and splitting according to the values of $t$ we can bound the main term above as
    \begin{equation}
	\label{eq:dyadicLupperBound}
    \ll x (\log x)^3 \sup_{\substack{\sqrt{z} \leq D \leq \sqrt{x} \\ 1 \leq T \leq x}} \frac{1}{\varphi(q)^2} \sum_{\chi \neq \chi_0} \frac{1}{T} \int_{-T}^{T} |L(\tfrac 12 + it, \chi)|^2 \cdot |M(1 + 2it, \chi^2)|^2 dt.
    \end{equation}

Let us now prove the claim on the assumption of the Generalized Lindel\"of Hypothesis. Applying Generalized Lindel\"of and then the hybrid mean-value theorem (Lemma \ref{le:hmvt}) we have for any choice of $\delta > 0$,
\begin{equation}
\label{eq:qTintest}
\begin{split}
&\frac{x}{\varphi(q)^2} \sum_{\chi \neq \chi_0} \frac{1}{T} \int_{-T}^T |L(\tfrac{1}{2}+it,\chi)|^2 |M(1+2it,\chi^2)|^2\, dt \\
&\ll \frac{x (qT)^\delta}{q^2 T} \sum_{\chi \neq \chi_0} \int_{-T}^T |M(1+2it,\chi^2)|^2\,dt \\
&\ll \frac{x T^\delta q^{2\delta}}{q^2 T} (qT + D) \cdot \frac{1}{D} \ll T^\delta q^{2\delta} \left(\frac{x}{qD} +  \frac{x}{q^2 T}\right).
\end{split}
\end{equation}
Since $q, T \leq x \leq (x/q)^{O(1)}$, for sufficently small $\delta$ we have $T^\delta q^{2\delta} \leq (x/q)^{\varepsilon/100}$. Recalling also that $D \geq z^{1/2} \geq (x/q)^{(1+\varepsilon)/2}$ and $q \geq x^{1/3 + \varepsilon}$, we see that~\eqref{eq:qTintest} is
$$
\ll x^{\varepsilon/100} \biggl( \Bigl(\frac{x}{q}\Bigr)^{1/2-\varepsilon/2} + \frac{x}{q^{2}} \biggr) \ll (x/q)^{1/2-\varepsilon/3}.
$$
Applying this estimate to \eqref{eq:dyadicLupperBound} yields the claim.

  Let us now consider the unconditional part of the claim. 
    Let
    $$
    S_{T,q} (V) := \{ (t, \chi) : V \leq | M(1 + 2 it, \chi^2)| \leq 2 V \ , \ |t| \leq T \ , \ \chi \Mod{q} \}. 
    $$
    Note that for $D \geq \sqrt{z} \geq (x/q)^{1/2+\varepsilon}$, the values of $t \in [-T, T]$ for
    which $|M(1+2it, \chi^2)| \leq D^{-1/2 + \varepsilon/4}$ contribute to ~\eqref{eq:dyadicLupperBound} by Cauchy-Schwarz and the fourth moment bound (Lemma~\ref{le:hybridFourth})
    $O((\log x)^5 x D^{-1 + \varepsilon/2}/q) = O((x / q)^{1/2 - \varepsilon/2})$. Additionally, $|M(1+2it)| \le \sum_{d \sim D} 1/d \le 2$. Therefore it suffices to show that for each $\sqrt{x} \geq D \geq \sqrt{z}$, $V \in [D^{-1/2}, 1]$, and $T \in [1, x]$, we have
    \begin{equation}
    	\label{eq:required ap bnd}
     \frac{x V^2}{\varphi(q)} \sum_{\chi \neq \chi_0} \frac{1}{T} \int_{t \colon (t, \chi) \in S_{T, q}(V)} |L(\tfrac 12 + it, \chi)|^2 dt \ll (x/q)^{-\varepsilon/8} (\log x)^{-4}  \cdot \sqrt{q x}.
    \end{equation}
     By Lemma \ref{le:hybridLVT} we have,
     \begin{equation}
     \label{eq:STqbound}
     |S_{T, q}(V)| \ll (V^{-2} + q T \min \{ D^{-1} V^{-2}, D^{-2} V^{-6} \}) \cdot (\log x)^{18}.
     \end{equation}
     Here $|S_{T,q}(V)|$ is the measure of $S_{T,q}(V)$, where the set of $\chi \Mod{q}$ is endowed with the counting measure.
     Consider first the case when the first term dominates. Then, by Lemma \ref{le:hybrid}, we see that the left-hand side of \eqref{eq:required ap bnd} is
     \begin{equation*}
     \ll \frac{x V^2}{\varphi(q)} \cdot \frac{|S_{T, q}(V)|}{T} \cdot (q T)^{1/3 + \varepsilon/4} \ll \frac{x}{\varphi(q)} \cdot \frac{1}{T} \cdot (q T)^{1/3 + \varepsilon/3} \ll \frac{x}{q^{2/3 - \varepsilon/2}} \ll (x/q)^{-\varepsilon/8} (\log x)^{-4}  \cdot \sqrt{qx}
     \end{equation*}
     since $q > x^{3/7 + \varepsilon}$. Note that the factor $(\log x)^{18}$ in \eqref{eq:STqbound} was absorbed in the exponent of $qT$.

     Consider now the case that the second term dominates in~\eqref{eq:STqbound}. Then by Cauchy-Schwarz and the hybrid fourth moment estimate (Lemma~\ref{le:hybridFourth}),
     \begin{align*}
     \frac{x V^2}{\varphi(q)} & \sum_{\chi \neq \chi_0} \frac{1}{T} \int_{t \colon (t, \chi) \in S_{T, q}(V)} |L(\tfrac 12 + it, \chi)|^2 dt \\ & \ll \frac{x V^2}{T \varphi(q)} \cdot |S_{T, q}(V)|^{1/2} \cdot \Big ( \sum_{\chi \neq \chi_0} \int_{-T}^T |L(\tfrac 12 + it, \chi)|^4 dt \Big )^{1/2} \\
                                                                                                                                   & \ll (\log x)^{11} \cdot x V^2 \cdot \min \{ D^{-1} V^{-2}, D^{-2} V^{-6} \}^{1/2} \\
& \ll (\log x)^{11} \cdot x \min \{ D^{-1/2} V, D^{-1} V^{-1} \} \\                                                                                                                                   
       & \ll (\log x)^{11} \cdot x \cdot ( D^{-1/2} V )^{1/2} \cdot (D^{-1} V^{-1})^{1/2} \\ & \ll x (\log x)^{11} \cdot D^{-3 / 4} \ll  (x/q)^{-\varepsilon/8} (\log x)^{-3}  \sqrt{qx} 
     \end{align*}
     since $D \geq \sqrt{z} \geq (x / q)^{2/3 + \varepsilon/2}$. 

\section{Conditional estimates: Proof of Theorem \ref{thm:main_upper}}
\label{sec:GRH_bounds}

The proof of Theorem \ref{thm:main_upper} splits into two parts since two assertions are made.
\subsection{Proof that the Riemann Hypothesis implies \eqref{eq:main_upper}}

The proof follows the same ideas as the proof of Proposition \ref{pr:prop2}. The claim \eqref{eq:main_upper} is already proved for $H\leq X^{2/3-\varepsilon}$, so we may assume $H > X^{2/3-\varepsilon}$. We return to \eqref{eq:final} and consider first the case $D \geq H^{(1-\delta)/2}$. Note that the Riemann Hypothesis implies
\begin{equation}
\label{eq:mobiusRHbound}
M(1+2it) \ll_\delta D^{-1/2+\delta/2},
\end{equation}
for $|t| \leq X^2$. Now \eqref{eq:final}, Cauchy-Schwarz and the fourth moment bound for the Riemann zeta function (Lemma~\ref{le:ZetaFourth}) imply
$$
\frac{1}{X} \int_{X}^{2X} \Big | \sum_{\substack{x < n d^2 \leq x + H \\ d \sim D}} \mu(d) - H \sum_{\substack{d \sim D}} \frac{\mu(d)}{d^2} \Big |^2 dx \ll_\delta (\log X)^2 H/D^{1-\delta}.
$$
For $D \geq H^{(1-\delta)/2}$, the right-hand side is $\ll_{\delta} (\log X) H^{1/2+\delta - \delta^2/2}$.
Splitting dyadically for $D \in [H^{(1-\delta)/2}, X^{1/2}]$ and using the tail bound
$$
\sum_{d^2 > 2X} \frac{\mu(d)}{d^2} \ll_{\delta} \frac{1}{X^{3/4 - \delta/10}},
$$
valid under Riemann Hypothesis, 
we see that the Riemann Hypothesis implies
\begin{equation}
\label{eq:RH_upper_est}
\begin{split}
&\frac{1}{X} \int_{X}^{2X} \Big | \sum_{\substack{x < n d^2 \leq x + H \\ d^2 \geq H^{1-\delta} }} \mu(d) - H \sum_{\substack{d^2 \geq H^{1-\delta} }} \frac{\mu(d)}{d^2} \Big |^2 dx \\
& \ll (\log X)^2 \sup_{H^{(1-\delta)/2} \leq D \leq X^{1/2}} \frac{1}{X} \int_{X}^{2X} \Big | \sum_{\substack{x < n d^2 \leq x + H \\ d \sim D }} \mu(d) - H \sum_{\substack{d \sim D}} \frac{\mu(d)}{d^2}\Big|^2 dx \\
& \qquad \qquad \qquad \qquad \qquad + \frac{1}{X} \int_{X}^{2X} \Big |H \sum_{d^2 > 2X} \frac{\mu(d)}{d^2} \Big|^2 dx \ll_\delta H^{1/2+\delta}.
\end{split}
\end{equation}

On the other hand, estimating the $n$-sum on the left-hand side by $H/d^2 + O(1)$, we see that
\[
\frac{1}{X} \int_{X}^{2X} \Big | \sum_{\substack{x < n d^2 \leq x + H \\ d^2 \leq H^{1/2} }} \mu(d) - H \sum_{\substack{d^2 \leq H^{1/2} }} \frac{\mu(d)}{d^2} \Big |^2 dx \ll H^{1/2}.
\]

Hence the claim follows once we have shown that, for any $D \in [H^{1/4}, H^{(1-\delta)/2}]$, we have
\[
\frac{1}{X} \int_{X}^{2X} \Big | \sum_{\substack{x < n d^2 \leq x + H \\ d \sim D}} \mu(d) - H \sum_{\substack{d \sim D}} \frac{\mu(d)}{d^2} \Big |^2 dx \ll_{\delta} H^{1/2}.
\]
Notice that we can attach to the $n$ variable a dummy function $f(n D^2 / X)$ with $f$ a smooth function supported in $[1/20, 20]$ and such that $f(y) = 1$ for $y \in [1/10, 10]$. 

Similarly to the proof of Propositon~\ref{pr:prop2}, write
$$
A(x) := \sum_{\substack{n d^2 \leq x \\ d \sim D}} f \Big ( \frac{n}{X / D^2} \Big ) \mu(d) - x \sum_{d \sim D} \frac{\mu(d)}{d^2}.
$$
By contour integration we have,  for $e^y \in [X, 2X]$ and $w \leq 1/100$, 
\begin{equation}
\label{eq:Aey2}
A(e^{y + w}) - A(e^{y}) = \frac{1}{2\pi i} \int_{1/2-i\infty}^{1/2+i\infty} e^{y s} \cdot \frac{e^{w s} - 1}{s} N_1(s) M(2s) ds - e^{y} (e^{w} - 1) \sum_{d \sim D} \frac{\mu(d)}{d^2},
\end{equation}
where
$$
M(s) := \sum_{d \sim D} \frac{\mu(d)}{d^{s}} \quad \text{and} \quad N_1(s) := \sum_{m} \frac{1}{m^s} \cdot f \Big ( \frac{m}{X / D^2} \Big ).
$$

Write also
\[
N_2(s) := \int_{\mathbb{R}} \frac{1}{u^s} \cdot f \Big ( \frac{u}{X / D^2} \Big ) du 
\]
and note that, for $e^y \in [X, 2X]$, we have by contour integration
\begin{align*}
\frac{1}{2\pi i} \int_{1/2-i\infty}^{1/2+i\infty} e^{y s} \cdot \frac{e^{w s} - 1}{s} N_2(s) M(2s) ds & = \sum_{d \sim D} \mu(d) \int_{\mathbb{R}} f \Big ( \frac{u}{X / D^2} \Big ) 1_{e^{y} \leq ud^2 \leq e^{y + w}} du \\ & = e^y (e^{w} - 1) \sum_{d \sim D} \frac{\mu(d)}{d^2}. 
\end{align*}
Plugging this into~\eqref{eq:Aey2} and arguing as in the proof of Proposition~\ref{pr:prop2}, we see that, for some $w \asymp H/X$, we have
\begin{equation}
\label{eq:AN1-N2bound}
\begin{split}
&\frac{1}{X} \int_{X}^{2X} |A(x + H) - A(x)|^2 dx \\
&\ll X \int_{\mathbb{R}} \Big | \frac{e^{w (\tfrac 12 + it)} - 1}{\tfrac 12 + it} \Big |^2 \cdot \left|\left(N_1(\tfrac 12 + it)-N_2(\tfrac{1}{2}+it)\right) M(1 + 2it)\right|^2 dt.
\end{split}
\end{equation}
By Poisson summation
\[
\begin{split}
N_1(\tfrac 12 + it) &= \sum_{m} \frac{1}{m^{1/2+it}} \cdot f \Big ( \frac{m}{X / D^2} \Big ) = \sum_\ell \int_{-\infty}^\infty \frac{1}{u^{1/2+it}} \cdot f \Big ( \frac{u}{X / D^2} \Big ) e(\ell u) du \\
& = N_2(\tfrac{1}{2} + it) + \frac{X}{D^2} \sum_{\ell \neq 0} \int_{1/20}^{20} \left(\frac{D^2}{yX}\right)^{1/2+it} f( y ) e\left(\frac{\ell yX}{D^2}\right) dy. 
\end{split}
\]
By partial integration (taking antiderivatives of $e(\ell y X/D^2)$), this implies that, for $|t| < X/D^{2 + \delta/100}$, 
\[
|N_1(\tfrac 12 + it)-N_2(\tfrac{1}{2}+it)| \ll_{A} X^{-A}, 
\]
for any $A > 0$. Therefore the part of the integral~\eqref{eq:AN1-N2bound} with $|t| < X / D^{2 +\delta/100}$ is completely negligible. 

On the other hand the part with $|t| \geq X^{10}$ contributes only $O(1)$ to the left-hand side of~\eqref{eq:AN1-N2bound} by estimating $|N_j(1/2+it)|$ and $|M(1+it)|$ trivially.

Furthermore, assuming the Riemann Hypothesis, we have by contour integration, for $|t| \in [X/D^{2 + \delta/100}, X^{10}]$, 
\[
|N_1(\tfrac 12 + it)| \ll \sup_{\substack{|t|/2 \leq |u| \leq 2X^{10}}} |\zeta(\tfrac{1}{2}+iu)| \ll_{\delta} X^{\delta/100}
\]
and 
\[
|M(1+2it)| \ll_{\delta} D^{-1/2+\delta/100}.
\] 
Furthermore, by partial integration we have, for $|t| \in [X/D^{2 + \delta/100}, X^{10}]$, 
\[
|N_2(\tfrac 12 + it)| \ll_{\delta} X^{\delta/100}
\]
Hence the part with $|t| \in [X/D^{2 + \delta/100}, X^{10}]$ contributes to~\eqref{eq:AN1-N2bound}
\[
\ll_{\delta} X \int_{X/D^{2 + \delta/100} \leq |t| \leq X^{10}} \frac{1}{|t|^2} X^{\delta/50} D^{-1+\delta/50}\, dt  \ll_{\delta} D X^{\delta/10} \ll_{\delta} H^{1/2}
\]
since $D \leq H^{(1-\delta)/2}$ and $H \geq X^{1/2}$.

\subsection{Proof that \eqref{eq:main_upper} implies the Riemann Hypothesis}

Suppose that \eqref{eq:main_upper} holds for $H = X^{1 - \delta}$. 
Then, by Cauchy-Schwarz, 
$$
\int_{\mathbb{R}} \Phi \Big ( \frac{x}{X} \Big ) \Big ( \frac{1}{H} \sum_{x < m \leq x + H} \mu^2 (m) \Big ) dx = \frac{6 X\widehat{\Phi}(0)}{\pi^2} + O_{\delta}(X^{1/4 + 3 \delta}). 
$$
with $\Phi$ an arbitrary, but not identically zero smooth function compactly supported in $[1/2, 3]$ (one could even enforce that $\widehat{\Phi}(0) = 0$ to simplify the above expression but we didn't find any significant advantage in doing this). 
Therefore,
\begin{equation} \label{eq:beginn}
\frac{1}{2\pi i} \int_{2 - i \infty}^{2 + i \infty} \frac{\zeta(s)}{\zeta(2s)} X^s \cdot \Psi_{H/ X}(s) ds - \frac{6 X \widehat{\Phi}(0)}{\pi^2} = O_{\delta}(X^{1/4 + 3 \delta}),
\end{equation}
where uniformly in $1/100 < \Re s < 100$, for any given $A > 1$, 
\begin{equation}
\begin{aligned} \label{eq:repl}
  \Psi_{H/X}(s) & := \frac{1}{s} \cdot \frac{X}{H} \int_{\mathbb{R}} \Big ( \Phi \Big (x - \frac{H}{X} \Big ) - \Phi(x) \Big ) x^s dx\\ & = \frac{1}{s} \sum_{1 \leq j \leq A} \frac{(-1)^j}{j!} \cdot \Big ( \frac{H}{X} \Big )^{j - 1} \int_{\mathbb{R}} \Phi^{(j)}(x) x^{s} dx + O_{A} ( X^{-\delta A} )  \\
  & = - \frac{1}{s} \int_{\mathbb{R}} \Phi'(x) x^{s} dx + O_{A} \Big ( \frac{H}{X} \cdot (1 + |\Im s|)^{-A} + X^{-\delta A} \Big ).
\end{aligned}
\end{equation}
By integration by parts the main term is equal to $\widetilde{\Phi}(s)$ where $\widetilde{\Phi}(s)$ is the Mellin transform of $\Phi$. The reader may also verify that we have the exact relation $\Psi_{H/X}(1) = \widetilde{\Phi}(1)$.

Suppose that the Riemann Hypothesis fails. Then $\zeta(s) / \zeta(2s)$ has a pole in the strip $\tfrac 14 < \sigma < \tfrac 12$ (e.g. $s = \rho / 2$ with $\rho = \beta + i \gamma$ the zeros of $\zeta(s)$ with smallest $\gamma > 0$ among all zeros of $\zeta(s)$ with $\beta \in (\tfrac 12, 1)$).
Let $\Theta > \tfrac{1}{4}$ denote the supremum of the real part of poles of $\zeta(s) / \zeta(2s)$ lying in the strip $\tfrac 14 < \sigma < \tfrac 12$. 
Choose $\delta > 0$ to be sufficiently small so that $\tfrac{1}{4} + 3 \delta \leq \Theta - \delta / 2$. 

Pick now $s_0$ a pole of $\zeta(s) / \zeta(2s)$ with $\Re s_0 \in (\Theta - \delta / 50, \Theta]$ and the smallest positive imaginary part. We can assume without loss of generality that $\Phi$ is chosen so that $\widetilde{\Phi}(s_{0}) \neq 0$. Indeed if it were the case that $\widetilde{\Phi}(s_0) = 0$ then pick a $c \in (0,1)$ such that $\widetilde{\Phi}(c + s_0) \neq 0$ and consider $x^{c} \Phi(x)$ in place of $\Phi(x)$.

We shall shift the contour in \eqref{eq:beginn} to the line $\sigma = \Theta + \delta / 8$. Note that for any fixed values of $X$ and $H$, from the definition \eqref{eq:repl} and integration by parts, we have $\Psi_{H/X}(s) \ll_A (1+|\Im s|)^{-A}$ uniformly for $1/100 < \Re s < 100$ for all $A \geq 1$. Furthermore for $\Re s \in [\sigma,2]$ and $s$ bounded away from $1$ we get the bound $\zeta(s)/\zeta(2s) \ll_\delta (1+|\Im s|)^C$, where $C$ is a constant which depends only on $\delta$. (This follows because we may bound $\zeta(s)$ using a convexity bound (see e.g.~\cite[Sec. 5.1]{Titchmarsh86}) and we may bound $1/\zeta(2s)$ using the estimate $\log \zeta(2s) \ll_\delta \log(|\Im(2s)|+2)$ in this region, which follows from a well-known estimate on the logarithmic derivative of the zeta function (e.g.~\cite[Thm 9.6 (A)]{Titchmarsh86}) and the fact that in this region $|2s-\rho'| \geq \delta/8$ whenever $\zeta(\rho') = 0$.) Thus
$$
\frac{1}{2\pi i} \int_{(2)} \frac{\zeta(s)}{\zeta(2s)} X^s \cdot \Psi_{H/ X}(s) ds = \frac{1}{2\pi i} \int_{(\sigma)} \frac{\zeta(s)}{\zeta(2s)} X^s \cdot \Psi_{H/ X}(s) ds + \frac{6}{\pi^2} X \Psi_{H/X}(1).
$$
Applying \eqref{eq:beginn} and \eqref{eq:repl} we find
$$
\frac{1}{2\pi i} \int_{(\sigma)} \frac{\zeta(s)}{\zeta(2s)} \cdot X^s \widetilde{\Phi}(s) ds = O_{\delta} ( X^{\Theta - \delta / 2} ) + O_{\delta}(X^{1/4 + 3 \delta}).
$$
By choice of $\delta > 0$ the error term is bounded by $O_{\delta}(X^{\Theta - \delta / 2})$. Therefore setting
$$
A(X) := \sum_{n} \mu^2(n) \Phi \Big ( \frac{n}{X} \Big ) - \frac{6}{\pi^2} \widehat{\Phi}(0) X\cdot \mathbf{1}_{[1,\infty)}(X),
$$
we have for $X \geq 1$,
$$
A(X) = \frac{1}{2\pi i} \int_{(\sigma)} \frac{\zeta(s)}{\zeta(2s)} \cdot X^s \widetilde{\Phi}(s) ds = O_{\delta}(X^{\Theta - \delta / 2}).
$$
Thus there exists a constant $c = c(\Theta, \delta)$ such that,
\begin{equation} \label{eq:assumed}
|A(x)| \leq c x^{\Theta - \delta / 50}
\end{equation}
for all $x \geq 0$ (note that for $0 < x < 1/100$ we have that $A(x)$ vanishes). 
Let us start by observing that for $\Re s > 1$,
\begin{equation}  
  \begin{aligned}\label{eq:analyticcontinuation}
    \int_{0}^{\infty} A(x) x^{-s - 1} dx & = \sum_{n \geq 1} \mu^2(n) \int_{0}^{\infty} \Phi \Big ( \frac{n}{x} \Big ) x^{-s - 1} dx - \frac{6 \widehat{\Phi}(0)}{\pi^2} \cdot \frac{1}{s - 1} \\
    & = \sum_{n \geq 1} \mu^2(n) \cdot n^{-s} \widetilde{\Phi}(s) - \frac{6 \widetilde{\Phi}(1)}{\pi^2} \cdot \frac{1}{s - 1} \\ & = \frac{\zeta(s)}{\zeta(2s)} \cdot \widetilde{\Phi}(s) - \frac{6 \widetilde{\Phi}(1)}{\pi^2} \cdot \frac{1}{s - 1}. 
\end{aligned}
\end{equation}
The function $\int_{0}^{\infty} A(x) x^{-s - 1} dx$ is analytic in $\Re s > \Theta - \delta / 50$ by \eqref{eq:assumed}. Therefore, by \eqref{eq:analyticcontinuation} and analytic continuation, 
\begin{equation} \label{eq:righthandside}
\frac{\zeta(s)}{\zeta(2s)} \cdot \widetilde{\Phi}(s) - \frac{6 \widetilde{\Phi}(1)}{\pi^2} \cdot \frac{1}{s - 1}
\end{equation}
is analytic in the region $\Re s > \Theta - \delta / 50$. This however contradicts that \eqref{eq:righthandside} has a pole at $s_0$ and $\Re s_0 \in (\Theta - \delta / 50, \Theta]$.
\bibliographystyle{alpha}
\bibliography{references}

\begin{thebibliography}{AKS04}

\bibitem[ACS17]{avdeeva2017ergodic}
M.~Avdeeva, F.~Cellarosi, and Y.~G Sinai.
\newblock Ergodic and statistical properties of {$\mathfrak{B}$}-free numbers.
\newblock {\em Theory of Probability \& Its Applications}, 61(4):569--589,
  2017.

\bibitem[AKS04]{primesP}
M.~Agrawal, N.~Kayal, and N.~Saxena.
\newblock P{RIMES} is in {P}.
\newblock {\em Ann. of Math. (2)}, 160(2):781--793, 2004.

\bibitem[Blo08]{blomer2008}
V.~Blomer.
\newblock The average value of divisor sums in arithmetic progressions.
\newblock {\em Q. J. Math.}, 59(3):275--286, 2008.

\bibitem[CE19]{carmon2019}
D.~Carmon and A.~Entin.
\newblock On square-free values of large polynomials over the rational function
  field.
\newblock {\em Mathematical Proceedings of the Cambridge Philosophical
  Society}, page 1–17, 2019.

\bibitem[Cro75]{Croft}
M.~J. Croft.
\newblock Square-free numbers in arithmetic progressions.
\newblock {\em Proc. London Math. Soc. (3)}, 30:143--159, 1975.

\bibitem[CS13]{cellarosi2013ergodic}
F.~Cellarosi and Y.~G. Sinai.
\newblock Ergodic properties of square-free numbers.
\newblock {\em Journal of the European Mathematical Society}, 15(4):1343--1374,
  2013.

\bibitem[FT92]{filaseta1992}
M.~Filaseta and O.~Trifonov.
\newblock On gaps between squarefree numbers. {II}.
\newblock {\em J. London Math. Soc. (2)}, 45(2):215--221, 1992.

\bibitem[GH91]{grimmett1991asymptotics}
G.~R. Grimmett and R.~R. Hall.
\newblock The asymptotics of random sieves.
\newblock {\em Mathematika}, 38(2):285--302, 1991.

\bibitem[GR14]{gradshteyn2014table}
I.~S. Gradshteyn and I.~M. Ryzhik.
\newblock {\em Table of integrals, series, and products}.
\newblock Academic press, 2014.

\bibitem[Gra98]{granville1998}
A.~Granville.
\newblock {$ABC$} allows us to count squarefrees.
\newblock {\em Internat. Math. Res. Notices}, (19):991--1009, 1998.

\bibitem[Hal82]{hall1982}
R.~R. Hall.
\newblock Squarefree numbers on short intervals.
\newblock {\em Mathematika}, 29(1):7--17, 1982.

\bibitem[Har07]{Harman}
G.~Harman.
\newblock {\em Prime-detecting sieves}, volume~33 of {\em London Mathematical
  Society Monographs Series}.
\newblock Princeton University Press, Princeton, NJ, 2007.

\bibitem[HB82]{HBV}
D.~R. Heath-Brown.
\newblock Prime numbers in short intervals and a generalized {V}aughan
  identity.
\newblock {\em Canadian J. Math.}, 34(6):1365--1377, 1982.

\bibitem[IK04]{iwaniec2004analytic}
H.~Iwaniec and E.~Kowalski.
\newblock {\em Analytic number theory}, volume~53.
\newblock American Mathematical Soc., 2004.

\bibitem[KR16]{keating2016}
J.~Keating and Z.~Rudnick.
\newblock Squarefree polynomials and {M}\"{o}bius values in short intervals and
  arithmetic progressions.
\newblock {\em Algebra Number Theory}, 10(2):375--420, 2016.

\bibitem[LB18]{leboudec2018}
P.~Le~Boudec.
\newblock On the distribution of squarefree integers in arithmetic
  progressions.
\newblock {\em Math. Z.}, 290(1-2):421--429, 2018.

\bibitem[Liu16]{Liu2016}
H-Q. Liu.
\newblock On the distribution of squarefree numbers.
\newblock {\em J. Number Theory}, 159:202--222, 2016.

\bibitem[Mon71]{Montgomery71}
H.~L. Montgomery.
\newblock {\em Topics in multiplicative number theory}.
\newblock Lecture Notes in Mathematics, Vol. 227. Springer-Verlag, Berlin-New
  York, 1971.

\bibitem[Mon01]{montgomery2001}
Hugh~L. Montgomery.
\newblock Harmonic analysis as found in analytic number theory.
\newblock In {\em Twentieth century harmonic analysis---a celebration ({I}l
  {C}iocco, 2000)}, volume~33 of {\em NATO Sci. Ser. II Math. Phys. Chem.},
  pages 271--293. Kluwer Acad. Publ., Dordrecht, 2001.

\bibitem[MV77]{vaughan1977exponential}
H.~L. Montgomery and R.~C. Vaughan.
\newblock Exponential sums with multiplicative coefficients.
\newblock {\em Invent. Math.}, 43(1):69--82, 1977.

\bibitem[MV07]{montgomery2007}
H.~L. Montgomery and R.~C. Vaughan.
\newblock {\em Multiplicative number theory. {I}. {C}lassical theory},
  volume~97 of {\em Cambridge Studies in Advanced Mathematics}.
\newblock Cambridge University Press, Cambridge, 2007.

\bibitem[MW02]{Williams1}
H.~Muzaffar and K.~S. Williams.
\newblock A restricted {E}pstein zeta function and the evaluation of some
  definite integrals.
\newblock {\em Acta Arith.}, 104(1):23--66, 2002.

\bibitem[Nun15]{nunes2015}
R.~M. Nunes.
\newblock Squarefree numbers in arithmetic progressions.
\newblock {\em J. Number Theory}, 153:1--36, 2015.

\bibitem[Nun17]{nunes2017}
R.~M. Nunes.
\newblock On the least squarefree number in an arithmetic progression.
\newblock {\em Mathematika}, 63(2):483--498, 2017.

\bibitem[Par19]{parry2019variance}
T.~Parry.
\newblock A variance for $ k $-free numbers in arithmetic progressions of given
  modulus.
\newblock {\em arXiv preprint arXiv:1912.04683}, 2019.

\bibitem[PY19]{petrow2019fourth}
I.~Petrow and M.~P. Young.
\newblock The fourth moment of {D}irichlet $ {L} $-functions along a coset and
  the {W}eyl bound.
\newblock {\em arXiv preprint arXiv:1908.10346}, 2019.

\bibitem[RS92]{rockett1992}
Andrew~M. Rockett and Peter Sz\"{u}sz.
\newblock {\em Continued fractions}.
\newblock World Scientific Publishing Co., Inc., River Edge, NJ, 1992.

\bibitem[She14]{shevchenko2014fractional}
G.~Shevchenko.
\newblock Fractional {B}rownian motion in a nutshell.
\newblock {\em arXiv preprint arXiv:1406.1956}, 2014.

\bibitem[SV77]{SaffariVaughan77}
B.~Saffari and R.~C. Vaughan.
\newblock On the fractional parts of {$x/n$} and related sequences. {II}.
\newblock {\em Ann. Inst. Fourier (Grenoble)}, 27(2):v, 1--30, 1977.

\bibitem[SW06]{Williams2}
Z-H. Sun and K.~S. Williams.
\newblock On the number of representations of {$n$} by {$ax^2+bxy+cy^2$}.
\newblock {\em Acta Arith.}, 122(2):101--171, 2006.

\bibitem[Tit86]{Titchmarsh86}
E.~C. Titchmarsh.
\newblock {\em The theory of the {R}iemann zeta-function}.
\newblock The Clarendon Press, Oxford University Press, New York, second
  edition, 1986.
\newblock Edited and with a preface by D. R. Heath-Brown.

\bibitem[Tol06]{tolev2006}
D.~I. Tolev.
\newblock On the distribution of {$r$}-tuples of squarefree numbers in short
  intervals.
\newblock {\em Int. J. Number Theory}, 2(2):225--234, 2006.

\bibitem[Tsa85]{tsang1985}
K.~M. Tsang.
\newblock The distribution of {$r$}-tuples of squarefree numbers.
\newblock {\em Mathematika}, 32(2):265--275 (1986), 1985.

\bibitem[Vau05]{vaughan2005variance}
R.~C. Vaughan.
\newblock A variance for k-free numbers in arithmetic progressions.
\newblock {\em Proceedings of the London Mathematical Society}, 91(3):573--597,
  2005.

\bibitem[War80]{warlimont1980squarefree}
R.~Warlimont.
\newblock Squarefree numbers in arithmetic progressions.
\newblock {\em Journal of the London Mathematical Society}, 2(1):21--24, 1980.

\end{thebibliography}

\Addresses

\end{document}